\documentclass{article}

\usepackage{hhline}
\usepackage{longtable}
\usepackage{easyReview}
\usepackage[margin=1.2in]{geometry}
\usepackage{mathtools}
\usepackage[affil-sl]{authblk}

\usepackage{cla}
\crefrangeformat{equation}{Equations~(#3#1#4)--(#5#2#6)}

\title{Representations of compatible Lie algebras}

\author[1]{Xabier García-Martínez \thanks{xabier.garcia@usc.gal}}
\author[1]{Manuel Ladra \thanks{manuel.ladra@usc.gal}}
\author[1]{Bernardo Leite da Cunha \thanks{bernardo.mariz@rai.usc.gal}}
\author[2]{Samuel A. Lopes \thanks{slopes@fc.up.pt}}
\affil[1]{Departamento de Matemáticas \& CITMAga, Universidade de Santiago de Compostela,
	15782, Spain.} 
\affil[2]{CMUP, Departamento de Matem\'atica, Faculdade de Ci\^encias, Universidade do Porto, Rua do Campo Alegre s/n, 4169--007 Porto, Portugal.}

\date{}
\begin{document}
	
	\maketitle
	
	\abstract{	We study compatible Lie algebras from algebraic and representation-theoretic points of view, obtaining counterexamples to some fundamental theorems from classical Lie algebra theory, namely the theorems of Lie, Weyl and Levi.
		
		We also classify the two-dimensional compatible Lie algebras up to isomorphism and explore their representation theory, presenting families of indecomposable non-semisimple representations, showing that the solvable two-dimensional compatible Lie algebras have wild representation type, and classifying all irreducible finite-dimensional line representations. Finally, we prove a Clebsch--Gordan decomposition for tensor products of finite-dimensional irreducible line representations.
}

\tableofcontents
\enlargethispage{2\baselineskip}
	
	\section{Introduction}

	Given a vector space $\g$ endowed with two Lie products $\sqb{-,-}$ and $\cb{-,-}$, we say that they are \emph{compatible} if each linear combination $\lambda \sqb{-,-}+\mu \cb{-,-}$ is still a Lie product, or equivalently, if the following identity  holds for any $x, y, z \in \g$
	\begin{align}\label{mixedjacobi:0}
		\begin{split}
			& \cb{\sqb{x,y},z}+\cb{\sqb{y,z},x}+\cb{\sqb{z,x},y} \\
			& + \sqb{\cb{x,y},z}+\sqb{\cb{y,z},x}+\sqb{\cb{z,x},y}=0.
		\end{split}
	\end{align}
	A vector space with two compatible Lie products is called a \emph{compatible Lie algebra}.	These structures arise in several areas of mathematics and mathematical physics, including the study of the classical Yang--Baxter equation \cite{CompatibleYangBaxter}, integrable equations of the principal chiral model type \cite{Golubchik2002}, elliptic theta functions \cite{CompatibleEliptic}, and loop algebras over Lie algebras \cite{Golubchik2005}.
	
	When setting $\lambda=1$ instead of considering general linear combinations $\lambda \sqb{-,-}+\mu \cb{-,-}$, the resulting product can be seen as an infinitesimal deformation of $\sqb{-,-}$. This follows by observing that \Cref{mixedjacobi:0} is simply the cocycle identity for the adjoint module.  Thus, the study of compatible Lie algebras is also connected with deformation theory.
	
	A related notion is that of bi-Hamiltonian structures, that is, pairs of compatible Poisson brackets defined on the same manifold. These play an essential role in the theory of integrable systems in mathematical physics. Operads of compatible Lie  algebras and bi-Hamiltonian algebras have been studied by Dotsenko and Khoroshkin in \cite{CharacterFormulasBiHamiltonian06}.


	A broader notion is that of compatible algebraic structures. Two algebraic structures of the same type (i.e.\ both associative algebras, Lie algebras, Leibniz algebras, etc.) $(V, \circ)$ and $(V, \ast)$ with the same underlying vector space are said to be compatible if $(V, \lambda \cdot \circ + \mu  \cdot  \ast)$ has the same type of algebraic structure as $(V, \circ)$ and $(V, \ast)$ for any scalars $\lambda, \mu$. Some non-Lie examples in the literature include compatible associative algebras in \cite{OdesskiiSokolovPairsCompatibleAssoc2006} and \cite{OdesskiiSokolovPairsCompatibleAssoc2008}, compatible associative bialgebras in \cite{CompatibleAlgebrasMarquez}, compatible Lie bialgebras in \cite{Wu2015} and compatible Leibniz algebras in \cite{CompLeibniz2025} and \cite{CohomDeformCompLeibninz2023}. General compatible structures have been studied from an operadic point of view in \cite{Strohmayer2008}.
	
	The classification of compatible Lie algebras is an increasingly active area of research. The algebraic classification of low-dimensional nilpotent compatible Lie algebras was obtained in \cite{NCLclassification25}, and the geometric classification in \cite{Abdurasulov2025}.
	A method of constructing solvable compatible Lie algebras from nilpotent ones was presented by Ouaridi, Navarro, Omirov, and Solijanova in \cite{Ouaridi2026}.

	This article is structured as follows. In \Cref{prelims_sect} we recall the definitions of compatible Lie algebras and their representations, and establish basic structural properties.
	In \Cref{counterexamples_sect}, we show that three classical results of Lie theory, namely Lie's Theorem, Weyl's complete reducibility Theorem and the Levi decomposition, fail in the compatible setting, highlighting the interest and more subtle nature of compatible Lie algebra representations.
	
	In \Cref{2d_sect}, we classify the two-dimensional compatible Lie algebras and explore their representation theory. For the solvable ones, we show that the representation type is wild. Focusing next on the unique simple compatible Lie algebra of dimension two, we obtain a family of finite-dimensional irreducible representations, which we name finite-dimensional irreducible line representations.
	
	We end with \Cref{tensor_sect}, wherein we study tensor products of finite-dimensional irreducible line representations, establishing a Clebsch–Gordan decomposition formula.
	
	\section{Preliminaries}\label{prelims_sect}
	
	\subsection{Compatible Lie algebras}
	
	In this section, we start by defining \emph{compatible Lie algebras} and recalling their basic properties. We let $\KK$ be an arbitrary field of characteristic different from $2$.
	
	
	\begin{prop}\label{compatibleequiv}
		Let $\underline{\g}=(\g,\sqb{-,-})$ and $\undertilde{\g}=(\g,\cb{-,-})$ be two Lie algebras over the same vector space $\g$. Then the following conditions are equivalent:
		\begin{enumerate}[(i)]
			\item\label{comp1} $(\g, \dbl{-,-})$ is a Lie algebra, where $\dbl{x,y}=\sqb{x,y}+\cb{x,y}$ for all $x,y \in \g$;
			\item\label{comp2} $(\g, \dbl{-,-}_{\lambda,\lambda'})$ is a Lie algebra for all $\lambda, \lambda' \in \KK$, where $\dbl{x,y}_{\lambda,\lambda'}=\lambda\sqb{x,y}+\lambda'\cb{x,y}$ for all $x,y \in \g$;
			\item\label{comp3} The following identity (named the \emph{mixed Jacobi identity}) holds for all $x,y,z \in \g$:
			\begin{align}\label{mixedjacobi}
				\begin{split}
					& \cb{\sqb{x,y},z}+\cb{\sqb{y,z},x}+\cb{\sqb{z,x},y} \\
					& + \sqb{\cb{x,y},z}+\sqb{\cb{y,z},x}+\sqb{\cb{z,x},y}=0.
				\end{split}
			\end{align}
		\end{enumerate}
	\end{prop}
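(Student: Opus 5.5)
The plan is to prove the cycle (ii)$\Rightarrow$(i)$\Rightarrow$(iii)$\Rightarrow$(ii). In each case the only nontrivial thing to check is the Jacobi identity. Bilinearity and skew-symmetry of $\dbl{-,-}_{\lambda,\lambda'}$ are immediate from those of $\sqb{-,-}$ and $\cb{-,-}$, since a linear combination of alternating bilinear maps is alternating bilinear.

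The implication (ii)$\Rightarrow$(i) is the special case $\lambda=\lambda'=1$, because $\dbl{-,-}=\dbl{-,-}_{1,1}$.

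For the remaining implications I will compute the Jacobiator of the general bracket once. Write $J_{\lambda,\lambda'}(x,y,z)=\sum_{\mathrm{cyc}}\dbl{\dbl{x,y}_{\lambda,\lambda'},z}_{\lambda,\lambda'}$. Expanding by bilinearity gives
\[
J_{\lambda,\lambda'}(x,y,z)=\lambda^2\sum_{\mathrm{cyc}}\sqb{\sqb{x,y},z}+\lambda'^2\sum_{\mathrm{cyc}}\cb{\cb{x,y},z}+\lambda\lambda' M(x,y,z),
\]
where $M(x,y,z)$ is the left-hand side of \Cref{mixedjacobi}. The first two sums vanish because $\underline{\g}$ and $\undertilde{\g}$ are Lie algebras. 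Hence
\[
J_{\lambda,\lambda'}=\lambda\lambda' M.
\]

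Both remaining implications follow from this identity. For (i)$\Rightarrow$(iii), take $\lambda=\lambda'=1$: the Jacobi identity for $\dbl{-,-}$ says $J_{1,1}=0$, which forces $M=0$, i.e.\ \Cref{mixedjacobi}. For (iii)$\Rightarrow$(ii), if $M=0$ then $J_{\lambda,\lambda'}=0$ for all $\lambda,\lambda'\in\KK$.

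The main obstacle is bookkeeping in the expansion. The cross terms from $\lambda\sqb{-,-}$ and $\lambda'\cb{-,-}$ are $\lambda\lambda'(\cb{\sqb{x,y},z}+\sqb{\cb{x,y},z})$ summed cyclically, and this is exactly $\lambda\lambda' M$. No assumption on the characteristic is needed for this argument.
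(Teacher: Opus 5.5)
Your proposal is correct. Expanding by bilinearity and using the Jacobi identities of $\underline{\g}$ and $\undertilde{\g}$ gives $J_{\lambda,\lambda'}=\lambda\lambda' M$, and this yields all three implications at once. The paper states the proposition without proof, treating it as the standard observation (its introduction describes the mixed Jacobi identity as the cocycle identity for the adjoint module), and your argument is exactly that standard computation.
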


With this, we have the definition of compatible Lie algebras.
	
	\begin{dfn}
		A \emph{compatible Lie algebra} is a triple $(\g,\sqb{-,-},\cb{-,-})$, where $\underline{\g}=(\g,\sqb{-,-})$ and $\undertilde{\g}=(\g,\cb{-,-})$ are Lie algebras satisfying any of the three equivalent conditions in \Cref{compatibleequiv}.
		
	\end{dfn}
	
	
	We refer the reader to \cite[Section 2]{NCLclassification25} for some basic concrete examples of compatible Lie algebras.


	\begin{dfn}
		A \emph{compatible Lie algebra homomorphism} between two compatible Lie algebras $(\g,\sqb{-,-}_\g,\cb{-,-}_\g)$ and $(\hh,\sqb{-,-}_\hh,\cb{-,-}_\hh)$ is a linear map $\varphi\colon\g \rightarrow \hh$ which is a Lie algebra homomorphism between $\underline{\g}$ and  $\underline{\hh}$, and  a Lie algebra homomorphism between $\undertilde{\g}$ and  $\undertilde{\hh}$.		
			As usual, an invertible homomorphism is an \emph{isomorphism}.
	\end{dfn}

	The classical definitions carry over to the compatible case:
	
	\begin{dfn}\hfill
		\begin{itemize}
			\item A \emph{subalgebra} of a compatible Lie algebra $\g$ is a vector subspace of $\g$ which is closed for both products.
			\item An \emph{ideal} $\mathfrak{i}$ of a compatible Lie algebra $\g$ is a vector subspace such that
			$
			\sqb{\mathfrak{i},\g}, \cb{\mathfrak{i},\g} \subseteq \mathfrak{i}. 
			$
			\item A compatible Lie algebra is said to be \emph{abelian} if both its products are trivial, or equivalently, if both of its component Lie algebras are abelian Lie algebras.
			\item A \emph{simple} compatible Lie algebra is one that is non-abelian and contains no non-trivial ideals.
			\item 	The \emph{centre} of a compatible Lie algebra $\g$, denoted by $Z(\g)$, is the ideal defined by
			\[Z(\g)=\cb{x \in \g \mid \sqb{x,\g}=0=\cb{x,\g} } = Z(\underline{\g})\cap Z(\undertilde{\g}) .\]
		\end{itemize}		
	\end{dfn}
	
	As expected, the kernel of a homomorphism is an ideal of the domain and the image of a homomorphism is a subalgebra of the codomain. The notion of quotient is well defined and the usual isomorphism theorems hold.


%
%
%
%
%

\subsection{Representations of compatible Lie algebras}

As in the classical case, there is a notion of representation for compatible Lie algebras. In this section, we give the basic definitions in the compatible context.

\begin{dfn}[\cite{Wu2015}]\label{representation}
	A \emph{representation} of a compatible Lie algebra $\g$ is a triple $(V,\rho,\mu)$, where $(V,\rho)$ is a representation of $\underline{\g}$, $(V,\mu)$ is a representation of $\undertilde\g$, and $(V,\rho+\mu)$ is a representation of~$(\g,\dbl{-,-})$.
\end{dfn}

\begin{rmk}
	The previous definition is equivalent to requiring that the following identities hold for all $x, y \in \g$.
	\begin{align}
		\rho(\sqb{x,y})&=\rho(x)\rho(y)-\rho(y)\rho(x); \label{rep:id1} \\
		\mu(\cb{x,y})&=\mu(x)\mu(y)-\mu(y)\mu(x);  \label{rep:id2} \\
		\rho(\cb{x,y})+\mu(\sqb{x,y})&= \rho(x)\mu(y)-\mu(y)\rho(x)+\mu(x)\rho(y)-\rho(y)\mu(x). \label{rep:id3}
	\end{align}
\end{rmk}

\begin{example}[Adjoint representation]
	Let $\underline\ad$ denote the adjoint representation of $(\g,\sqb{-,-})$ and $\undertilde\ad$ denote the adjoint representation of $(\g,\cb{-,-})$. Then the triple $(\g,\underline\ad,\undertilde\ad)$ is a representation of $\g$ called the \emph{adjoint representation}. The compatibility condition between $\underline\ad$ and $\undertilde\ad$ is precisely the mixed Jacobi identity.
\end{example}

The definitions of \emph{subrepresentation}, \emph{irreducible representation} and \emph{indecomposable representation} are the obvious generalisations of the classical counterparts.

%
%
%
%

\subsection{Solvable and semisimple compatible Lie algebras}

Note that the results of this section hold, 
 mutatis mutandis, for compatible algebras of any type for which a notion of commutator of subalgebras is available. In fact, the proofs are very similar to the classical case, so we do not include them in the text.

Recall the commutator of subalgebras 
\[
\dbl{\mathfrak{s},\mathfrak{t}} = \ab{\sqb{s,t},\cb{s,t} \mid s \in \mathfrak{s}, t \in \mathfrak{t}}.
\]

Using this commutator, we may define the following series
\[
\g^{(0)} \supseteq \g^{(1)} \supseteq \cdots \supseteq \g^{(i)} \supseteq \cdots ,
\]
where 
\[
\g^{(0)} := \g \text{ and } \g^{(i+1)} = \dbl{\g^{(i)},\g^{(i)}}.
\]
Each term of this series is an ideal of the previous one, 
and each quotient is abelian. A straightforward induction proof shows that 
\[
(\g^{(i)})^{(j)} = \g^{(i+j)}.
\]

\begin{dfn}
	A compatible Lie algebra is said to be \emph{solvable} if $\g^{(i)}=0$ for some $i \in \NN$.
\end{dfn}

\begin{lemma}\label{derived_properties}
	Let $\g$ be a compatible Lie algebra. Then
	\begin{enumerate}[(a)]
		\item if $\varphi\colon \g \rightarrow \hh$ is a surjective homomorphism, then $\varphi(\g^{(i)})=\hh^{(i)}$ for each $i \in \NN$;
		\item if $\hh$ is a subalgebra of $\g$, then $\hh^{(i)}\subseteq \g^{(i)}$ for each $i \in \NN$;
		\item if $\mathfrak{i}$ is an ideal of $\g$, then $\g$ is solvable if and only if both $\mathfrak{i}$ and $\g/\mathfrak{i}$ are solvable.
	\end{enumerate}
\end{lemma}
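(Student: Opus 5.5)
We argue by induction on $i$, mirroring the classical proofs and using only the definition of the commutator $\dbl{\mathfrak{s},\mathfrak{t}}$ as the span of all $\sqb{s,t}$ and $\cb{s,t}$. The key observation is that every step involves both products at once, and each product behaves exactly as in the classical case.

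For (a), the case $i=0$ is surjectivity. Assume $\varphi(\g^{(i)})=\hh^{(i)}$. Since $\varphi$ is linear and preserves both brackets, $\varphi(\sqb{s,t}_\g)=\sqb{\varphi(s),\varphi(t)}_\hh$ and $\varphi(\cb{s,t}_\g)=\cb{\varphi(s),\varphi(t)}_\hh$. Hence $\varphi$ maps the spanning set of $\g^{(i+1)}$ onto the spanning set of $\dbl{\varphi(\g^{(i)}),\varphi(\g^{(i)})}=\hh^{(i+1)}$. Taking spans gives $\varphi(\g^{(i+1)})=\hh^{(i+1)}$.

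For (b), we use the same induction with monotonicity: if $\mathfrak{s}\subseteq\mathfrak{s}'$, then $\dbl{\mathfrak{s},\mathfrak{s}}\subseteq\dbl{\mathfrak{s}',\mathfrak{s}'}$. So $\hh^{(i)}\subseteq\g^{(i)}$ implies $\hh^{(i+1)}\subseteq\g^{(i+1)}$. Here the brackets on $\hh$ are the restrictions of those on $\g$, since $\hh$ is a subalgebra for both products.

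For (c), suppose first that $\g$ is solvable. Then $\mathfrak{i}$, being an ideal and hence a subalgebra, is solvable by (b). The quotient $\g/\mathfrak{i}$ is solvable by (a) applied to the canonical projection $\pi\colon\g\to\g/\mathfrak{i}$, which is a surjective compatible homomorphism because the quotient is well defined. Conversely, suppose $(\g/\mathfrak{i})^{(n)}=0$ and $\mathfrak{i}^{(m)}=0$. By (a), $\pi(\g^{(n)})=0$, so $\g^{(n)}\subseteq\mathfrak{i}$. Applying (b) to the subalgebra $\g^{(n)}$ of $\mathfrak{i}$ and using the identity $(\g^{(n)})^{(m)}=\g^{(n+m)}$ stated above, we get $\g^{(n+m)}=(\g^{(n)})^{(m)}\subseteq\mathfrak{i}^{(m)}=0$.

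There is no genuine obstacle. The only point needing care is that every object used must be compatible, i.e.\ closed under both products. In particular, $\g^{(n)}$ must be a subalgebra of $\mathfrak{i}$ so that (b) applies. This holds because each $\g^{(k)}$ is an ideal of $\g^{(k-1)}$, and in particular is closed under both brackets.
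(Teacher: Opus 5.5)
Your proof is correct and is essentially the paper's approach. The paper omits the proof, remarking only that it is very similar to the classical case, and your inductions for (a) and (b) and the derived-series argument for (c) are exactly that classical argument run with the commutator $\dbl{-,-}$ spanned by both brackets.
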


{

With the aid of this lemma, we are now able to give the following definition and to prove the next corollary.

\begin{dfn}
	Let $\g$ be a finite-dimensional compatible Lie algebra. The set of all solvable ideals of $\g$ is nonempty and has a unique maximal element called its \emph{radical}, which we will denote by $\rad(\g)$. We say that a compatible Lie algebra $\g$ is \emph{semisimple} if $\rad(\g)=\cb{0}$.
\end{dfn}




\begin{cor}\hfill
	\begin{enumerate}[(a)]
		\item For any compatible Lie algebra $\g$, the compatible Lie algebra $\g/\rad(\g)$ is semisimple.
		\item A direct sum of simple compatible Lie algebras is semisimple.
	\end{enumerate}
\end{cor}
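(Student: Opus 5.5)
For part (a), I will argue exactly as in the classical case, relying on \Cref{derived_properties}. Write $\pi\colon \g \to \g/\rad(\g)$ for the quotient map and let $\bar{\mathfrak{j}}$ be a solvable ideal of $\g/\rad(\g)$. Its preimage $\mathfrak{j}=\pi^{-1}(\bar{\mathfrak{j}})$ is an ideal of $\g$ containing $\rad(\g)$. This holds because the preimage of an ideal under a compatible Lie algebra homomorphism is closed under brackets with $\g$ in both products: if $\pi(x)\in\bar{\mathfrak{j}}$, then $\pi(\sqb{x,g})=\sqb{\pi(x),\pi(g)}\in\bar{\mathfrak{j}}$, and the same holds for $\cb{-,-}$.

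By the third isomorphism theorem, $\mathfrak{j}/\rad(\g)\cong\bar{\mathfrak{j}}$, which is solvable. Since $\rad(\g)$ is also solvable, \Cref{derived_properties}(c), applied to the ideal $\rad(\g)$ of the compatible Lie algebra $\mathfrak{j}$, shows that $\mathfrak{j}$ is solvable. Maximality of the radical then forces $\mathfrak{j}\subseteq\rad(\g)$, so $\mathfrak{j}=\rad(\g)$ and $\bar{\mathfrak{j}}=0$. Hence $\rad(\g/\rad(\g))=0$.

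For part (b), let $\g=\bigoplus_{k=1}^n \mathfrak{s}_k$ with each $\mathfrak{s}_k$ simple, and let $\mathfrak{r}$ be a solvable ideal of $\g$. Let $p_k\colon\g\to\mathfrak{s}_k$ be the projections; these are surjective compatible Lie algebra homomorphisms. Each $p_k(\mathfrak{r})$ is an ideal of $\mathfrak{s}_k$, because $\sqb{p_k(r),s}=p_k(\sqb{r,s})$ for $s\in\mathfrak{s}_k$, and likewise for $\cb{-,-}$. It is also solvable, since $p_k(\mathfrak{r})^{(i)}=p_k(\mathfrak{r}^{(i)})$ by \Cref{derived_properties}(a) applied to $p_k|_{\mathfrak{r}}\colon\mathfrak{r}\to p_k(\mathfrak{r})$.

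By simplicity, $p_k(\mathfrak{r})$ is either $0$ or $\mathfrak{s}_k$. Suppose $p_k(\mathfrak{r})=\mathfrak{s}_k$. Then $\mathfrak{s}_k$ would be solvable. But $\mathfrak{s}_k^{(1)}=\dbl{\mathfrak{s}_k,\mathfrak{s}_k}$ is an ideal, and it is nonzero because $\mathfrak{s}_k$ is non-abelian. Simplicity then gives $\mathfrak{s}_k^{(1)}=\mathfrak{s}_k$, and inductively $\mathfrak{s}_k^{(i)}=\mathfrak{s}_k\neq0$ for all $i$, a contradiction. So $p_k(\mathfrak{r})=0$ for every $k$, hence $\mathfrak{r}=0$, and therefore $\rad(\g)=0$.

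The only step needing care is the claim that $\dbl{\mathfrak{s},\mathfrak{s}}$ is an ideal of $\mathfrak{s}$. This is not automatic, since it is spanned by brackets from two different products. I will check it by expanding $\sqb{\cb{a,b},c}$ with the mixed Jacobi identity \eqref{mixedjacobi}: the terms $\cb{\sqb{\cdot,\cdot},\cdot}$ and $\sqb{\cb{\cdot,\cdot},\cdot}$ combine to stay inside the span. More simply, $\dbl{\mathfrak{s},\mathfrak{s}}$ already contains all $\sqb{x,y}$ and $\cb{x,y}$ for $x,y\in\mathfrak{s}$, so $\sqb{\dbl{\mathfrak{s},\mathfrak{s}},\mathfrak{s}}$ and $\cb{\dbl{\mathfrak{s},\mathfrak{s}},\mathfrak{s}}$ lie in it trivially. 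So this obstacle is in fact harmless.
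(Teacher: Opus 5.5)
Your proof is correct and follows essentially the same approach as the paper, which omits the proof as ``very similar to the classical case''. Your argument (pulling back a solvable ideal through the quotient map and using \Cref{derived_properties}(c) for (a), and projecting a solvable ideal onto the simple summands and using \Cref{derived_properties}(a) for (b)) is exactly that classical argument. The mixed Jacobi detour in your last paragraph is unnecessary: as you observe, $\mathfrak{s}^{(1)}$ is trivially an ideal because it contains every bracket, in either product, of elements of $\mathfrak{s}$.
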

}

\section{Counterexamples to the theorems of Lie, Weyl and Levi
}\label{counterexamples_sect}

{Three} of the most well-known theorems from the theory of Lie algebras are {Lie's Theorem,} Weyl's Theorem and Levi's Theorem. 


{
\begin{theorem}[Lie's Theorem]
	Suppose $\KK$ is algebraically closed and of characteristic $0$. The finite-dimensional irreducible representations of a finite-dimensional solvable Lie algebra have dimension $1$.
\end{theorem}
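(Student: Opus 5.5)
The plan is the classical argument: show that a solvable Lie algebra acting on a nonzero finite-dimensional space has a common eigenvector, and then use irreducibility. Let $\mathfrak{g}$ be solvable and $(V,\rho)$ a finite-dimensional irreducible representation, with $V\neq 0$. We may replace $\mathfrak{g}$ by $\rho(\mathfrak{g})\subseteq \mathfrak{gl}(V)$, which is still solvable. It then suffices to prove the following claim.

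\emph{Claim.} Every solvable subalgebra $L\subseteq\mathfrak{gl}(V)$, with $0\neq V$ finite-dimensional, has a common eigenvector.

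Granting the claim, let $v$ be a common eigenvector. Then $\KK v$ is a nonzero subrepresentation, so irreducibility forces $V=\KK v$ and $\dim V=1$.

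We prove the claim by induction on $\dim L$; the case $\dim L=0$ is trivial.

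\begin{enumerate}
\item Since $L$ is solvable, $[L,L]\neq L$. Choose a codimension-one subspace $K\supseteq [L,L]$. It is automatically an ideal, and $L=K\oplus\KK z$ for some $z$.
\item By induction, $K$ has a common eigenvector, so $W=\{v\in V : xv=\lambda(x)v \text{ for all } x\in K\}$ is nonzero for some linear functional $\lambda$ on $K$.
\item Show that $W$ is $z$-stable. Since $\KK$ is algebraically closed, $z|_W$ then has an eigenvector, which is a common eigenvector for all of $L$.
\end{enumerate}

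The main obstacle is the invariance lemma in step 3: if $K$ is an ideal of $L$, then $\lambda([z,x])=0$ for all $x\in K$. To prove it, fix $0\neq w\in W$ and $x\in K$. Let $W_i=\operatorname{span}\{w,zw,\dots,z^{i-1}w\}$, and let $n$ be minimal with $W_n=W_{n+1}$. By induction on $i$ one shows that $x z^i w \equiv \lambda(x) z^i w \pmod{W_i}$; the inductive step uses $xz=zx+[x,z]$ together with $[x,z]\in K$. Hence, on $W_n$, each $x\in K$ acts by an upper triangular matrix with diagonal $\lambda(x)$, so $\operatorname{tr}_{W_n}(x)=n\lambda(x)$.

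Now $W_n$ is stable under both $z$ and $x$, so $\operatorname{tr}_{W_n}([z,x])=0$ as a trace of a commutator. On the other hand $[z,x]\in K$, so this trace also equals $n\lambda([z,x])$. Characteristic $0$ lets us cancel $n$, giving $\lambda([z,x])=0$.

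Finally, for $v\in W$ and $x\in K$,
\[
x(zv)=z(xv)-[z,x]v=\lambda(x)\,zv-\lambda([z,x])\,v=\lambda(x)\,zv,
\]
so $zv\in W$. This gives the $z$-stability of $W$ and completes the induction. Characteristic $0$ enters only in cancelling $n$, and algebraic closedness only in finding eigenvectors.
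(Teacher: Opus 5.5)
The paper gives no proof of this statement. It quotes Lie's Theorem as classical background for \Cref{Liecounter}, so there is no argument in the paper to compare yours with. Your proposal is the standard textbook proof: a common eigenvector via the invariance lemma, using the trace of a commutator on the $z$-cyclic subspace $W_n$. It is correct.

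Three small points deserve to be stated explicitly.
\begin{enumerate}
\item $K$ is solvable, being a subalgebra of $L$, so the induction hypothesis applies to it.
\item The congruence $xz^iw\equiv\lambda(x)z^iw \pmod{W_i}$ has to be proved for all $x\in K$ at once. The inductive step applies it to $[x,z]$, not to the fixed $x$ you chose at the start.
\item Minimality of $n$ gives $W_1\subsetneq W_2\subsetneq\cdots\subsetneq W_n$. Hence $\dim W_n=n\geq 1$ and $w,zw,\dots,z^{n-1}w$ is a basis of $W_n$, which justifies $\operatorname{tr}_{W_n}(x)=n\lambda(x)$.
\end{enumerate}

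It is also worth seeing why your argument cannot be carried over to the compatible setting of the paper. There one would need a common eigenvector for $\rho(\g)\cup\mu(\g)$. The identities \eqref{rep:id1}--\eqref{rep:id3} only control $\rho$, $\mu$ and $\rho+\mu$ separately. They impose nothing that makes the Lie subalgebra of $\mathfrak{gl}(V)$ generated by $\rho(\g)+\mu(\g)$ solvable. For instance, in \Cref{CL21reps_zeros} the operators $\rho(y)$ and $\mu(y)$ are completely arbitrary.
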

}

\begin{theorem}[Weyl's Theorem]
	Suppose $\KK$  has characteristic $0$. Any finite-dimensional representation of a finite-dimensional simple Lie algebra is completely reducible. 
\end{theorem}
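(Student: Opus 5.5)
The statement is the classical Weyl Theorem for ordinary Lie algebras. The paper recalls it only in order to contrast it with the compatible setting, so I would give the standard Casimir-element argument rather than anything new.

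The plan is first to reduce to the case of a codimension-one irreducible submodule, and then to split off that submodule using the Casimir operator.

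\textbf{Setup.} Let $\g$ be simple over a field of characteristic $0$, and let $(V,\rho)$ be a finite-dimensional representation. Extending scalars to the algebraic closure is harmless for complete reducibility, since a complement exists if and only if a certain linear system has a solution, and that is decided over $\KK$. We may also assume $\rho\neq 0$. Then $\ker\rho$ is a proper ideal, hence zero, so $\rho$ is faithful.

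\textbf{Casimir element.} The trace form $\beta(x,y)=\operatorname{tr}(\rho(x)\rho(y))$ is invariant. Its radical is an ideal, and by Cartan's criterion that radical is solvable. Since $\g$ is simple, the radical is zero and $\beta$ is nondegenerate. Take dual bases $\{x_i\}$ and $\{y_i\}$ of $\g$ with respect to $\beta$, and set $c_\rho=\sum_i\rho(x_i)\rho(y_i)$.

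Invariance of $\beta$ shows that $c_\rho$ commutes with $\rho(\g)$. Its trace is $\dim\g\neq 0$, because the characteristic is $0$. Every $W$-stable subspace is $c_\rho$-stable. Finally, $c_\rho$ acts by $0$ on any one-dimensional representation, because $[\g,\g]=\g$ forces one-dimensional representations to be trivial.

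\textbf{Codimension-one irreducible case.} Suppose $W\subseteq V$ is an irreducible submodule with $\dim V/W=1$. Then $c_\rho(V)\subseteq W$, since $c_\rho$ acts as $0$ on the trivial quotient $V/W$. On $W$, Schur's lemma says $c_\rho$ acts by a scalar.

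This scalar is nonzero. If $\g$ acts nontrivially on $W$, apply the same argument to $\rho|_W$: its Casimir has trace $\dim\g\neq 0$, and $c_\rho$ is a nonzero multiple of it. If $\g$ acts trivially on $W$, then every $\rho(x)$ is strictly upper triangular in a basis adapted to $W$, so $\rho(\g)$ is nilpotent, contradicting faithfulness. Therefore $\ker c_\rho$ is a one-dimensional submodule complementary to $W$.

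\textbf{General codimension-one case.} Now let $W$ be any submodule of codimension one. Induct on $\dim W$. If $W'\subsetneq W$ is a nonzero submodule, split $V/W'\supseteq W/W'$ to obtain $\tilde W\supseteq W'$ with $\tilde W/W'$ one-dimensional. Then split $W'\subseteq\tilde W$ by induction to get a line $L$ with $\tilde W=W'\oplus L$. One checks that $V=W\oplus L$.

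\textbf{General case.} Let $W\subseteq V$ be an arbitrary submodule. Consider the module $\operatorname{Hom}(V,W)$ and its subspaces
\[
\mathcal V=\{f: f|_W \text{ is scalar}\},\qquad \mathcal W=\{f: f|_W=0\}.
\]
Both are submodules, and $\mathcal W$ has codimension one in $\mathcal V$. By the codimension-one case there is an invariant $f$ with $f|_W=\mathrm{id}$. Invariance means $f$ is a $\g$-homomorphism, so $\ker f$ is a complementary submodule.

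\textbf{Main obstacle.} The delicate step is showing that $c_\rho$ acts invertibly on $W$ in the codimension-one irreducible case. It is the only place where characteristic $0$ is used, through the nondegeneracy of $\beta$ via Cartan's criterion and through $\dim\g\neq 0$ in $\KK$. It is also exactly the step that fails for compatible Lie algebras. There the representations $\rho$ and $\mu$ give two different trace forms, and no single operator built from them commutes with both actions, which is why the paper goes on to produce counterexamples.
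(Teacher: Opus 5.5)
The paper gives no proof of this statement. It recalls the classical theorem only to contrast it with \Cref{Weylcounter}. Your Casimir argument is the standard textbook proof, and its architecture is sound: the irreducible codimension-one case via the Casimir, induction for arbitrary codimension one, then the $\operatorname{Hom}(V,W)$ trick.

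There is, however, a genuine gap in your reduction to the algebraic closure. Extending scalars preserves the conclusion, as you say, but not the hypothesis: $\g\otimes\overline{\KK}$ need not be simple. For example, $\mathfrak{sl}_2(\CC)$, regarded as a six-dimensional real Lie algebra, is simple, yet its complexification is $\mathfrak{sl}_2(\CC)\oplus\mathfrak{sl}_2(\CC)$.

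After extending, you keep relying on simplicity. It enters through ``$\ker\rho$ is a proper ideal, hence zero'', applied to $V$. It also enters, implicitly, through the claim that every module is either trivial or faithful. You need that claim for each auxiliary module to which you apply the codimension-one case, namely $V/W'$, $\tilde W$ and $\mathcal V$, because your Casimir is only defined when the trace form is nondegenerate. Over $\overline{\KK}$ these auxiliary modules can be nonzero and non-faithful, with degenerate trace form. So, as written, your proof only covers absolutely simple $\g$, for instance the case where $\KK$ is algebraically closed.

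There are two easy repairs.
\begin{itemize}
\item Do not extend scalars at all. Schur's lemma in the form ``a nonzero endomorphism of an irreducible module is invertible'' holds over any field, and Cartan's criterion descends from $\overline{\KK}$ to $\KK$.
\item Keep the extension but prove the semisimple version, replacing $\g$ by the semisimple quotient $\g/\ker\rho$ whenever a module is not faithful.
\end{itemize}

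In either route, prove $c_\rho|_W\neq 0$ by a direct trace count. Since $c_\rho(V)\subseteq W$, you get $\operatorname{tr}_W(c_\rho|_W)=\operatorname{tr}_V(c_\rho)=\dim\g\neq 0$. Equivalently, because $\g$ kills $V/W$, the trace forms of $V$ and of $W$ coincide, so $c_\rho|_W$ is exactly the Casimir of $\rho|_W$.

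This removes your case split on whether $\g$ acts trivially on $W$. It also replaces your unexplained claim that $c_\rho|_W$ is a nonzero multiple of the Casimir of $\rho|_W$. If that claim was meant to rest on uniqueness of invariant forms up to scalar, it needs $\g$ simple \emph{and} $\KK$ algebraically closed, which is precisely the combination unavailable in general.
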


\begin{theorem}[Levi's Theorem]
	Suppose $\KK$  has characteristic $0$. A finite-dimensional Lie algebra $\g$ has a subalgebra isomorphic to $\g/\rad(\g)$ so that $\g$ admits a semidirect product decomposition $\g \simeq \mathfrak{s} \ltimes \rad(\g)$ where $\mathfrak{s}$ is a semisimple subalgebra of $\g$. Such a subalgebra $\mathfrak{s}$ is called a \emph{Levi subalgebra of $\g$}. 
\end{theorem}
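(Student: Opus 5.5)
The proof is the classical cohomological argument. I argue by induction on $\dim \rad(\g)$, write $\mathfrak{r}=\rad(\g)$, and use two standard inputs from classical Lie theory in characteristic $0$. The first is that $\g/\mathfrak{r}$ is semisimple, i.e.\ the Lie-algebra version of the corollary above. The second is Whitehead's second lemma: $H^2(\mathfrak{s},V)=0$ for every finite-dimensional module $V$ over a semisimple Lie algebra $\mathfrak{s}$. Whitehead's lemma itself comes from the Casimir element, using Cartan's criterion that the trace form of a faithful representation of $\mathfrak{s}$ is nondegenerate. The case $\mathfrak{r}=0$ is trivial, since then $\mathfrak{s}=\g$ works.

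\emph{Reduction to a minimal abelian ideal.} Suppose first that $\mathfrak{r}$ contains a nonzero ideal $\mathfrak{a}$ of $\g$ with $\mathfrak{a}\neq\mathfrak{r}$. Then $\g/\mathfrak{a}$ has radical $\mathfrak{r}/\mathfrak{a}$ of smaller dimension, because $\mathfrak{r}/\mathfrak{a}$ is solvable and the quotient of $\g/\mathfrak{a}$ by it is $\g/\mathfrak{r}$, which is semisimple. By induction there is a subalgebra $\mathfrak{g}_1/\mathfrak{a}\subseteq \g/\mathfrak{a}$ with $\mathfrak{g}_1/\mathfrak{a}\cong \g/\mathfrak{r}$. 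Then $\mathfrak{g}_1$ has radical $\mathfrak{a}$, which is strictly smaller than $\mathfrak{r}$. Applying induction again to $\mathfrak{g}_1$ gives a subalgebra $\mathfrak{s}\subseteq\mathfrak{g}_1$ with $\mathfrak{s}\cong \mathfrak{g}_1/\mathfrak{a}\cong\g/\mathfrak{r}$. Moreover $\mathfrak{s}\cap\mathfrak{r}=0$ because $\mathfrak{s}$ is semisimple, and a dimension count gives $\g=\mathfrak{s}\oplus\mathfrak{r}$.

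So we may assume $\mathfrak{r}$ is a minimal nonzero ideal of $\g$. Since $\mathfrak{r}$ is solvable, $[\mathfrak{r},\mathfrak{r}]\subsetneq\mathfrak{r}$ is an ideal of $\g$, so minimality forces $[\mathfrak{r},\mathfrak{r}]=0$.

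\emph{Abelian case.} Now $\mathfrak{r}$ is abelian, so the adjoint action descends to make $\mathfrak{r}$ a $\mathfrak{s}:=\g/\mathfrak{r}$-module. Choose a linear section $\sigma\colon \mathfrak{s}\to\g$ of the projection. Its failure to be a homomorphism is measured by
\[
\omega(x,y)=[\sigma x,\sigma y]-\sigma[x,y]\in\mathfrak{r}.
\]
The Jacobi identity in $\g$ shows that $\omega$ is a $2$-cocycle of $\mathfrak{s}$ with values in $\mathfrak{r}$. By Whitehead's second lemma, $\omega=d\beta$ for some linear map $\beta\colon\mathfrak{s}\to\mathfrak{r}$. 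Replacing $\sigma$ by $\sigma-\beta$, with the appropriate sign convention, gives a Lie algebra homomorphism splitting the projection. Its image is the desired Levi subalgebra $\mathfrak{s}$, and $\g\simeq\mathfrak{s}\ltimes\mathfrak{r}$.

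The main obstacle is Whitehead's second lemma. I would prove it by first reducing to the case where $V$ is irreducible, using the long exact sequence together with Weyl's theorem. If $V$ is a nontrivial irreducible module, the Casimir $c_V$ acts by a nonzero scalar and is null-homotopic on the cochain complex, which kills the cohomology. If $V=\KK$ is trivial, one uses $H^2(\mathfrak{s},\KK)=0$: a cocycle $\omega$ is turned into a derivation via the Killing form, and derivations of $\mathfrak{s}$ are inner, so $\omega$ is a coboundary.
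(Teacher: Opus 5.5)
The paper gives no proof of this statement. It quotes Levi's theorem as classical background, to be contrasted with \Cref{Levicounter}, so there is nothing in the paper to compare your argument against. Your proof is the standard cohomological one: induction on $\dim\rad(\g)$, reduction to the case where $\rad(\g)$ is a minimal and hence abelian ideal, then splitting the extension by Whitehead's second lemma. It is correct as outlined.

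Two small refinements.

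First, only characteristic $0$ is assumed, so the Casimir $c_V$ of a nontrivial irreducible module need not act by a scalar in $\KK$. What you need, and what holds, is that it acts invertibly. It commutes with the action, so it lies in $\End_{\mathfrak{s}}(V)$, which is a division algebra by Schur's lemma. It is nonzero because its trace is the dimension of the image of $\mathfrak{s}$ in $\mathfrak{gl}(V)$.

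Second, in your abelian case you do not need to reduce Whitehead's lemma to irreducible modules, nor to use Weyl's theorem. Because $\mathfrak{r}$ acts trivially on itself, its $\g/\mathfrak{r}$-submodules are exactly the ideals of $\g$ contained in $\mathfrak{r}$. So minimality already makes $\mathfrak{r}$ irreducible, and one-dimensional if the action is trivial.

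Your abelian step is also exactly where the paper's counterexample lives. The algebra in \Cref{Levicounter} is a one-dimensional abelian extension of the simple algebra $CL_{2,4}$ by $\CC z$. A complement to $\rad(\g)=\CC z$ that is closed under both brackets is the graph of a single linear map $\beta\colon CL_{2,4}\to\CC z$. This $\beta$ must satisfy both $\underline{\omega}=\underline{d}\beta$ and $\undertilde{\omega}=\undertilde{d}\beta$, where $\underline{d}\beta(a,b)=\rho(a)\beta(b)-\rho(b)\beta(a)-\beta(\sqb{a,b})$, and $\undertilde{d}$ is defined similarly with $\mu$ and $\cb{-,-}$.

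Take the module used there: $\rho(x)z=0$, $\rho(y)z=-z$, $\mu(x)z=z$, $\mu(y)z=0$. Write $\beta(x)=az$ and $\beta(y)=bz$, and recall $\sqb{x,y}=x$ and $\cb{x,y}=y$ in $CL_{2,4}$. Then
\begin{align*}
\underline{d}\beta(x,y)&=\rho(x)\beta(y)-\rho(y)\beta(x)-\beta(x)=0+az-az=0,\\
\undertilde{d}\beta(x,y)&=\mu(x)\beta(y)-\mu(y)\beta(x)-\beta(y)=bz-0-bz=0.
\end{align*}
So every compatible coboundary vanishes, and the cocycle $(\underline{\omega},\undertilde{\omega})=(-z,0)$ is a nonzero class. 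In other words, the compatible analogue of Whitehead's second lemma fails for $CL_{2,4}$. That is precisely the step of your proof that cannot be transferred to compatible Lie algebras.
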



In the case of compatible Lie algebras, these theorems fail to hold. To construct counterexamples, we {make use of} the following compatible Lie algebra{s} (see \Cref{2d_comp_class}).

\begin{dfn}\label{CL2def}
	{Let $CL_{2,1}$ be the two-dimensional compatible Lie algebra with basis ${x, y}$ and nonzero products between basis elements
	\[
	\sqb{x,y}=x.
	\]
}
	
	Let $CL_{2,4}$ be the two-dimensional compatible Lie algebra with basis ${x, y}$ and {nonzero} products given by
	\[
	\sqb{x,y}=x, \quad \cb{x,y}=y.
	\]
\end{dfn}

{The compatible Lie algebras $CL_{2,1}$ and $CL_{2,4}$ are solvable and simple, respectively, and they will provide a counterexample to Lie's Theorem and to Weyl's and Levi's Theorems, respectively.}

\subsection{The failure  of Lie's and Weyl's theorems}

\begin{prop}\label{Liecounter}
	The compatible Lie algebra $CL_{2,1}$ admits irreducible representations of each finite dimension greater than $1$. Therefore, Lie's Theorem fails to hold in the class of compatible Lie algebras.
\end{prop}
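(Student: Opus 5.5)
We need to construct, for each $n\ge 2$, an irreducible representation $(V,\rho,\mu)$ of $CL_{2,1}$. Here $\sqb{x,y}=x$ and $\cb{-,-}=0$.

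\textbf{Reducing the identities.} Identities \eqref{rep:id1}--\eqref{rep:id3} become
\[
\rho(x)=[\rho(x),\rho(y)],\qquad [\mu(x),\mu(y)]=0,\qquad \mu(x)=[\rho(x),\mu(y)]+[\mu(x),\rho(y)].
\]
The simplest choice is $\rho=0$. Then the first identity holds automatically. The remaining conditions are that $\mu(x)$ and $\mu(y)$ commute, and that $\mu(x)=0$ by the third identity. But then the representation is just a single endomorphism $\mu(y)$ over $\KK$. Its irreducible representations are irreducible $\KK[t]$-modules, which have dimension $1$ over an algebraically closed field. So $\rho=0$ only works over non-closed fields and is not the intended counterexample, since Lie's Theorem assumes $\KK$ algebraically closed.

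\textbf{A better ansatz: $\mu(x)=0$.} The conditions become
\[
\rho(x)=[\rho(x),\rho(y)],\qquad [\rho(x),\mu(y)]=0,
\]
with $\mu(y)$ otherwise arbitrary. Subspaces invariant under $\rho(x)$ and $\rho(y)$ together with the free operator $\mu(y)$ are harder to come by, so irreducibility can come from $\mu(y)$. I would take $V=\KK^n$ with basis $e_1,\dots,e_n$ and set:
\begin{itemize}
\item $\rho(x)=0$;
\item $\rho(y)=D$, a diagonal matrix with distinct entries $d_1,\dots,d_n$;
\item $\mu(x)=0$;
\item $\mu(y)=J$, the cyclic shift $e_i\mapsto e_{i+1}$, $e_n\mapsto e_1$, or alternatively the all-ones matrix.
\end{itemize}

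Checking the identities: \eqref{rep:id1} reads $0=0$. \eqref{rep:id2} holds because $\mu(x)=0$. In \eqref{rep:id3} for the pair $(x,y)$ we have $\rho(\cb{x,y})+\mu(\sqb{x,y})=0+\mu(x)=0$, and the right-hand side is $[\rho(x),\mu(y)]+[\mu(x),\rho(y)]=0$. The pairs $(x,x)$ and $(y,y)$ are trivial by antisymmetry.

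\textbf{Irreducibility.} A subspace $W$ invariant under $D$ is spanned by some subset of the $e_i$, since $D$ has distinct eigenvalues. Invariance under the cyclic shift $J$ then forces that subset to be empty or everything. Hence $V$ is irreducible of dimension $n$ for every $n\ge 2$, and this works over any field with at least $n$ distinct elements. Over finite fields, or for general choices, one can instead use a nilpotent Jordan block for $D$, which is a single-flag operator, paired with a $J$ having no common invariant subspace with it.

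The main obstacle is simply finding an ansatz in which the compatibility identity \eqref{rep:id3} does not force everything to collapse. Once both $\rho(x)$ and $\mu(x)$ are set to zero, the conditions decouple completely, and irreducibility reduces to the classical fact that a pair of generic matrices generates the full matrix algebra. Consequently $CL_{2,1}$, although solvable, has irreducible representations of every finite dimension greater than $1$, so Lie's Theorem fails for compatible Lie algebras.
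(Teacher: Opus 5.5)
Your proposal is correct and is essentially the paper's argument: the paper also sets $\rho(x)=\mu(x)=0$, via its lemma showing that $\rho(y)$ and $\mu(y)$ are then completely free, and it picks a pair with no common invariant subspace, namely the opposite nilpotent shifts $\rho(y)v_i=v_{i-1}$, $\mu(y)v_i=v_{i+1}$. The only difference is your choice of pair (a diagonal matrix with distinct eigenvalues plus the cyclic shift), which needs at least $n$ distinct scalars and is therefore harmless in the algebraically closed setting of Lie's Theorem, whereas the paper's pair works over every field, as your Jordan-block fallback would.
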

\begin{proof}
	Some families of such representations will be constructed in \Cref{repsCLothers_sect}.
\end{proof}

We note that we show a stronger result, that all solvable two-dimensional compatible Lie algebras have wild representation type.

\begin{prop}\label{Weylcounter}
	The compatible Lie algebra $CL_{2,4}$ admits non-simple and indecomposable representations of each finite dimension greater than $1$. Therefore, Weyl's Theorem fails to hold in the class of compatible Lie algebras.
\end{prop}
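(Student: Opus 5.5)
The plan is to exhibit, for every $n\geq 2$, an explicit $n$-dimensional representation of $CL_{2,4}$ that is indecomposable but has a proper nonzero subrepresentation. The key idea is to build $\rho$ and $\mu$ from a single representation of the two-dimensional nonabelian Lie algebra, so that \Cref{rep:id1,rep:id2,rep:id3} reduce to one commutator relation.

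First I will write out \Cref{rep:id1,rep:id2,rep:id3} for $CL_{2,4}$. Put $X=\rho(x)$, $Y=\rho(y)$, $A=\mu(x)$, $B=\mu(y)$. Since $\sqb{x,y}=x$ and $\cb{x,y}=y$, the conditions are:
\begin{itemize}
\item $[X,Y]=X$;
\item $[A,B]=B$;
\item $\rho(\cb{x,y})+\mu(\sqb{x,y})=Y+A$ must equal $[X,B]+[A,Y]$.
\end{itemize}
I will use the ansatz $A=-Y$ and $B=X$. Then the third condition reads $0=[X,X]+[-Y,Y]=0$. The second reads $[-Y,X]=[X,Y]=X=B$. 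So all three identities reduce to the single relation $[X,Y]=X$. Hence any pair of matrices with $[X,Y]=X$ yields a representation $(V,\rho,\mu)$ of $CL_{2,4}$ with $\rho(x)=X$, $\rho(y)=Y$, $\mu(x)=-Y$, $\mu(y)=X$.

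Next I will choose the matrices. On $V=\KK^n$ with basis $e_1,\dots,e_n$, let $X$ be the nilpotent Jordan block, $Xe_i=e_{i-1}$ with $e_0=0$. Let $Y=\mathrm{diag}(0,1,\dots,n-1)$. A direct check on basis vectors gives
\[
[X,Y]e_i=(d_i-d_{i-1})e_{i-1}=e_{i-1}=Xe_i ,
\]
where $d_i=i-1$ is the $i$-th diagonal entry of $Y$. So $[X,Y]=X$ holds over any field.

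Finally I will analyse the subrepresentations. Every subrepresentation is in particular $X$-invariant. The $X$-invariant subspaces of a single nilpotent Jordan block are exactly the kernels $\ker X^k=\langle e_1,\dots,e_k\rangle$, and these form a chain. Each $\ker X^k$ is also invariant under $Y$, since $Y$ is diagonal in this basis, and hence under all four operators $X$, $Y$, $-Y$, $X$. Consequently the subrepresentations are precisely $\langle e_1,\dots,e_k\rangle$ for $0\le k\le n$.

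For $n\ge 2$ the line $\KK e_1$ is a proper nonzero subrepresentation, so the representation is not simple. Any decomposition $V=U\oplus W$ into subrepresentations would need two nonzero members of a chain that intersect trivially, which is impossible, so the representation is indecomposable. This argument uses only the Jordan block structure of $X$, so no restriction on the characteristic is needed.

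The only real obstacle I foresee is finding a solution of the mixed identity \Cref{rep:id3}, and the ansatz $A=-Y$, $B=X$ is designed to trivialise it. If that ansatz failed, I would instead try $\rho=0$ and solve directly for $\mu$; however, this forces $A=0$ and then $B=0$, which is why I use a coupled choice of $\rho$ and $\mu$.
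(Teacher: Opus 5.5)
Your proof is correct and is essentially the paper's argument. Your ansatz $\mu(x)=-\rho(y)$, $\mu(y)=\rho(x)$ is exactly the shape of \Cref{example_line2}, and your representation coincides with it up to reindexing and rescaling $v_i\mapsto v_i/i!$ in characteristic $0$; your Jordan-block description of the invariant subspaces is the chain argument behind \Cref{redindec1}. Your normalisation $\rho(x)e_i=e_{i-1}$ also has a small bonus: it works in every characteristic, whereas the paper's coefficients $i$ vanish when the characteristic divides $i$.
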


\begin{proof}
	Some families of such representations will be constructed in \Cref{repsCL2_sect}.
\end{proof}

\subsection{Counterexample to Levi's theorem}

Let us note that to find a counterexample to Levi's Theorem it suffices to find a compatible Lie algebra $\mathfrak{g}$ that is isomorphic to $\mathfrak{g}/\mathrm{rad}(\mathfrak{g})\oplus\mathrm{rad}(\mathfrak{g})$ as a vector space but admits no subalgebra isomorphic to $\mathfrak{g}/\mathrm{rad}(\mathfrak{g})$.

The simplest possible case is a one-dimensional abelian extension, so let $V=\CC z$. A direct computation shows that the only possible one-dimensional representations of $CL_{2,4}$ are the following:
\[
\rho(x)z = 0, \quad \rho(y)z = -\lambda z, \quad \mu(x)z = \lambda z, \quad \mu(y)z=0.
\]
For simplicity, we take $\lambda = 1$.

To build an extension, we use a cocycle of $\g$ with values in $(V,\rho,\mu)$, i.e.~a pair of bilinear alternating maps $\omega=(\underline{\omega},\undertilde{\omega})\colon\g\times \g \rightarrow~V$ that satisfy the cocycle identities:
\begin{align*}
	0=& \: \underline{\omega}(\sqb{x,y},z)+ \underline{\omega}(\sqb{z,x},y) + \underline{\omega}(\sqb{y,z},x), \\
	0=& \: \undertilde{\omega}(\cb{x,y},z)+ \undertilde{\omega}(\cb{z,x},y) + \undertilde{\omega}(\cb{y,z},x),  \\
	0=& \: \underline{\omega}(\cb{x,y},z)+ \underline{\omega}(\cb{z,x},y) + \underline{\omega}(\cb{y,z},x)  +\: \undertilde{\omega}(\sqb{x,y},z)+ \undertilde{\omega}(\sqb{z,x},y) + \undertilde{\omega}(\sqb{y,z},x),
\end{align*}
for all $x, y, z \in \g$.

We refer the reader to Liu, Sheng, and Bai~\cite{Liu2023} for the general theory of compatible cohomology.

Since $CL_{2,4}$ is two-dimensional, any pair of bilinear alternating maps satisfies these identities, and since $V$ is one-dimensional, the only such maps are those of the form
\[
\underline{\omega}(x,y) = pz, \quad \undertilde{\omega}(x,y) = qz, \quad\text{for some } p,q \in \CC.
\]
We let $p=-1$, $q = 0$, so that $\underline{\omega}(x,y) = -z,~\undertilde{\omega}(x,y) = 0$. 


The general definition of abelian extension given in \cite[Section 4.2.]{Liu2023} appears to contain a sign error; with the corrected formula, the products on $\g=\ab{x,y,z}=CL_{2,4} \oplus \CC z$ are as follows:
\begin{align*}
	\sqb{(a,u),(b,v)}_\g &= (\sqb{a,b}_{CL_{2,4}}, \rho(a)v-\rho(b)u-\underline{\omega}(a,b)) \\
	\cb{(a,u),(b,v)}_\g &= (\cb{a,b}_{CL_{2,4}}, \mu(a)v-\mu(b)u-\undertilde{\omega}(a,b)), \text{ for } a,b \in CL_{2,4}, u,v \in \CC z.
\end{align*}
With our particular choices of representation and cocycle, we end up with the following non-zero products on the basis elements:
\begin{alignat*}{2}
	\sqb{x,y} &= x + z, \quad &  \sqb{y,z} &= -z, \\
	\cb{x,y} &= y,  & \cb{x,z}&=z.
\end{alignat*}

We now proceed to show that this algebra is indeed a counterexample to Levi's Theorem, by proving the following:

\begin{prop}\label{Levicounter}
	The compatible Lie algebra $\g$ with basis ${x,y,z}$ and relations
	\begin{align*}
		&\sqb{x,y} = x + z, \quad \sqb{y,z} = -z, \qquad \cb{x,y} = y, \quad \cb{x,z}=z,
	\end{align*}
	has no subalgebra isomorphic to $\g/\rad(\g)$. Thus, Levi's Theorem fails to hold in the class of compatible Lie algebras.
\end{prop}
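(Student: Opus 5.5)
First I will determine $\rad(\g)$. The span $\CC z$ is an ideal: from the relations, $\sqb{z,\g}\subseteq \CC z$ and $\cb{z,\g}\subseteq\CC z$, and it is abelian, hence solvable. The quotient $\g/\CC z$ has basis $\bar x,\bar y$ with $\sqb{\bar x,\bar y}=\bar x$ and $\cb{\bar x,\bar y}=\bar y$, so it is $CL_{2,4}$, which is simple (as stated after \Cref{CL2def}). By \Cref{derived_properties}(c), any solvable ideal $\mathfrak r\supsetneq \CC z$ would give a nonzero solvable ideal $\mathfrak r/\CC z$ of $CL_{2,4}$. That ideal would have to be all of $CL_{2,4}$, and $CL_{2,4}$ is not solvable: its derived algebra contains $x$ and $y$, so $CL_{2,4}^{(1)}=CL_{2,4}$. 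Hence $\rad(\g)=\CC z$ and $\g/\rad(\g)\cong CL_{2,4}$.

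Next I will suppose, for a contradiction, that $\mathfrak s\subseteq\g$ is a subalgebra isomorphic to $CL_{2,4}$. Since $\dim\mathfrak s=2$, we have $\mathfrak s\cap\CC z=0$; otherwise $\mathfrak s$ would contain $z$ and its image in the quotient would be one-dimensional, while a simple algebra has no one-dimensional quotient by a nonzero ideal. Indeed, $\mathfrak s\cap \CC z$ is an ideal of $\mathfrak s$, so if it were nonzero it would be a nonzero proper ideal, contradicting simplicity. Therefore $\mathfrak s$ maps isomorphically onto the quotient, i.e. $\mathfrak s$ is a complement, so it has a basis $x'=x+az$, $y'=y+bz$ for some $a,b\in\CC$. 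Rather than matching an abstract isomorphism, it then suffices to require that $\mathfrak s$ be closed under both brackets.

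Now I compute the brackets:
\[
\sqb{x',y'}=\sqb{x,y}+a\sqb{z,y}=x+z+az=x'+(1)z,
\]
using $\sqb{z,y}=z$ and $\sqb{x,z}=0$. Closure requires $\sqb{x',y'}\in\mathfrak s$. Since $\sqb{x',y'}$ projects to $\bar x$, this forces $\sqb{x',y'}=x'$, i.e.\ $x+(1+a)z=x+az$, which is impossible. This already gives the contradiction; the $\cb{-,-}$ relations need not even be used, though $\cb{x',y'}=y+a\cb{z,y}+b\cb{x,z}=y+bz=y'$ is consistent. Hence no such $\mathfrak s$ exists.

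The main care point is the sign convention: I need to double-check $\sqb{z,y}=-\sqb{y,z}=z$, so that the $a$-terms cancel exactly and the constant $z$ coming from the cocycle survives. That surviving term is precisely the nontriviality of $\underline\omega$. The cancellation of the $a$-terms reflects the fact that $\rho(y)$ acts by $-1$ while $\rho(x)=0$, so no coboundary can absorb $\underline\omega(x,y)$. Everything else is routine.
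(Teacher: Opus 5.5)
Your proof is correct and follows essentially the paper's route: a direct coefficient comparison in $\sqb{-,-}$, in which the $z$-term coming from the cocycle obstructs closure. Parametrizing the putative subalgebra by the lifts $x+az$, $y+bz$, rather than by an arbitrary basis normalized via the isomorphism, just makes the contradiction a one-line check; you also justify, using simplicity of $CL_{2,4}$ and the ideal $\mathfrak s\cap\CC z$ of $\mathfrak s$, that such a subalgebra must be a complement to $\CC z$, which the paper's proof assumes.
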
 

\begin{proof}
	It can be directly checked that $\ab{z}$ is a solvable ideal of $\g$. 
	Since $\g / \ab{z} \cong CL_{2,4}$ is semisimple, it follows that $\rad(\g)=\ab{z}$.
	
	Now, assume that we have a subalgebra $\g_0=\ab{X=ax+by+cz, Y=a'x+b'y+c'z}$ of $\g$ such that $\g \cong \ab{z} \oplus \g_0$ and $\g_0 \cong CL_{2,4}$. Without loss of generality, we may assume that $\sqb{X,Y}=X$ and $\cb{X,Y}=Y$.
	
	Computing the expression $\sqb{X,Y}$ we obtain
	\begin{align*}
		\sqb{X,Y} &= \sqb{ax+by+cz,a'x+b'y+c'z} = ab'\sqb{x,y}+ba'\sqb{y,x}+bc'\sqb{y,z}+cb'\sqb{z,y}\\
		&= (ab'-ba')(x+z)+(cb'-c'b)z.
	\end{align*}
	Setting this equal to $X=ax+by+cz$ and comparing coefficients yields the following linear system of equations
	\begin{align*}
		\begin{cases*}
			a = ab'-ba' \\
			b = 0 \\
			c=ab'-ba'+cb'-bc'
		\end{cases*} \Leftrightarrow 
		\begin{cases*}
			a = ab' \\
			b = 0 \\
			c=ab'+cb'
		\end{cases*},
	\end{align*}
	which yields either $b'=1$ or $a=c=0$. Since the set $\cb{z,X,Y}$ is required to be linearly independent, we must have $b'=1$ and thus $c=a+c$, so $a=0$ and $X=cz$, a contradiction.
\end{proof}

\section{Two-dimensional compatible Lie algebras and their representations}\label{2d_sect}

In this section, we classify all two-dimensional compatible Lie algebras and discuss their representations. In the case of two-dimensional solvable compatible Lie algebras, we show that they have wild representation type. We next focus on the representations of the only simple algebra of dimension $2$, obtaining a family of counterexamples to Weyl's Theorem and classifying a family of irreducible representations that resemble the finite-dimensional representations of $\mathfrak{sl}_2$. 

\subsection{Classification of two-dimensional compatible Lie algebras}\label{2d_comp_class_subsect}

We begin by providing a complete classification of two-dimensional compatible Lie algebras over an arbitrary field $\KK$ of characteristic not $2$. 

\begin{theorem}\label{2d_comp_class}
	In dimension $2$, there are the following isomorphism classes of compatible Lie algebras:
	\begin{itemize}
		\item $CL_{2,0}$, abelian;
		\item $CL_{2,1}$, with nonzero brackets $\sqb{e_1,e_2}=e_1$;
		\item $CL_{2,2}$, with nonzero brackets $\cb{e_1,e_2}=e_1$;
		\item $CL_{2,3}^\alpha$, $\alpha \in \KK^\times$, with nonzero brackets $\sqb{e_1,e_2}=e_1$, $\cb{e_1,e_2}=\alpha e_1$;
		\item $CL_{2,4}$, with nonzero brackets $\sqb{e_1,e_2}=e_1$, $\cb{e_1,e_2}= e_2$.
	\end{itemize}
\end{theorem}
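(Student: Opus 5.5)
The plan is to reduce everything to linear algebra on a two-dimensional space. On a two-dimensional vector space $\g$ with basis $e_1,e_2$, an alternating bilinear product is determined by the single vector $\sqb{e_1,e_2}$. Every such product automatically satisfies the Jacobi identity, since any triple of vectors in a two-dimensional space is linearly dependent. Likewise the mixed Jacobi identity \eqref{mixedjacobi} holds automatically: its left-hand side is alternating and trilinear in $(x,y,z)$, hence vanishes identically in dimension $2$. So a compatible Lie algebra structure on $\g$ is the same as a pair of linear maps $\Lambda^2\g\to\g$, i.e.\ a pair of vectors $(u,v)=(\sqb{e_1,e_2},\cb{e_1,e_2})$. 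A change of basis by $g\in GL_2(\KK)$ multiplies $e_1\wedge e_2$ by $\det g$. Hence two structures $(u,v)$ and $(u',v')$ are isomorphic exactly when there is $g\in GL_2$ with $(u',v')=(\det g)^{-1}(gu,gv)$, after expressing everything in coordinates. The classification therefore amounts to describing the orbits of this $GL_2$-action on pairs of vectors in $\KK^2$.

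I will split into cases according to $\dim\operatorname{span}(u,v)$.

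\emph{Dimension $0$.} Here $u=v=0$, giving $CL_{2,0}$.

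\emph{Dimension $2$.} Here $u,v$ are independent, and I choose the new basis $e_1'=u$, $e_2'=v$. Then $e_1'\wedge e_2'=d\, e_1\wedge e_2$ with $d=\det(u\,|\,v)\neq 0$, so the new brackets are $\sqb{e_1',e_2'}=d\,e_1'$ and $\cb{e_1',e_2'}=d\,e_2'$. Rescaling $e_2'$ by $d^{-1}$ then gives $\sqb{e_1',e_2''}=e_1'$ and $\cb{e_1',e_2''}=e_2'=d\,e_2''$. This does not quite normalise, so instead I will use the basis $e_1'=\alpha u$, $e_2'=\beta v$ and solve for the scalars. The conditions are $\alpha\beta d=\beta$ and $\alpha\beta d=\alpha$, so $\alpha=\beta=d^{-1}$, which is always solvable. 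This yields $CL_{2,4}$.

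\emph{Dimension $1$.} Write $\operatorname{span}(u,v)=\KK w$, so $u=a w$ and $v=b w$ with $(a,b)\neq(0,0)$. Take $e_1'=w$ and any $e_2'$ completing a basis; rescaling $e_2'$ multiplies both coefficients by the same nonzero scalar.
\begin{itemize}
\item If $b=0$, normalise $a=1$ to get $CL_{2,1}$.
\item If $a=0$, normalise $b=1$ to get $CL_{2,2}$.
\item If $ab\neq0$, normalise $a=1$ to get $CL_{2,3}^\alpha$ with $\alpha=b/a\in\KK^\times$.
\end{itemize}

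Finally I must show these classes are pairwise non-isomorphic and that the parameter $\alpha$ is an invariant. The dimension of $\operatorname{span}(\sqb{\g,\g},\cb{\g,\g})$ separates $CL_{2,0}$, $CL_{2,4}$ and the rest. Among the one-dimensional cases, use the isomorphism invariants of which of $\sqb{-,-}$, $\cb{-,-}$ vanish. When neither vanishes, observe that the ratio $\alpha$ defined by $\cb{x,y}=\alpha\sqb{x,y}$ for all $x,y$ is basis independent, because both brackets transform by the same factor $(\det g)^{-1}g$. This invariance check is the only step needing care, since one must confirm that no further rescaling can change $\alpha$. It is routine, because $\alpha$ is intrinsically defined as the scalar relating the two bilinear maps. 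I expect no real obstacle; the main care point is the normalisation in the two-dimensional case, handled above.
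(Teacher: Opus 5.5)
Your proof is correct. It is a more invariant organisation of the same elementary normal-form argument, and it genuinely differs from the paper mainly in the non-isomorphism part.

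On existence, the paper splits first on whether $\underline{\g}$ is abelian and then on whether $\cb{\g,\g}\subseteq\sqb{\g,\g}$. Your split by $\dim\operatorname{span}(u,v)=\dim(\sqb{\g,\g}+\cb{\g,\g})$ turns the paper's Case~2 into your two-dimensional case. Your one-dimensional case merges the paper's Case~1 with its subcase ``$\underline{\g}$ abelian, $\undertilde{\g}$ not''.

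On non-isomorphism, the paper fixes an explicit isomorphism $\varphi$. It shows that $\varphi(e_1)=\lambda e_1$ must lie in both $\sqb{\hh,\hh}$ and $\cb{\hh,\hh}$, which rules out $CL_{2,4}$. It then writes $\varphi(e_2)=\gamma e_1+\delta e_2$ and computes $\delta=1$, hence $\alpha=\beta$. You instead use three intrinsic invariants: $\dim(\sqb{\g,\g}+\cb{\g,\g})$, the vanishing of each bracket, and the scalar $\alpha$ with $\cb{-,-}=\alpha\sqb{-,-}$. The last is preserved because $\alpha\varphi(\sqb{x,y})=\varphi(\cb{x,y})=\cb{\varphi x,\varphi y}=\beta\sqb{\varphi x,\varphi y}=\beta\varphi(\sqb{x,y})$. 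Your route is shorter and coordinate-free, and the orbit description $(u,v)\mapsto(\det g)^{-1}(gu,gv)$ makes the completeness of the list transparent. The paper's computation is more hands-on but needs no orbit formalism.

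Two small points for a write-up:
\begin{itemize}
\item The automatic Jacobi identity needs the same ``alternating and trilinear'' remark you give for the mixed identity. Linear dependence of $x,y,z$ alone is not the reason.
\item In the two-dimensional case, delete the aborted first normalisation. Also rename the scalars $\alpha,\beta$ there, since they clash with the parameter of $CL_{2,3}^\alpha$.
\end{itemize}
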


\begin{rmk}
	Note that the mixed Jacobi identity (\Cref{mixedjacobi}) holds trivially for any pair of Lie brackets in a two-dimensional vector space. Thus, the algebras listed in \Cref{2d_comp_class} are indeed compatible Lie algebras.
\end{rmk}

\begin{proof}[Proof of \Cref{2d_comp_class}]
	Let $\g$ be a compatible Lie algebra of dimension $2$.
	
	Suppose first that $\underline\g$ is not abelian. Then $\sqb{\g, \g}$ has dimension $1$, say $\sqb{\g, \g} = \KK e_1$ for some nonzero element $e_1 \in \g$. Choose any $e_2 \in \g$ such that $e_1, e_2$ is a basis of $\g$. In particular, since $\sqb{e_1,e_2}=\lambda e_1$ for some $\lambda \in \KK^\times$, we can replace $e_2$ with $\lambda^{-1}e_2$ and thus assume that $\sqb{e_1,e_2}= e_1$.
	
	\underline{Case 1:} $\cb{\g, \g} \subseteq \KK e_1$.
	
	Then $\cb{e_1,e_2} = \alpha e_1$ for some $\alpha \in \KK$. If $\alpha = 0$ then $\g \cong CL_{2,1}$, and if $\alpha \neq 0$ then $\g \cong CL_{2,3}^\alpha$.
	
	\underline{Case 2:} $\cb{\g, \g} \not\subseteq \KK e_1$.
	
	Then, as $\dim \cb{\g, \g} = 1$, it follows that $\g = \KK e_1 \oplus \KK e_2$ where $\cb{\g, \g} = \KK e_2$. So $\cb{e_1,e_2} = \lambda e_2$, for some $\lambda \in \KK^\times$. If we replace $e_1$ with $\lambda^{-1}e_1$  we get $
	\sqb{e_1,e_2} = e_1 \text{ and } \cb{e_1,e_2} = e_2,
	$
	so $\g \cong CL_{2,4}$.
	
	Now suppose that $\underline\g$ is abelian. If $\undertilde\g$ is not abelian, then by the argument at the beginning of the proof, we have $\g \cong CL_{2,2}$. If both products are abelian, $\g \cong CL_{2,0}$.
	
	In terms of isomorphism classes, suppose that $\varphi\colon\g \rightarrow \hh$ is an isomorphism of compatible Lie algebras. Then $\underline\g$ is abelian if and only if $\underline\hh$ is abelian, and similarly for $\undertilde\g$ and $\undertilde\hh$. Thus, the algebras $CL_{2,0}$, $CL_{2,1}$ and $CL_{2,2}$ are in distinct isomorphism classes; moreover, $CL_{2,3}^\alpha$ and $CL_{2,4}$ are not isomorphic to any of the former three algebras. Hence, it remains to show that $CL_{2,3}^\alpha\cong CL_{2,3}^\beta$ if and only if $\alpha = \beta$, and that $CL_{2,4} \not\cong CL_{2,3}^\alpha$ for all $\alpha \in \KK^\times$.
	
	Suppose that  $\varphi\colon\g \rightarrow \hh$ is an isomorphism of compatible Lie algebras, where $\g = CL_{2,3}^\alpha$ and $\hh = CL_{2,3}^\beta$ or $\hh = CL_{2,4}$.
	Then, 
	\[
	0 \neq \varphi(e_1) = \varphi(\sqb{e_1,e_2}_\g) = \sqb{\varphi(e_1),\varphi(e_2)}_\hh \in \sqb{\hh, \hh} = \KK e_1,
	\] 
	so $\varphi(e_1) = \lambda e_1$ for some $\lambda \in \KK^\times$.
	Similarly, $\lambda e_1 = \varphi (e_1) \in \cb{\hh,\hh}$. It follows that  $\hh$ cannot be $CL_{2,4}$. 
	
	So assume that $\hh = CL_{2,3}^\beta$. Write $\varphi(e_2) = \gamma e_1 + \delta e_2$, with $\gamma,\delta \in \KK$. We have
	\begin{align*}
		\lambda e_1 &= \varphi(e_1) = \varphi(\sqb{e_1,e_2}_\g) =  \sqb{\varphi(e_1),\varphi(e_2)}_\hh = \lambda\sqb{e_1,\gamma e_1 + \delta e_2} = \lambda\delta e_1,
	\end{align*}
	therefore, $\delta = 1$. Finally,
	\begin{align*}
		\alpha\lambda e_1 &= \varphi(\alpha e_1) = \varphi(\cb{e_1,e_2}_\g) =  \cb{\varphi(e_1),\varphi(e_2)}_\hh = \lambda\cb{e_1,\gamma e_1 + e_2} = \lambda\beta e_1,
	\end{align*}
	so $\alpha = \beta$.
\end{proof}

Having obtained the classification, we add a couple of remarks.

\begin{rmk}\hfill\label{classification_remarks}
	\begin{itemize}
		\item (See \cite[Definition 2.5]{NCLclassification25}) $CL_{2,1}$ is skew-isomorphic to $CL_{2,2}$ and $CL_{2,3}^\alpha$ is skew-isomorphic to $CL_{2,3}^{1/\alpha}$;
		\item $CL_{2,0}$ is the only two-dimensional nilpotent compatible Lie algebra;
		\item $CL_{2,1}$, $CL_{2,2}$ and $CL_{2,3}^\alpha$ are all solvable and non-nilpotent. In each case, the first term of the derived series is $\ab{e_1}$, and the derived series terminates at the following term;
		\item $CL_{2,4}$ is the only two-dimensional simple compatible Lie algebra.
	\end{itemize}
\end{rmk}

\subsection{Representations of two-dimensional compatible Lie algebras}\label{2d_reps_subsect}

We start this section by stating and proving a lemma that will be useful. 

\begin{lemma}\label{combination_reps}
	Given a compatible Lie algebra $(\g, \sqb{-,-},\cb{-,-})$, then, for any $\lambda_1, \lambda_2 \in \KK$, the triple $(\g', \sqb{-,-}', \cb{-,-}')$ is a compatible Lie algebra, where $\g' = \g$ as vector spaces, and the products of $\g'$ are given by $\sqb{-,-}'=\sqb{-,-}$ and $\cb{-,-}' = \lambda_1\sqb{-,-}+\lambda_2\cb{-,-}$. 
	
	Moreover, if $(V,\rho,\mu)$ is a compatible representation of $\g$, then $(V,\rho', \mu')$ is a representation of $\g'$, where $\rho'=\rho$ and $\mu' = \lambda_1\rho+\lambda_2\mu$.	Furthermore, the subrepresentations of $(V,\rho, \mu)$ are also subrepresentations of $(V,\rho',\mu')$, and in case $\lambda_2 \neq 0$, the converse is also true.
\end{lemma}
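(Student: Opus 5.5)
We verify the three claims in turn, using \Cref{compatibleequiv} and the characterisation of representations via \Cref{rep:id1,rep:id2,rep:id3}.

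\textbf{The algebra $\g'$.} By \Cref{compatibleequiv}\eqref{comp2}, every linear combination $\lambda\sqb{-,-}+\lambda'\cb{-,-}$ is a Lie bracket on $\g$. In particular $\cb{-,-}'=\lambda_1\sqb{-,-}+\lambda_2\cb{-,-}$ is a Lie bracket, and $\sqb{-,-}'=\sqb{-,-}$ trivially is one. For compatibility we check condition \eqref{comp2} for $\g'$. Any combination
\[
a\sqb{-,-}'+b\cb{-,-}'=(a+b\lambda_1)\sqb{-,-}+b\lambda_2\cb{-,-}
\]
is again a linear combination of the original brackets, hence a Lie bracket.

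\textbf{The representation $(V,\rho',\mu')$.} The cleanest route is the analogue of condition \eqref{comp2} for representations. For a representation $(V,\rho,\mu)$, I will show that $(V,a\rho+b\mu)$ is a representation of $(\g,a\sqb{-,-}+b\cb{-,-})$ for all $a,b\in\KK$. To do this, expand $a\rho(x)+b\mu(x)$ against $a\rho(y)+b\mu(y)$ and compute the commutator. The $a^2$, $b^2$ and $ab$ coefficients match those of
\[
(a\rho+b\mu)(a\sqb{x,y}+b\cb{x,y})
\]
exactly by \Cref{rep:id1}, \Cref{rep:id2} and \Cref{rep:id3} respectively.

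Once this holds, we check the three conditions of \Cref{representation} for $(V,\rho',\mu')$:
\begin{itemize}
\item $\rho'=\rho$ represents $\sqb{-,-}'$;
\item $\mu'=\lambda_1\rho+\lambda_2\mu$ represents $\lambda_1\sqb{-,-}+\lambda_2\cb{-,-}=\cb{-,-}'$;
\item $\rho'+\mu'=(1+\lambda_1)\rho+\lambda_2\mu$ represents $\sqb{-,-}'+\cb{-,-}'=(1+\lambda_1)\sqb{-,-}+\lambda_2\cb{-,-}$.
\end{itemize}
Each of these is a special case of the general statement. The only real computation is the bilinear expansion, and it is a routine bookkeeping of the $ab$ cross terms.

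\textbf{Subrepresentations.} A subspace $W$ stable under all $\rho(x)$ and $\mu(x)$ is stable under $\rho'(x)=\rho(x)$ and under $\mu'(x)=\lambda_1\rho(x)+\lambda_2\mu(x)$. Conversely, if $\lambda_2\neq0$, then
\[
\mu(x)=\lambda_2^{-1}\bigl(\mu'(x)-\lambda_1\rho'(x)\bigr),
\]
so stability under $\rho'$ and $\mu'$ gives stability under $\rho$ and $\mu$. No step is a genuine obstacle; the only place requiring care is the cross-term check in the second part.
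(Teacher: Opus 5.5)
Your proof is correct. The first and third parts coincide with the paper's argument. For the converse on subrepresentations you write $\mu(x)=\lambda_2^{-1}(\mu'(x)-\lambda_1\rho'(x))$ directly, while the paper observes that the construction can be inverted with coefficients $-\lambda_1\lambda_2^{-1}$ and $\lambda_2^{-1}$; these are the same point.

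The real difference is in the middle part. The paper checks \Cref{rep:id1,rep:id2,rep:id3} for $(\rho',\mu')$ one at a time. It writes out the expansion for \Cref{rep:id2}, which is exactly your bilinear computation specialised to $(a,b)=(\lambda_1,\lambda_2)$. For \Cref{rep:id3} it only asserts that a longer computation of the same kind works.

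You instead prove once the representation-level analogue of condition (ii) of \Cref{compatibleequiv}: $a\rho+b\mu$ represents $a\sqb{-,-}+b\cb{-,-}$ for all $a,b\in\KK$. You then use the $\rho+\mu$ formulation of \Cref{representation} rather than its unpacked form \Cref{rep:id3}. The third condition becomes the instance $(a,b)=(1+\lambda_1,\lambda_2)$ and needs no separate computation.

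What this buys:
\begin{itemize}
\item Your proof is complete, with no omitted step.
\item It shows the lemma is just closure of representations under the same linear reparametrisation of the brackets used in the first part.
\end{itemize}
The paper's route stays at the level of the individual identities and leaves the messiest one to the reader.
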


\begin{proof}
	The first part of the statement is trivial, as any linear combination of $\sqb{-,-}'$ and $\cb{-,-}'$ is also a linear combination of $\sqb{-,-}$ and $\cb{-,-}$, and thus a Lie product.
	
	Now, to prove the second part of the statement, we just verify that $\rho'$ and $\mu'$ satisfy 
	\Crefrange{rep:id1}{rep:id3}.
	
	Since $\sqb{-,-}'=\sqb{-,-}$ and $\rho'=\rho$, it is immediate that \Cref{rep:id1} holds.
	
	Now, let $x,y \in \g' (= \g)$. Consider the left-hand side of \Cref{rep:id2} for $\mu'$ and $\cb{-,-}'$:
	\begin{align*}
		\mu'(\cb{x,y}') &=  \mu'(\lambda_1\sqb{x,y}+\lambda_2\cb{x,y}) \\
		&= \lambda_1\rho(\lambda_1\sqb{x,y}+\lambda_2\cb{x,y})+\lambda_2\mu(\lambda_1\sqb{x,y}+\lambda_2\cb{x,y}) \\
		&= \lambda_1^2\rho(\sqb{x,y})\lambda_1\lambda_2\big(\rho(\cb{x,y})+\mu(\sqb{x,y})\big) +  \lambda_2^2\mu(\cb{x,y}).
	\end{align*}
	Focus now on the right-hand side:
	\begin{align*}
		\mu'(x)\mu'(y)-\mu'(y)\mu'(x) &= \nb{ \lambda_1\rho(x)+\lambda_2\mu(x)}\nb{ \lambda_1\rho(y)+\lambda_2\mu(y)} \\ &\qquad -  \nb{ \lambda_1\rho(y)+\lambda_2\mu(y)}\nb{ \lambda_1\rho(x)+\lambda_2\mu(x)} \\
		&= \lambda_1^2\rho(x)\rho(y) + \lambda_1\lambda_2\rho(x)\mu(y) + \lambda_2\lambda_1\mu(x)\rho(y) + \lambda_2^2\mu(x)\mu(y) \\
		&\qquad -\lambda_1^2\rho(y)\rho(x) - \lambda_1\lambda_2\rho(y)\mu(x) - \lambda_2\lambda_1\mu(y)\rho(x) - \lambda_2^2\mu(y)\mu(x) \\
		&=\lambda_1^2\rho(\sqb{x,y})+\lambda_1\lambda_2\big(\rho(\cb{x,y})+\mu(\sqb{x,y})\big) +  \lambda_2^2\mu(\cb{x,y}),
	\end{align*}
	where in the last equality, we group like-terms together and use \Cref{rep:id1,rep:id2,rep:id3} with $\rho$ and $\mu$. Since both sides are equal, the identity is satisfied.
	
	We proceed in the same way for the third identity, and although the computations are longer, the result still holds.
	
	Finally, the last part of the statement can be deduced by the fact that if $W$ is a subrepresentation of $(V,\rho,\mu)$, meaning it is invariant under $\rho(x)$ and $\mu(x)$ for all $x \in \g$, then it must also be invariant under any linear combination of those maps. In particular, it is invariant under $\mu' = \lambda_1\rho+\lambda_2\mu$, and thus it is a subrepresentation of $(V,\rho',\mu')$. Conversely, if $\lambda_2 \neq 0$, then $(\g, \sqb{-,-},\cb{-,-})$ and $(V,\rho,\mu)$ are obtained from $(\g', \sqb{-,-}',\cb{-,-}')$ and $(V',\rho',\mu')$ by the process above, as $\sqb{-,-} = \sqb{-,-}'$ and $\cb{-,-} = -\lambda_1 \lambda_2^{-1}\sqb{-,-}' + \lambda_2^{-1}\cb{-,-}'$.
\end{proof}

{
\begin{rmk}
	One may associate a diagram to a representation $(V,\rho,\mu)$ of a compatible Lie algebra $\g$ in the following way: Take a set of vertices in bijection with a basis ${v_0,\ldots,v_n}$ of $V$, and for each $v_i$ draw an arrow from $v_i$ to $v_j$ with label $\lambda_{ij}$ if $\lambda_{ij} \neq 0$ and $\lambda_{ij}$ is the coefficient of $v_j$ in the expression for $\rho(z)v_i$ or $\mu(z)v_i$ for $z$ in the basis of $\g$. We may also
	 assign colours to the arrows based on the specific map-basis element pair that generates them. 
	 In what follows, we will work with a compatible Lie algebra of dimension two, either $CL_{2,1}$ or $CL_{2,4}$. To avoid overloading notation, we will rename $e_1$ and $e_2$ to $x$ and $y$, respectively,  and we will use the following colours:
	\[
	\color{blue} \rho(x) = \text{blue} \quad \color{cyan} \rho(y)=\text{cyan} \quad \color{red} \mu(x)=\text{red} \quad \color{orange} \mu(y)=\text{orange}
	\]
	The diagrams are especially clear and useful whenever we are working with representations where basis elements are mapped to other basis elements up to a scalar, e.g.~$\rho(x)v_i = \lambda_{ij}v_j$.
\\	
	We will provide many examples of such diagrams in what follows. 
\end{rmk}}

{
We omit the computations to verify that each is actually a representation of the corresponding algebra, as they are simple yet cumbersome. One can verify this fact by computing \Cref{rep:id1,rep:id2,rep:id3}  on an arbitrary basis vector.

{
	\subsection{Representations of the solvable compatible Lie algebras of dimension \texorpdfstring{$2$}{2}}\label{repsCLothers_sect}
}

\Cref{combination_reps} allows us to reduce the study of the representations of $CL_{2,1}$, $CL_{2,2}$ and $CL_{2,3}^\alpha$ to the study of a single one of these algebras.

\begin{cor}\label{transportation_reps}
	There is a bijective correspondence between the representations of $CL_{2,1}$, $CL_{2,2}$ and of $CL_{2,3}^\alpha$, with $\alpha \in \KK^\times$. Moreover, this bijection preserves subrepresentations.
\end{cor}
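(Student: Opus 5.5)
The plan is to apply \Cref{combination_reps} directly, choosing scalars $\lambda_1,\lambda_2$ so that the transformed brackets realise one algebra from another. All three algebras share the bracket $\sqb{e_1,e_2}=e_1$ except $CL_{2,2}$, so I will treat $CL_{2,1}$ as the hub. Starting from $CL_{2,1}$, where $\cb{-,-}=0$, the lemma with $\lambda_1=\alpha$ and $\lambda_2=1$ produces $\cb{-,-}'=\alpha\sqb{-,-}$, which is exactly $CL_{2,3}^\alpha$ in the same basis. It sends a representation $(V,\rho,\mu)$ to $(V,\rho,\alpha\rho+\mu)$. The inverse assignment $(V,\rho,\mu)\mapsto(V,\rho,\mu-\alpha\rho)$ comes from the same lemma applied to $CL_{2,3}^\alpha$ with $\lambda_1=-\alpha$ and $\lambda_2=1$, which recovers $\cb{-,-}=0$. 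Both assignments leave $V$ and $\rho$ unchanged and are mutually inverse on pairs of operators, so they give a bijection. Since $\lambda_2=1\neq 0$, the last part of \Cref{combination_reps} shows that the subrepresentations on both sides coincide.

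The pair $CL_{2,1}$ and $CL_{2,2}$ needs a different argument, because \Cref{combination_reps} always keeps $\sqb{-,-}$ fixed. Here I will use the symmetry that swaps the two brackets. The triple $(\g,\cb{-,-},\sqb{-,-})$ is again a compatible Lie algebra, because the conditions in \Cref{compatibleequiv} are symmetric in the two products. Likewise $(V,\mu,\rho)$ is a representation of it, because \Cref{rep:id1,rep:id2} are exchanged under the swap and \Cref{rep:id3} is invariant under it. Swapping the brackets of $CL_{2,1}$ gives $CL_{2,2}$ with the same basis, so $(V,\rho,\mu)\mapsto(V,\mu,\rho)$ is an involutive bijection. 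It clearly preserves invariant subspaces, since these are determined by the set of operators $\{\rho(x),\mu(x)\}$, which does not change.

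Composing these bijections yields the correspondence among all three families, and subrepresentations are preserved at every step. The only point that needs care is the $CL_{2,2}$ case: it cannot be reached through \Cref{combination_reps} as stated, so the swap has to be justified separately. That verification is routine, since it only requires checking that \Cref{rep:id3} is symmetric under exchanging $(\rho,\sqb{-,-})$ with $(\mu,\cb{-,-})$. I expect no real obstacle beyond confirming that these bijections are compatible with isomorphisms of algebras, which holds trivially because we stay in the same basis throughout.
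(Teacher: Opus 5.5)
Your proof is correct and follows essentially the same route as the paper. The swap $(V,\rho,\mu)\mapsto(V,\mu,\rho)$ handles $CL_{2,2}$ (the paper justifies it via the skew-isomorphism with $CL_{2,1}$), and \Cref{combination_reps} with $\lambda_1=\alpha$, $\lambda_2=1$ handles $CL_{2,3}^\alpha$; your explicit inverse via $\lambda_1=-\alpha$ and your direct check that \Cref{rep:id3} is symmetric under the swap simply spell out what the paper leaves as clear.
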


\begin{proof}
	Since the algebras $CL_{2,1}$ and $CL_{2,2}$ are skew-isomorphic (see \Cref{classification_remarks}), it is clear that if $(V,\rho,\mu)$ is a representation of one of them, then $(V,\mu,\rho)$ is a representation of the other. It is also clear that this correspondence preserves subrepresentations.
	
	A bijection from the representations of $CL_{2,3}^\alpha$ to the representations of $CL_{2,1}$ is given by an application of \Cref{combination_reps}, noting that $\cb{-,-}_{CL_{2,3}^\alpha} = \alpha \sqb{-,-}_{CL_{2,1}} + 1\cdot\cb{-,-}_{CL_{2,1}}$.
\end{proof}

{With this in mind, we choose to explore representations of $CL_{2,0}$ and $CL_{2,1}$.
}

{\subsubsection{Counterexamples to Lie's Theorem}}

{In this section, we prove \Cref{Liecounter} by constructing a family of irreducible representations of $CL_{2,1}$ for each finite dimension $n+1, n \geq 1$. We note that, although Lie's Theorem requires an algebraically closed field of characteristic $0$, our construction of irreducible representations works over an arbitrary field.	
}

{
For the remainder of this section, we always use $V$ to denote a vector space with basis $v_0, v_1, \ldots, v_n$.
}

{Throughout this section, we denote the generators of $CL_{2,1}$ by $x,y$ and the only nonzero product on the basis elements is $\sqb{x,y}=x$. 
}

{\begin{lemma}\label{CL21reps_zeros}
	Let $\rho(x)=\mu(x)=0 \in \End_\KK(V)$ and $\rho(y),\mu(y)\in \End_\KK(V)$ be arbitrary. Then, the triple $(V,\rho,\mu)$ is always a representation of $CL_{2,0}$ and of $CL_{2,1}$.
\end{lemma}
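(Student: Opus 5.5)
We verify the three identities \Cref{rep:id1,rep:id2,rep:id3} directly. Since all three are bilinear and alternating in $(x,y)$ on both sides (both sides change sign when the arguments are swapped, and vanish when they coincide), it suffices to check them on the single pair of basis elements $(x,y)$ of the two-dimensional algebra; pairs $(x,x)$ and $(y,y)$ give $0=0$ trivially.

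For $CL_{2,0}$ all brackets vanish, so every left-hand side is $0$. The right-hand sides are commutators involving $\rho(x)$ or $\mu(x)$: identity \eqref{rep:id1} gives $\rho(x)\rho(y)-\rho(y)\rho(x)=0$, identity \eqref{rep:id2} gives $\mu(x)\mu(y)-\mu(y)\mu(x)=0$, and identity \eqref{rep:id3} gives $\rho(x)\mu(y)-\mu(y)\rho(x)+\mu(x)\rho(y)-\rho(y)\mu(x)=0$. In each term one factor is $\rho(x)=0$ or $\mu(x)=0$, so all three vanish.

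For $CL_{2,1}$ the only nonzero bracket is $\sqb{x,y}=x$, with $\cb{x,y}=0$. Then the left-hand side of \eqref{rep:id1} is $\rho(x)=0$. The left-hand side of \eqref{rep:id2} is $\mu(0)=0$. The left-hand side of \eqref{rep:id3} is $\rho(0)+\mu(x)=0$. The right-hand sides are identical to the $CL_{2,0}$ case, hence zero. This completes the check.

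There is no real obstacle. The only point needing care is justifying the reduction to the basis pair $(x,y)$, which follows from bilinearity and antisymmetry of both sides of each identity.
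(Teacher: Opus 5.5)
Your proposal is correct and follows essentially the same route as the paper, which also verifies directly that both sides of the three representation identities vanish when $\rho(x)=\mu(x)=0$. Your reduction to the single basis pair via bilinearity and antisymmetry just makes explicit a step the paper leaves implicit.
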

\begin{proof}
	We directly verify that, for $CL_{2,0}$ and $CL_{2,1}$, both sides of \Cref{rep:id1,rep:id2,rep:id3} are identically null whenever $\rho(x)$ and $\mu(x)$ are the zero endomorphism.
\end{proof}
}
{
This gives us the freedom to choose $\rho(y)$ and $\mu(y)$ so that the resulting representation is irreducible for arbitrary $n\geq 1$.}


\begin{example}
	Let $\rho(y)v_i = v_{i-1}$ and $\mu(y)v_i = v_{i+1}$, with the convention that $v_{-1}:=0 =:v_{n+1}$. This is a representation of $CL_{2,1}$ by the previous lemma, and we claim it is irreducible. In fact, we can apply $\rho(y)$ successively to any nonzero linear combination of basis vectors in order to obtain a multiple of $v_0$, and from there, applying $\mu(y)^i$ yields $v_i$, thereby generating all of $V$, and therefore the representation is irreducible.
	
	The diagram for this representation is as follows.
	
	\begin{center}
		\begin{tikzpicture}[node distance={25mm},thick,main/.style = {draw, circle,minimum size=1.1cm}] 
			\node[main] (0) {$v_0$}; 
			\node[main] (1) [right of=0] {$v_1$};
			\node[main] (2) [right of=1] {$v_{i-1}$};
			\node[main] (3) [right of=2] {$v_{i}$};
			\node[main] (4) [right of=3] {$v_{n-1}$};
			\node[main] (5) [right of=4] {$v_n$};

			\path (1) -- node{$\ldots$} (2);
			\path (3) -- node{$\ldots$} (4);
			
			\draw[->,color=cyan] (1)  edge[bend right] node[midway, above]{$1$}  (0) ;
			\draw[->,color=orange] (0)  edge[bend right] node[midway, below]{$1$}  (1) ;

			\draw[->,color=cyan] (3)  edge[bend right] node[midway, above]{$1$}  (2) ;
			\draw[->,color=orange] (2)  edge[bend right] node[midway, below]{$1$}  (3) ;

			\draw[->,color=cyan] (5)  edge[bend right] node[midway, above]{$1$}  (4) ;
			\draw[->,color=orange] (4)  edge[bend right] node[midway, below]{$1$}  (5) ;

		\end{tikzpicture}
	\end{center}
\end{example}

{
\subsubsection{Wild representation type}
}

Informally, an algebra has wild representation type if the category of its modules ``contains'' the category of representations of the free associative algebra in two generators. We refer the reader to \cite{Makedonskii2013} for exact definitions and a discussion of wild representation type in the Lie algebra context.

There are numerous possibilities for representations constructed according to \Cref{CL21reps_zeros}, including indecomposable representations that are  not simple.
\begin{example}\label{indecomp_nonsimple_CL21}
	Let $\rho(y)v_i = v_{i-1}$, as before, but now set $\mu(y)=0$. This representation is indecomposable but non-simple. Its subrepresentations are exactly $V_k=\ab{v_i \mid i \leq k}$ with $0~\leq~k~\leq~n$, which form a chain of inclusions and therefore cannot be part of a direct sum decomposition. To see that the $V_k$ are the only subrepresentations, we show by induction on $k$ that an arbitrary element $v=\sum_{i=0}^k a_iv_i$ generates $V_k$, as long as $a_k\neq 0$.
	
	The base case $k=0$ is trivial. Now suppose we have $v=\sum_{i=0}^k a_iv_i$ with $a_k \neq 0$ and $k \geq 1$. We have that $\rho(y)v = \sum_{i=1}^k a_iv_{i-1} =: \sum_{i=0}^{k-1}b_iv_i$, and since $a_k \neq 0$, the coefficient $b_{k-1} = a_k \neq 0$, allowing us to apply the induction step to conclude that $\rho(y)v$ generates $V_{k-1}$ as a representation. But then, $v_k = a_k^{-1}(v-~\sum_{i=0}^{k-1} a_iv_{i})$ is in the subrepresentation generated by $v$, so indeed this element generates $V_k$. 
	
    The diagram for this representation is as follows.
	
	\begin{center}
		\begin{tikzpicture}[node distance={25mm},thick,main/.style = {draw, circle,minimum size=1.1cm}] 
			\node[main] (0) {$v_0$}; 
			\node[main] (1) [right of=0] {$v_1$};
			\node[main] (2) [right of=1] {$v_{i-1}$};
			\node[main] (3) [right of=2] {$v_{i}$};
			\node[main] (4) [right of=3] {$v_{n-1}$};
			\node[main] (5) [right of=4] {$v_n$};

			\path (1) -- node{$\ldots$} (2);
			\path (3) -- node{$\ldots$} (4);
			
			\draw[->,color=cyan] (1)  edge[] node[midway, above]{$1$}  (0) ;

			\draw[->,color=cyan] (3)  edge[] node[midway, above]{$1$}  (2) ;

			\draw[->,color=cyan] (5)  edge[] node[midway, above]{$1$}  (4) ;

		\end{tikzpicture}
	\end{center}
\end{example}

\begin{rmk}
	The previous example does not provide a counterexample to Weyl's Theorem, as the compatible Lie algebra $CL_{2,1}$ is not simple. Nonetheless, there are representations of the simple algebra $CL_{2,4}$ with the same properties, see \Cref{sectionWeylCounter}.
\end{rmk}

{

\begin{theorem}
	The solvable two-dimensional compatible Lie algebras ($CL_{2,0}$, $CL_{2,1}$, $CL_{2,2}$ and $CL_{2,3}^\alpha, \alpha \in \KK^\times$) have wild representation type.
\end{theorem}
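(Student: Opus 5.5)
By \Cref{transportation_reps}, representations of $CL_{2,1}$, $CL_{2,2}$ and $CL_{2,3}^\alpha$ are in bijection, and the bijection preserves subrepresentations. I will also check that it preserves homomorphisms of representations: a linear map intertwining $\rho,\mu$ also intertwines $\lambda_1\rho+\lambda_2\mu$, and the inverse construction is of the same form. Hence the three module categories are equivalent, and it suffices to treat $CL_{2,0}$ and $CL_{2,1}$. I will handle both simultaneously using \Cref{CL21reps_zeros}.

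The plan is to exhibit a full (or at least representation-embedding) exact functor from finite-dimensional modules over the free algebra $\KK\langle a,b\rangle$ into representations of $CL_{2,1}$ (respectively $CL_{2,0}$). Given a $\KK\langle a,b\rangle$-module $M$ with $a,b$ acting by $A,B\in\End_\KK(M)$, set $V=M$, $\rho(x)=\mu(x)=0$, $\rho(y)=A$ and $\mu(y)=B$. By \Cref{CL21reps_zeros} this is a representation of both $CL_{2,0}$ and $CL_{2,1}$. On morphisms I take the identity on linear maps. A linear map $f\colon M\to M'$ commutes with $A$ and $B$ if and only if it commutes with $\rho(x),\rho(y),\mu(x),\mu(y)$, since the $x$-actions are zero. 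So the functor is fully faithful and exact. Being full, it preserves indecomposability (endomorphism rings coincide) and reflects isomorphism classes. Its image is in fact exactly the full subcategory of representations with $\rho(x)=\mu(x)=0$.

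This already fits the informal definition quoted above: the category of representations contains a copy of the category of finite-dimensional modules over the free algebra in two generators. To match the precise definition of wildness in \cite{Makedonskii2013}, I would phrase the construction as a $\KK\langle a,b\rangle$–$U$ bimodule. Here $U$ is the associative algebra governing representations, generated by $\rho(x),\rho(y),\mu(x),\mu(y)$ subject to \Crefrange{rep:id1}{rep:id3}. The bimodule is $\KK\langle a,b\rangle$ itself, with $U$ acting through the surjection $U\to\KK\langle a,b\rangle$ that sends $\rho(x),\mu(x)\mapsto 0$, $\rho(y)\mapsto a$, $\mu(y)\mapsto b$. Well-definedness of this surjection is exactly the content of \Cref{CL21reps_zeros}. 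The bimodule is free of finite rank on the right, so tensoring with it gives the required exact, indecomposability-preserving, isomorphism-reflecting functor.

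The main obstacle is not computational but definitional: making sure that the version of ``wild'' in the cited reference is satisfied by a full embedding induced by an algebra quotient. I expect it is, since restriction along a surjection of algebras is the standard example of a strict representation embedding. A second point to verify is the transfer to $CL_{2,2}$ and $CL_{2,3}^\alpha$. If a direct check is preferred, one can instead note that $\rho(x)=\mu(x)=0$ with $\rho(y),\mu(y)$ arbitrary also satisfies \Crefrange{rep:id1}{rep:id3} for these algebras. Every term in those identities involving a bracket lies in $\KK x$ and is killed, and every commutator on the right-hand side has a factor $\rho(x)$ or $\mu(x)$. So the same functor works verbatim for all of them.
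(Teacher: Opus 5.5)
Your proposal is correct and follows essentially the same route as the paper. You reduce to $CL_{2,0}$ and $CL_{2,1}$ via \Cref{transportation_reps}, then use the representations of \Cref{CL21reps_zeros} with $\rho(x)=\mu(x)=0$ to realise pairs of matrices up to simultaneous similarity, i.e.\ finite-dimensional $\KK\langle a,b\rangle$-modules, as a full subcategory. Your additions only make explicit what the paper asserts briefly: the check that the transport preserves morphisms, the observation that the embedding is restriction along a surjection $U\to\KK\langle a,b\rangle$ (hence fully faithful and exact), and the direct verification for $CL_{2,2}$ and $CL_{2,3}^\alpha$. One slip to fix is the side convention: the bimodule is free of rank one over $\KK\langle a,b\rangle$, not over $U$.
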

\begin{proof}
	It suffices to show this for $CL_{2,0}$ and $CL_{2,1}$, since \Cref{transportation_reps} then allows us to transport this result to $CL_{2,2}$, and $CL_{2,3}^\alpha, \alpha \in \KK^\times$. Note that two representations $(V,\rho,\mu)$ and $(V',\rho',\mu')$ as in \Cref{CL21reps_zeros} are isomorphic if and only if there exists a vector space isomorphism $\varphi$ from $V$ to $V'$ such that $\varphi^{-1}\rho'(y)\varphi = \rho(y)$ and $\varphi^{-1}\mu'(y)\varphi = \mu(y)$.
	
	Thus, the isomorphism problem for the representations $(V,\rho,\mu)$ of this form is equivalent to the simultaneous similarity classification problem for pairs of square matrices of the same size, which is wild (see \cite{Friedland1983} for an algorithmic approach to this problem).
	
	Specifically, given the free associative $\KK$-algebra $\ab{X,Y}$ on two generators and a representation $(V,\tau)$ of $\ab{X,Y}$, we obtain a representation $(V,\rho,\mu)$ of $CL_{2,0}$ and of $CL_{2,1}$ of the type discussed in \Cref{CL21reps_zeros} where $\rho(x)=\mu(x)=0$, $\rho(y)=\tau(X)$, and $\mu(y)=\tau(Y)$.

	This correspondence preserves simplicity and indecomposability, and is faithful on isomorphism classes, thus proving the claim.	
\end{proof}

}


We now present a family of representations that is not as trivial, in the sense that $\rho(x)$, $\rho(y)$, $\mu(x)$ and $\mu(y)$ are all nonzero.

\begin{example}
	Let $\rho$ and $\mu$ be defined by
	\begin{align*}
		\rho(x)v_i = v_{i-1}, \quad \rho(y)v_i = (\beta+i)v_i, \quad \mu(x)v_i = v_{i-1}, \quad \mu(y)v_i = v_{i-1} + \gamma v_i,
	\end{align*}
again, with the convention that $v_{-1}=0$. The diagram for this representation is as follows.

\begin{center}
	\begin{tikzpicture}[node distance={25mm},thick,main/.style = {draw, circle,minimum size=1.1cm}] 
		\node[main] (0) {$v_0$}; 
		\node[main] (1) [right of=0] {$v_1$};
		\node[main] (2) [right of=1] {$v_{i-1}$};
		\node[main] (3) [right of=2] {$v_{i}$};
		\node[main] (4) [right of=3] {$v_{n-1}$};
		\node[main] (5) [right of=4] {$v_n$};

		\path (1) -- node{$\ldots$} (2);
		\path (3) -- node{$\ldots$} (4);
		
		\draw[->,color=blue] (1)  edge[bend right] node[midway, above]{$1$}  (0) ;
		\draw[->,color=orange] (1)  edge[bend left] node[midway, below]{$1$}  (0) ;
		\draw[->,color=red] (1)  edge[] node[midway, above]{$1$}  (0) ;
		
		\draw[->,color=cyan] (0)  edge[loop above] node[midway, above]{$\beta$}  (0) ;
		\draw[->,color=orange] (0)  edge[loop below] node[midway, below]{$\gamma$}  (0) ;
		\draw[->,color=cyan] (1)  edge[loop above] node[midway, above]{$\beta+1$}  (1) ;
		\draw[->,color=orange] (1)  edge[loop below] node[midway, below]{$\gamma$}  (1) ;

		\draw[->,color=blue] (3)  edge[bend right] node[midway, above]{$1$}  (2) ;
		\draw[->,color=orange] (3)  edge[bend left] node[midway, below]{$1$}  (2) ;
		\draw[->,color=red] (3)  edge[] node[midway, above]{$1$}  (2) ;
		
		\draw[->,color=cyan] (2)  edge[loop above] node[midway, above]{$\beta+i-1$}  (2) ;
		\draw[->,color=orange] (2)  edge[loop below] node[midway, below]{$\gamma$}  (2) ;
		\draw[->,color=cyan] (3)  edge[loop above] node[midway, above]{$\beta+i$}  (3) ;
		\draw[->,color=orange] (3)  edge[loop below] node[midway, below]{$\gamma$}  (3) ;

		\draw[->,color=blue] (5)  edge[bend right] node[midway, above]{$1$}  (4) ;
		\draw[->,color=orange] (5)  edge[bend left] node[midway, below]{$1$}  (4) ;
		\draw[->,color=red] (5)  edge[] node[midway, above]{$1$}  (4) ;
		
		\draw[->,color=cyan] (4)  edge[loop above] node[midway, above]{$\beta+n-1$}  (4) ;
		\draw[->,color=orange] (4)  edge[loop below] node[midway, below]{$\gamma$}  (4) ;
		
		\draw[->,color=cyan] (5)  edge[loop above] node[midway, above]{$\beta+n$}  (5) ;
		\draw[->,color=orange] (5)  edge[loop below] node[midway, below]{$\gamma$}  (5) ;
		
	\end{tikzpicture}
\end{center}
We do not explicitly check that this is a representation. As in \Cref{indecomp_nonsimple_CL21}, this representation is indecomposable but non-simple, as its subrepresentations are exactly $V_k=\ab{v_i \mid i \leq k}$. By a straightforward eigenvalue argument, the isomorphism classes are parametrised by the pairs $(\beta,\gamma) \in \KK^2$.

\end{example}

\subsection{Representations of \texorpdfstring{$CL_{2,4}$}{CL24}}\label{repsCL2_sect}

In this section, we construct several representations of $CL_{2,4}$ satisfying different properties. A few of the families we construct provide a proof for~\Cref{Weylcounter}, see \Cref{sectionWeylCounter}.

To avoid excessive use of subscripts, the basis elements in \Cref{2d_comp_class} are renamed as $x$ and $y$, respectively, throughout this section.

\subsubsection{Families of counterexamples to Weyl's Theorem}\label{sectionWeylCounter}

In this section, we construct some families of finite-dimensional representations which are not irreducible, yet are indecomposable; hence, they are not semisimple.

\begin{example}\label{example_line1}
	
	Fixing a basis $\cb{v_0,\ldots, v_n}$ of a vector space $V$ of dimension $n+1$, the following is a representation of $CL_{2,4}$ (by convention, $v_{-1}=0$):
	\begin{align*}
		\rho(x) v_i = iv_{i-1}, &\quad  \rho(y)v_i=-(n-i)v_i, \quad \mu(x) v_i = (n-i)v_i, \quad  \mu(y)v_i=iv_{i-1}.
	\end{align*}
	
	The diagram for this representation is as follows:~
	\begin{center}
		\begin{tikzpicture}[node distance={25mm},thick,main/.style = {draw, circle,minimum size=1.1cm}] 
			\node[main] (0) {$v_0$}; 
			\node[main] (1) [right of=0] {$v_1$};
			\node[main] (2) [right of=1] {$v_{i-1}$};
			\node[main] (3) [right of=2] {$v_{i}$};
			\node[main] (4) [right of=3] {$v_{n-1}$};
			\node[main] (5) [right of=4] {$v_n$};

			\path (1) -- node{$\ldots$} (2);
			\path (3) -- node{$\ldots$} (4);
			
			\draw[->,color=blue] (1)  edge[bend right] node[midway, above]{$1$}  (0) ;
			\draw[->,color=orange] (1)  edge[bend left] node[midway, below]{$1$}  (0) ;
			
			\draw[->,color=cyan] (0)  edge[loop above] node[midway, above]{$-n$}  (0) ;
			\draw[->,color=red] (0)  edge[loop below] node[midway, below]{$n$}  (0) ;
			\draw[->,color=cyan] (1)  edge[loop above] node[midway, above]{$-n+1$}  (1) ;
			\draw[->,color=red] (1)  edge[loop below] node[midway, below]{$n-1$}  (1) ;

			\draw[->,color=blue] (3)  edge[bend right] node[midway, above]{$i$}  (2) ;
			\draw[->,color=orange] (3)  edge[bend left] node[midway, below]{$i$}  (2) ;
			
			\draw[->,color=cyan] (2)  edge[loop above] node[midway, above]{$-n+i+1$}  (2) ;
			\draw[->,color=red] (2)  edge[loop below] node[midway, below]{$n-i-1$}  (2) ;
			\draw[->,color=cyan] (3)  edge[loop above] node[midway, above]{$-n+i$}  (3) ;
			\draw[->,color=red] (3)  edge[loop below] node[midway, below]{$n-i$}  (3) ;

			\draw[->,color=blue] (5)  edge[bend right] node[midway, above]{$n$}  (4) ;
			\draw[->,color=orange] (5)  edge[bend left] node[midway, below]{$n$}  (4) ;
			
			\draw[->,color=cyan] (4)  edge[loop above] node[midway, above]{$-1$}  (4) ;
			\draw[->,color=red] (4)  edge[loop below] node[midway, below]{$1$}  (4) ;

		\end{tikzpicture}
	\end{center}
	It is clear from this diagram that the subspaces $V_k=\ab{v_i \mid i \leq k}$ are subrepresentations: no arrows leave these sets, meaning that there are no elements in $V_k$ which get mapped to $V\setminus V_k$.
	
	Indeed, these are the only subrepresentations, which implies that the representation $V$ is reducible for $n\geq 1$, but indecomposable, giving us the following. 
	
	\begin{prop}\label{redindec1}
		The subrepresentations of $V$ are precisely the $V_k$ described above, whence $V$ is reducible and indecomposable for $n\geq 1$.
	\end{prop}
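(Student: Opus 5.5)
We need to show that every nonzero subrepresentation $W$ of $V$ equals some $V_k$. The plan is to use the operator $\mu(x)$, which is diagonal with eigenvalues $n-i$ on $v_i$. Provided these eigenvalues are pairwise distinct, a subspace invariant under $\mu(x)$ is spanned by the eigenvectors it contains. Distinctness holds if the characteristic of $\KK$ is $0$ or larger than $n$. In small positive characteristic some coefficients $i$ vanish, and the argument needs adjusting; we would assume characteristic $0$, as in the Weyl setting the counterexample targets.

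The key steps are as follows.

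First, observe that $\rho(y)+\mu(x)=0$ on each $v_i$, so $\mu(x)$ alone suffices for the diagonalisation. Take $w=\sum_i a_i v_i\in W$. Applying polynomials in $\mu(x)$ and using Lagrange interpolation with the distinct eigenvalues $n-i$, we extract each component: $a_iv_i\in W$. Hence $W$ is spanned by the basis vectors $v_i$ that it contains.

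Second, let $k$ be the largest index with $v_k\in W$. Since $\rho(x)v_i=iv_{i-1}$ and $i\neq 0$ for $1\le i\le n$, repeated application of $\rho(x)$ to $v_k$ gives $v_{k-1},\dots,v_0\in W$. Therefore $W\supseteq V_k$, and by maximality of $k$ together with the first step, $W=V_k$.

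Third, each $V_k$ is indeed a subrepresentation, as read off from the diagram. Every operator sends $v_i$ into $\operatorname{span}(v_{i-1},v_i)$.

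Consequently the subrepresentations form the chain $0\subsetneq V_0\subsetneq V_1\subsetneq\cdots\subsetneq V_n=V$. For $n\ge1$, $V_0$ is a proper nonzero subrepresentation, so $V$ is reducible. If $V=A\oplus B$ with $A,B$ nonzero subrepresentations, then $A=V_a$ and $B=V_b$, so one contains the other. Their intersection is then nonzero, a contradiction. Hence $V$ is indecomposable.

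The only delicate point is the first step, which needs the eigenvalues $n-i$ to be distinct in $\KK$; the same characteristic hypothesis gives the nonvanishing of $i$ used in the second step. Everything else is a direct check.
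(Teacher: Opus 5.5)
Your proof is correct under your hypothesis (characteristic $0$, or more generally characteristic $>n$), but it takes a different route from the paper. The paper reuses the argument of Example~\ref{indecomp_nonsimple_CL21}. By induction on $k$, any $v=\sum_{i\le k}a_iv_i$ with $a_k\neq 0$ generates $V_k$: the $v_{k-1}$-coefficient of $\rho(x)v$ is $ka_k\neq 0$, so $\rho(x)v$ generates $V_{k-1}$, and then $v_k=a_k^{-1}\bigl(v-\sum_{i<k}a_iv_i\bigr)$ also lies in the subrepresentation generated by $v$. Taking $k$ to be the largest index with nonzero coefficient in some element of $W$ then gives $W=V_k$.

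That argument uses only the lowering operator and never the diagonal ones. This is why the same proof serves both here and for $CL_{2,1}$, where $\rho(y)$ is the only nonzero operator and your eigenvalue step is unavailable. Your argument instead first uses the distinct eigenvalues $n-i$ of $\mu(x)$ to show that every subrepresentation is a coordinate subspace. This turns the paper's diagrammatic remark that no arrows leave $V_k$ into the whole proof: the problem reduces to downward-closedness of an index set. The extra input you need, distinctness of the eigenvalues, is here equivalent to $1,\dots,n$ being nonzero in $\KK$. That is the same condition the paper's induction tacitly uses.

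One correction to your framing: in characteristic $p\le n$ it is not just the argument that fails but the statement itself. Since $\rho(x)v_p=\mu(y)v_p=0$, the span $\langle v_p,\dots,v_n\rangle$ is a subrepresentation, and $V=V_{p-1}\oplus\langle v_p,\dots,v_n\rangle$ is decomposable. So your characteristic hypothesis is genuinely necessary, not just convenient for the argument.
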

	
	\begin{proof}
%
%
{The proof of this result follows the same argument as in the discussion of \Cref{indecomp_nonsimple_CL21}.}
\end{proof}
	
\end{example}

\begin{example}\label{example_line2}
	A similar representation can be obtained by considering a basis ${v_0,\ldots, v_n}$ of a vector space $V$ of dimension $n+1$ and setting
	\begin{align*}
		\rho(x) v_i = iv_{i-1}, &\quad  \rho(y)v_i=iv_i, \quad \mu(x) v_i = -iv_i, \quad  \mu(y)v_i=iv_{i-1}.
	\end{align*}
	The diagram for this representation is as follows
	\begin{center}
		\begin{tikzpicture}[node distance={25mm},thick,main/.style = {draw, circle,minimum size=1.1cm}] 
			\node[main] (0) {$v_0$}; 
			\node[main] (1) [right of=0] {$v_1$};
			\node[main] (2) [right of=1] {$v_{i-1}$};
			\node[main] (3) [right of=2] {$v_{i}$};
			\node[main] (4) [right of=3] {$v_{n-1}$};
			\node[main] (5) [right of=4] {$v_n$};

			\path (1) -- node{$\ldots$} (2);
			\path (3) -- node{$\ldots$} (4);
			
			\draw[->,color=blue] (1)  edge[bend right] node[midway, above]{$1$}  (0) ;
			\draw[->,color=orange] (1)  edge[bend left] node[midway, below]{$1$}  (0) ;
			
			\draw[->,color=cyan] (5)  edge[loop above] node[midway, above]{$n$}  (5) ;
			\draw[->,color=red] (5)  edge[loop below] node[midway, below]{$-n$}  (5) ;
			\draw[->,color=cyan] (1)  edge[loop above] node[midway, above]{$1$}  (1) ;
			\draw[->,color=red] (1)  edge[loop below] node[midway, below]{$-1$}  (1) ;

			\draw[->,color=blue] (3)  edge[bend right] node[midway, above]{$i$}  (2) ;
			\draw[->,color=orange] (3)  edge[bend left] node[midway, below]{$i$}  (2) ;
			
			\draw[->,color=cyan] (2)  edge[loop above] node[midway, above]{$i-1$}  (2) ;
			\draw[->,color=red] (2)  edge[loop below] node[midway, below]{$-i+1$}  (2) ;
			\draw[->,color=cyan] (3)  edge[loop above] node[midway, above]{$i$}  (3) ;
			\draw[->,color=red] (3)  edge[loop below] node[midway, below]{$-i$}  (3) ;

			\draw[->,color=blue] (5)  edge[bend right] node[midway, above]{$n$}  (4) ;
			\draw[->,color=orange] (5)  edge[bend left] node[midway, below]{$n$}  (4) ;
			
			\draw[->,color=cyan] (4)  edge[loop above] node[midway, above]{$n-1$}  (4) ;
			\draw[->,color=red] (4)  edge[loop below] node[midway, below]{$-n+1$}  (4) ;

		\end{tikzpicture}
	\end{center}
	and this also satisfies \Cref{redindec1}.
	
\end{example}

\begin{example}

	The previous examples can be generalised for any finite rooted tree. Let $T$ be a finite tree with root $v_0$. To abbreviate notation, denote by $|v|$ the distance from the vertex $v$ to $v_0$ (so $d(v,v_0)=|v|$). If $V$ is the vector space formally generated by all the vertices of $T$, we can give it a representation structure which generalises \Cref{example_line1}, by setting
	\begin{align*}
		\rho(x) v = |v|\ell(v), &\quad  \rho(y)v=(-n+|v|)v, \quad \mu(x) v =(n-|v|)v, \quad  \mu(y)v=|v|\ell(v), 
	\end{align*}
	where $n=\max\cb{|v| \text{ for } v \in V(T)}$ is the maximum distance from $v_0$ attained in $T$ and $\ell(v)$ is the (only) vertex which satisfies $|\ell(v)|=|v|-1$, for $v \neq v_0$, and $\ell(v_0)=0$.\footnote{The notation comes from the fact that by looking at the tree horizontally with the root on the left and the tree ``growing'' to the right, $\ell(v)$ is the only neighbour of $v$ exactly one position to the left.}
	
	Again, using the diagrams, this means that any rooted tree can define a representation if it has the following form between two adjacent vertices:
	\begin{center}
		\begin{tikzpicture}[node distance={25mm},thick,main/.style = {draw, circle,minimum size=1.1cm}] 
			\node[main] (0) {$\ell(v)$}; 
			\node[main] (1) [right of=0] {$v$};

			\draw[->,color=blue] (1)  edge[bend right] node[midway, above]{$|v|$}  (0) ;
			\draw[->,color=orange] (1)  edge[bend left] node[midway, below]{$|v|$}  (0) ;
			
			\draw[->,color=cyan] (0)  edge[loop above] node[midway, above]{$-n+|v|-1$}  (0) ;
			\draw[->,color=red] (0)  edge[loop below] node[midway, below]{$n-|v|+1$}  (0) ;
			\draw[->,color=cyan] (1)  edge[loop above] node[midway, above]{$-n+|v|$}  (1) ;
			\draw[->,color=red] (1)  edge[loop below] node[midway, below]{$+n-|v|$}  (1) ;
			
		\end{tikzpicture}
	\end{center}
	
	By setting instead 
	\begin{align*}
		\rho(x) v = |v|\ell(v), &\quad  \rho(y)v=|v|v, \quad \mu(x) v =-|v|v, \quad  \mu(y)v=|v|\ell(v), 
	\end{align*}
	one obtains a representation which generalises \Cref{example_line2} to (possibly infinite) rooted trees.

	By using arguments similar to the proof of \Cref{redindec1}, one can see that the subrepresentations of the representations given by a rooted tree $T$ are precisely the subtrees of $T$ with the same root. In this case, it is no longer true that the subrepresentations are contained in one another, but it remains true that there are no complements for any proper nontrivial subrepresentation, as all nontrivial subrepresentations contain the root, and thus, these representations are also reducible and indecomposable.

\end{example}

\subsubsection{Infinite-dimensional representations}

Throughout this subsection, we will assume the base field $\KK$ has characteristic $0$ and identify $\ZZ$ with a subring of $\KK$. We will now explore some infinite-dimensional representations of $CL_{2,4}$.

\begin{rmk}
	Let $\cb{v_n}_{n \in \ZZ}$ be a set of basis vectors indexed by the integers and let $\lambda \in \KK$. We can see that setting
	\begin{align*}
		\rho(x) v_n = (n+\lambda)v_{n-1}, &\quad  \rho(y)v_n=(n+\lambda)v_n, \text{ for all } n \in \ZZ
	\end{align*}
	yields a representation of the non-abelian Lie algebra of dimension $2$ given by generators $x$ and $y$ with product $\sqb{x,y}=x$. This representation has different properties depending on whether the parameter $\lambda$ is an integer. 
\end{rmk}

We now present two different ways of using this representation to construct representations for $CL_{2,4}$.

\begin{example}\label{infinite_rep_same}
	
	We combine two copies of this representation, both going in the same direction (i.e.~with $\rho(x)$ shifting downward in both), to obtain an infinite-dimensional representation $V$ of $CL_{2,4}$ reminiscent of the finite-dimensional ones in \Cref{sectionWeylCounter}. In order for the mixed representation identity to hold, a sign change is required in $\mu(x)$. With this adjustment, the representation is:
	\begin{align*}
		\rho(x) v_n = (n+\lambda)v_{n-1}, &\quad  \rho(y)v_n=(n+\lambda)v_n, \quad \mu(x) v_n = -(n+\lambda)v_{n}, \quad  \mu(y)v_n=(n+\lambda)v_{n-1}.
	\end{align*}
	Graphically, it looks like this:
	
	\begin{center}
		\begin{tikzpicture}[node distance={25mm},thick,main/.style = {draw, circle,minimum size=1.1cm}] 
			\node[main] (1) {$v_{n-1}$}; 
			\node[main] (2) [right of=1] {$v_n$};
			\node[main] (3) [right of=2] {$v_{n+1}$};
			\node (4) [right of=3] {};
			\node (0) [left of=1] {};

			\path (3) -- node{$\ldots$} (4);
			\path (1) -- node{$\ldots$} (0);
			
			\draw[->,color=blue] (2)  edge[bend right] node[midway, above]{$n+\lambda$}  (1) ;
			\draw[->,color=orange] (2)  edge[bend left] node[midway, below]{$n+\lambda$}  (1) ;
			
			\draw[->,color=blue] (3)  edge[bend right] node[midway, above]{$n+1+\lambda$}  (2) ;
			\draw[->,color=orange] (3)  edge[bend left] node[midway, below]{$n+1+\lambda$}  (2) ;

			\draw[->,color=cyan] (3)  edge[loop above] node[midway, above]{$n+1+\lambda$}  (3) ;
			\draw[->,color=red] (3)  edge[loop below] node[midway, below]{$-(n+1+\lambda)$}  (3) ;
			
			\draw[->,color=cyan] (1)  edge[loop above] node[midway, above]{$n-1+\lambda$}  (1) ;
			\draw[->,color=red] (1)  edge[loop below] node[midway, below]{$-(n-1+\lambda)$}  (1) ;
			
			\draw[->,color=cyan] (2)  edge[loop above] node[midway, above]{$n+\lambda$}  (2) ;
			\draw[->,color=red] (2)  edge[loop below] node[midway, below]{$-(n+\lambda)$}  (2) ;

		\end{tikzpicture}
	\end{center}
	
\end{example}

	We can divide this family of representations into two cases, depending on whether $\lambda$ is an integer or not, with the resulting families having very distinct properties.
	
	\underline{Case $\lambda \not\in \ZZ$}: In this case, similarly to \Cref{example_line1,example_line2}, the subrepresentations are of the form $V_k=\ab{v_i \mid i \leq k}$, where, in this case, we have a doubly infinite chain of inclusions. 
	
	\begin{prop}\label{redindecinfinite1}
		The subrepresentations of $V$ (as defined in \Cref{infinite_rep_same}) are precisely the $V_k$ described above, whence $V$ is reducible and indecomposable.
	\end{prop}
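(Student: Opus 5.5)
The plan is to use $\rho(y)$ to show that every subrepresentation is spanned by basis vectors, and then use $\rho(x)$ to show that the set of basis vectors it contains is closed downward. This replaces the inductive argument of \Cref{indecomp_nonsimple_CL21}, which started from a top index and so does not apply to a doubly infinite basis. Throughout, $\lambda\notin\ZZ$ and $\KK$ has characteristic $0$. Hence $n+\lambda\neq 0$ for every $n\in\ZZ$, and the scalars $n+\lambda$ are pairwise distinct.

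\emph{Step 1 (subrepresentations are spanned by basis vectors).} Let $W$ be a subrepresentation and $0\neq w\in W$. Write $w=\sum_{i\in F}a_iv_i$ with $F\subset\ZZ$ finite and all $a_i\neq 0$. Since $\rho(y)v_i=(i+\lambda)v_i$, the vector $w$ is a finite sum of eigenvectors of $\rho(y)$ with pairwise distinct eigenvalues. The vectors $\rho(y)^k w$, for $0\le k<|F|$, all lie in $W$. The coefficient matrix expressing them in terms of the $a_iv_i$ is a Vandermonde matrix in the distinct values $i+\lambda$, so it is invertible. Equivalently, apply the polynomial $\prod_{j\in F,\, j\neq i}(\rho(y)-(j+\lambda))$ to $w$, which gives a nonzero multiple of $v_i$. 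Either way, each $v_i$ with $i\in F$ lies in $W$. Therefore $W=\ab{v_i\mid i\in S}$, where $S=\cb{i\in\ZZ\mid v_i\in W}$.

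\emph{Step 2 ($S$ is closed downward).} If $i\in S$, then $\rho(x)v_i=(i+\lambda)v_{i-1}\in W$ with $i+\lambda\neq 0$, so $i-1\in S$. Hence $S$ is a downward-closed subset of $\ZZ$. Such a set is either empty, all of $\ZZ$, or $\cb{i\mid i\le k}$ with $k=\max S$. These give $W=0$, $W=V$ and $W=V_k$ respectively, reading the family "$V_k$" as including the trivial cases $V_{-\infty}=0$ and $V_{\infty}=V$.

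\emph{Step 3 (converse and conclusion).} Each $V_k$ is a subrepresentation. Indeed, $\rho(y)$ and $\mu(x)$ act diagonally, while $\rho(x)$ and $\mu(y)$ send $v_i$ to a multiple of $v_{i-1}$, so every operator preserves $V_k$. For instance $V_0$ is a proper nonzero subrepresentation, so $V$ is reducible. The subrepresentations form a chain, so any two nonzero ones intersect nontrivially. Hence $V$ cannot be written as a direct sum of two nonzero subrepresentations, and $V$ is indecomposable.

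The only delicate point is Step 1. It needs the eigenvalues $i+\lambda$ to be distinct, which holds in characteristic $0$. The nonvanishing of $i+\lambda$, which is exactly where $\lambda\notin\ZZ$ enters, is what makes Step 2 work.
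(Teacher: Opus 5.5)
Your proof is correct and uses the same two facts as the paper's: the pairwise distinct, nonzero $\rho(y)$-eigenvalues $i+\lambda$ let you separate the components of a vector, and the nonzero coefficients of $\rho(x)$ let you move downward. The paper removes the lowest-index term one step at a time until only $v_k$ remains, so each element generates $V_k$ for its top index $k$. Your interpolation argument instead extracts all components at once, and the downward-closed index set $S$ then makes explicit a final step the paper leaves implicit: that an arbitrary, possibly infinite-dimensional, subrepresentation is some $V_k$, $0$ or $V$.
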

	
	\begin{proof}
		It is clear that $v_k$ generates $V_k$ by successive applications of $\rho(x)$. We will now show that any linear combination $\sum_{i=j}^{k} a_i v_i$ with $a_j,a_k \neq 0$ and $j \leq k$ can be reduced to $v_k$, and thus generates $V_k$. We will show this by induction on $k-j$.
		
		The base case is when $j=k$, which is trivial. Now assume that any linear combination of the form $\sum_{i=j+1}^{k} a_i v_i$ with $a_{j+1}, a_k \neq 0$ 
		can be reduced (via linear combinations and application of representation maps) to $v_k$ and let $v=\sum_{i=j}^{k} a_i v_i$, with $a_j,a_k \neq 0$ and $k-j \geq 1$. We define 
		\[v':=\frac{\rho(y)v}{j+\lambda}=\frac{1}{j+\lambda}\sum_{i=j}^{k} (i+\lambda)a_i v_i  = a_jv_j + \sum_{i=j+1}^{k} \frac{i+\lambda}{j+\lambda}a_i v_i, \]
		which in turn allows us to define
		\[
		w:= v-v' = \sum_{i=j+1}^{k} a_i\big(1-\frac{i+\lambda}{j+\lambda}\big) v_i.
		\]
	We can apply the induction hypothesis to this vector, since $k \neq j$ implies that $1-\frac{k+\lambda}{j+\lambda}\neq 0$. This concludes the proof, as we have reduced $v$ to $w$ and in turn $w$ to $v_k$.
	\end{proof}

	We now consider the other case.
	
	\underline{Case $\lambda \in \ZZ$}: Without loss of generality, we may consider $\lambda = 0$, in which case, the diagram for the representation is as follows.
	\begin{center}
		\resizebox{0.88\textwidth}{!}{
			\begin{tikzpicture}[node distance={25mm},thick,main/.style = {draw, circle,minimum size=1.1cm}] 
				\node[main] (0) {$v_{-2}$}; 
				\node[main] (1) [right of=0] {$v_{-1}$};
				\node[main] (2) [right of=1] {$v_{0}$};
				\node[main] (3) [right of=2] {$v_{1}$};
				\node[main] (4) [right of=3] {$v_{2}$};
				\node (5) [right of=4] {};
				\node (6) [left of=0]{};
				
				\path (4) -- node{$\ldots$} (5);
				\path (0) -- node{$\ldots$} (6);

				\draw[->,color=blue] (3)  edge[bend right] node[midway, above]{$1$}  (2) ;
				\draw[->,color=orange] (3)  edge[bend left] node[midway, below]{$1$}  (2) ;
				
				\draw[->,color=cyan] (0)  edge[loop above] node[midway, above]{$-2$}  (0) ;
				\draw[->,color=red] (0)  edge[loop below] node[midway, below]{$2$}  (0) ;
				
				\draw[->,color=cyan] (1)  edge[loop above] node[midway, above]{$-1$}  (1) ;
				\draw[->,color=red] (1)  edge[loop below] node[midway, below]{$1$}  (1) ;

				\draw[->,color=blue] (1)  edge[bend right] node[midway, above]{$-1$}  (0) ;
				\draw[->,color=orange] (1)  edge[bend left] node[midway, below]{$-1$}  (0) ;
				
				\draw[->,color=cyan] (3)  edge[loop above] node[midway, above]{$1$}  (3) ;
				\draw[->,color=red] (3)  edge[loop below] node[midway, below]{$-1$}  (3) ;

				\draw[->,color=blue] (4)  edge[bend right] node[midway, above]{$2$}  (3) ;
				\draw[->,color=orange] (4)  edge[bend left] node[midway, below]{$2$}  (3) ;
				
				\draw[->,color=cyan] (4)  edge[loop above] node[midway, above]{$2$}  (4) ;
				\draw[->,color=red] (4)  edge[loop below] node[midway, below]{$-2$}  (4) ;

		\end{tikzpicture}}
	\end{center}
	Using the same notation as above, we may see that the subspaces $V_k$ are still subrepresentations of $V$, but this time they are not the only ones. In fact, we have that $V^+:=\ab{v_i \mid 0 \leq i}$ and $V_k^+:=\ab{v_i \mid 0 \leq i \leq k}$ are also subrepresentations of $V$. In particular, $V$ can be decomposed into $V=V_{-1}\oplus V^+$, and so $V$ is decomposable.
	
	What remains true is that each component $V_{-1}$ and $V^+$ is indecomposable, the first one by the same argument as the case where $\lambda \not\in \ZZ$ and the second by the argument used in \Cref{redindec1}. In fact, $V^+$ is an infinite-dimensional generalisation of \Cref{example_line2}, and each of its subrepresentations $V_k^+$ is isomorphic to each representation in \Cref{example_line2}.

\begin{example}\label{infinite_rep_opposite}
	We may also ``glue'' representations going ``in opposite ways'', in which case we obtain
	\begin{align*}
		\rho(x) v_n = (n+\lambda)v_{n-1}, &\quad  \rho(y)v_n=(n+\lambda)v_n, \quad \mu(x) v_n = (n+\lambda)v_{n}, \quad  \mu(y)v_n=(n+\lambda)v_{n+1}.
	\end{align*}
	Graphically, it looks like this:
	
	\begin{center}
		\begin{tikzpicture}[node distance={25mm},thick,main/.style = {draw, circle,minimum size=1.1cm}] 
			\node[main] (1) {$v_{n-1}$}; 
			\node[main] (2) [right of=1] {$v_n$};
			\node[main] (3) [right of=2] {$v_{n+1}$};
			\node (4) [right of=3] {};
			\node (0) [left of=1] {};

			\path (3) -- node{$\ldots$} (4);
			\path (1) -- node{$\ldots$} (0);
			
			\draw[->,color=blue] (2)  edge[bend right] node[midway, above]{$n+\lambda$}  (1) ;
			\draw[->,color=orange] (1)  edge[bend right] node[midway, below]{$n-1+\lambda$}  (2) ;
			
			\draw[->,color=blue] (3)  edge[bend right] node[midway, above]{$n+1+\lambda$}  (2) ;
			\draw[->,color=orange] (2)  edge[bend right] node[midway, below]{$n+\lambda$}  (3) ;

			\draw[->,color=cyan] (3)  edge[loop above] node[midway, above]{$n+1+\lambda$}  (3) ;
			\draw[->,color=red] (3)  edge[loop below] node[midway, below]{$n+1+\lambda$}  (3) ;
			
			\draw[->,color=cyan] (1)  edge[loop above] node[midway, above]{$n-1+\lambda$}  (1) ;
			\draw[->,color=red] (1)  edge[loop below] node[midway, below]{$n-1+\lambda$}  (1) ;
			
			\draw[->,color=cyan] (2)  edge[loop above] node[midway, above]{$n+\lambda$}  (2) ;
			\draw[->,color=red] (2)  edge[loop below] node[midway, below]{$n+\lambda$}  (2) ;

		\end{tikzpicture}
	\end{center}
	
	As before, there are two cases to consider, depending on whether $\lambda$ is an integer or not.
	
	\underline{Case $\lambda \not\in \ZZ$}: It is clear that given a basis vector $v_k$ one can generate the whole representation by applying $\rho(x)$ to attain basis vectors $v_j$ with $j<k$ and  $\mu(y)$ to attain basis vectors $v_j$ with $j>k$. This works since $\rho(x)v_j$ and $\mu(y)v_j$ are never zero, as $\lambda \not\in \ZZ$.
	
	Using the proof of \Cref{redindecinfinite1}, one concludes that it is possible to reduce any nonzero linear combination of the basis vectors to a single basis element, whence any nonzero element generates $V$ and we have the following.
	
	\begin{prop}
		The infinite-dimensional representation defined in \Cref{infinite_rep_opposite} is irreducible whenever the parameter $\lambda \not\in \ZZ$.
	\end{prop}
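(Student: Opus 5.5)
The plan is to show that every nonzero subrepresentation $W \subseteq V$ contains some basis vector $v_k$, and that every basis vector generates all of $V$. Together these give $W = V$.

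For the first part, take a nonzero $v \in W$ and write it as a finite sum $v=\sum_{i=j}^{k} a_i v_i$ with $a_j, a_k \neq 0$. We then run the same induction on $k-j$ as in the proof of \Cref{redindecinfinite1}. Here $\rho(y)$ is again diagonal, $\rho(y)v_i=(i+\lambda)v_i$. Since $\lambda\notin\ZZ$, every eigenvalue $i+\lambda$ is nonzero, and eigenvalues for distinct $i$ are distinct. Set
\[
w := v - \frac{\rho(y)v}{j+\lambda} = \sum_{i=j+1}^{k} a_i\Bigl(1-\frac{i+\lambda}{j+\lambda}\Bigr)v_i .
\]
This vector lies in $W$, has strictly shorter support, and keeps a nonzero coefficient at $v_k$. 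Iterating, we reach a nonzero multiple of $v_k$ inside $W$, so $v_k \in W$. Only the diagonal operator $\rho(y)$ is used here, so the change of $\mu(y)$ from the previous example plays no role.

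For the second part, we use the two shift operators. For $j<k$, applying $\rho(x)$ repeatedly gives
\[
\rho(x)^{k-j} v_k = \Bigl(\prod_{i=j+1}^{k}(i+\lambda)\Bigr) v_j .
\]
For $j>k$, applying $\mu(y)$ repeatedly gives
\[
\mu(y)^{j-k} v_k = \Bigl(\prod_{i=k}^{j-1}(i+\lambda)\Bigr) v_j .
\]
Since $\lambda\notin\ZZ$ and $\ZZ\subseteq\KK$ with characteristic $0$, none of the factors $i+\lambda$ vanishes. So each product is a nonzero scalar, every $v_j$ lies in $W$, and therefore $W=V$.

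The only step that needs care is the nonvanishing of the scalars $i+\lambda$ and of the differences $(j+\lambda)-(i+\lambda)=j-i$. Both follow from $\lambda\notin\ZZ$ and characteristic $0$. No other obstacle is expected. As in the rest of the paper, I would take for granted that the maps in \Cref{infinite_rep_opposite} satisfy \Crefrange{rep:id1}{rep:id3}.
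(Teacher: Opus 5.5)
Your proposal is correct and follows essentially the same route as the paper. The paper also reduces an arbitrary nonzero vector to a single basis vector with the diagonal $\rho(y)$-trick from the proof of \Cref{redindecinfinite1}. It then generates all of $V$ from $v_k$ using $\rho(x)$ downwards and $\mu(y)$ upwards, whose coefficients $i+\lambda$ never vanish because $\lambda\notin\ZZ$; your induction on the support length is, if anything, stated slightly more carefully than the paper's.
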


	\underline{Case $\lambda \in \ZZ$}: Again, we may assume in this case that $\lambda =0$. The diagram for the representation is the following.
	\begin{center}
		\resizebox{0.88\textwidth}{!}{
			\begin{tikzpicture}[node distance={25mm},thick,main/.style = {draw, circle,minimum size=1.1cm}] 
				\node[main] (0) {$v_{-2}$}; 
				\node[main] (1) [right of=0] {$v_{-1}$};
				\node[main] (2) [right of=1] {$v_{0}$};
				\node[main] (3) [right of=2] {$v_{1}$};
				\node[main] (4) [right of=3] {$v_{2}$};
				\node (5) [right of=4] {};
				\node (6) [left of=0]{};
				
				\path (4) -- node{$\ldots$} (5);
				\path (0) -- node{$\ldots$} (6);

				\draw[->,color=blue] (3)  edge[bend right] node[midway, above]{$1$}  (2) ;
				\draw[->,color=orange] (1)  edge[bend right] node[midway, below]{$-1$}  (2) ;
				
				\draw[->,color=cyan] (0)  edge[loop above] node[midway, above]{$-2$}  (0) ;
				\draw[->,color=red] (0)  edge[loop below] node[midway, below]{$-2$}  (0) ;
				
				\draw[->,color=cyan] (1)  edge[loop above] node[midway, above]{$-1$}  (1) ;
				\draw[->,color=red] (1)  edge[loop below] node[midway, below]{$-1$}  (1) ;

				\draw[->,color=blue] (1)  edge[bend right] node[midway, above]{$-1$}  (0) ;
				\draw[->,color=orange] (0)  edge[bend right] node[midway, below]{$-2$}  (1) ;
				
				\draw[->,color=cyan] (3)  edge[loop above] node[midway, above]{$1$}  (3) ;
				\draw[->,color=red] (3)  edge[loop below] node[midway, below]{$1$}  (3) ;

				\draw[->,color=blue] (4)  edge[bend right] node[midway, above]{$2$}  (3) ;
				\draw[->,color=orange] (3)  edge[bend right] node[midway, below]{$1$}  (4) ;
				
				\draw[->,color=cyan] (4)  edge[loop above] node[midway, above]{$2$}  (4) ;
				\draw[->,color=red] (4)  edge[loop below] node[midway, below]{$2$}  (4) ;

		\end{tikzpicture}}
	\end{center}
	It is clear from the diagram that now there are at least three subrepresentations, namely $\ab{v_0}$, $V^+:=\ab{v_i \mid i \geq 0}$ and $V^-:=\ab{v_i \mid i \leq 0}$.
	
	It can be seen that any basis vector $v_k$ other than $v_0$ generates $V^+$ or $V^-$ depending on whether $k$ is positive or negative. 
	
	Any linear combination of basis vectors $\sum_{i=0}^k a_iv_i$ with $k > 0$ and $a_k \neq 0$ can be reduced to an element of the form $b_0v_0+b_1v_1 $ by applying $\rho(x)^{k-1}$ and to $b_0v_0$ by applying $\rho(x)^{k}$ and possibly scaling, and thus to their difference $b_1v_1$. Thus, any element of the form $\sum_{i=0}^k a_iv_i$ with $k>0$ and $a_k \neq 0$ generates $V^+$.
	
	Similarly, by applying $\mu(y)$, any element of the form $\sum_{i=k}^0 a_iv_i$ with $k<0$ and $a_{k} \neq 0$ generates $V^-$ and any element of the form $\sum_{i=\ell}^k a_iv_i$ with $\ell < 0 < k$ and $a_{k}, a_\ell \neq 0$ generates the whole representation.
	
	We have thus proven the following.
	
	\begin{prop}
		The infinite-dimensional representation $V$ defined in \Cref{infinite_rep_opposite} has exactly $3$ nontrivial subrepresentations whenever $\lambda \in \ZZ$, namely $\ab{v_0}$, $V^+$ and $V^-$ as defined above. Thus, $V$ is reducible and indecomposable.
	\end{prop}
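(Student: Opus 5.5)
The plan is to exploit the fact that $\rho(y)$ acts diagonally with pairwise distinct eigenvalues. With $\lambda=0$, we have $\rho(y)v_n=nv_n$. Since $\KK$ has characteristic $0$, the integers $n\in\ZZ$ are pairwise distinct in $\KK$, so each $v_n$ spans the full $n$-eigenspace of $\rho(y)$.

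\textbf{Step 1: every subrepresentation is spanned by basis vectors.} Let $W$ be a subrepresentation and $w=\sum_{i\in F}a_iv_i\in W$, with $F$ finite and all $a_i\neq 0$. For each $k\in F$, the Lagrange interpolation polynomial $p_k(t)=\prod_{i\in F\setminus\{k\}}\frac{t-i}{k-i}$ satisfies $p_k(\rho(y))w=a_kv_k$. Hence $v_k\in W$ for every $k\in F$, and this is exactly the eigenvalue-separation trick already used in the proof of \Cref{redindecinfinite1}. It follows that $W=\ab{v_i\mid i\in S}$ for some $S\subseteq\ZZ$. Conversely, such a span is automatically invariant under $\rho(y)$ and $\mu(x)$, which are both diagonal. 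So $W$ is a subrepresentation precisely when $S$ is stable under the moves induced by $\rho(x)$ and $\mu(y)$.

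\textbf{Step 2: determine the admissible sets $S$.} From $\rho(x)v_n=nv_{n-1}$ and $\mu(y)v_n=nv_{n+1}$, a basis vector $v_n$ with $n\neq 0$ forces both $v_{n-1}$ and $v_{n+1}$ into $W$, while $v_0$ is killed by both maps. Thus the stability conditions on $S$ are exactly:
\begin{itemize}
\item $n\in S$ with $n\neq 0$ implies $n\pm 1\in S$;
\item $0\in S$ imposes nothing.
\end{itemize}
Starting from some $n>0$ in $S$, we propagate downward to $0$ and upward without bound, so $S\supseteq\ZZ_{\geq 0}$. Symmetrically, some $n<0$ in $S$ forces $S\supseteq\ZZ_{\leq 0}$. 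Therefore $S$ is one of $\emptyset$, $\{0\}$, $\ZZ_{\geq0}$, $\ZZ_{\leq0}$ or $\ZZ$. Each of these is indeed stable, so they are all the possibilities.

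\textbf{Step 3: conclude.} The sets $\{0\}$, $\ZZ_{\geq0}$ and $\ZZ_{\leq0}$ give exactly the three nontrivial proper subrepresentations $\ab{v_0}$, $V^+$ and $V^-$. In particular $V$ is reducible. Every nonzero subrepresentation contains $v_0$, so no two nonzero subrepresentations intersect trivially. Hence $V$ admits no nontrivial direct sum decomposition and is indecomposable.

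The only step needing care is Step~1, the reduction to spans of basis vectors. It relies on the eigenvalues $n\in\ZZ$ remaining distinct in $\KK$, which is guaranteed by the characteristic $0$ assumption of this subsection. Everything after that is bookkeeping read off from the diagram.
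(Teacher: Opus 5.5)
Your proof is correct, and it takes a different route from the paper's. Like the paper, you normalise to $\lambda=0$.

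The paper argues element by element. For a vector $\sum_{i=0}^k a_iv_i$ with $k>0$ and $a_k\neq 0$, it applies $\rho(x)^{k-1}$ and $\rho(x)^{k}$ and subtracts to isolate a multiple of $v_1$, which then generates $V^+$. It treats vectors supported on nonpositive indices the same way using $\mu(y)$. It then simply asserts (``similarly'') that vectors whose support straddles $0$ generate all of $V$. The step from ``what each vector generates'' to ``these are all the subrepresentations'' is left implicit, and so is the reason for indecomposability.

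You instead use the weight decomposition for $\rho(y)$: its eigenvalues $n\in\ZZ$ are distinct in characteristic $0$, so every subrepresentation is spanned by the basis vectors it contains. This turns the question into finding the subsets $S\subseteq\ZZ$ closed under $n\mapsto n\pm 1$ for $n\neq 0$.

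What your route buys:
\begin{itemize}
\item It produces the whole lattice of subrepresentations at once.
\item It handles the straddling case uniformly. There the paper's ``similarly'' hides a fiddlier combination of powers of $\rho(x)$ and $\mu(y)$, because $\rho(x)$ never annihilates negatively indexed vectors.
\item Indecomposability becomes immediate, since every nonzero subrepresentation contains $v_0$.
\end{itemize}

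What the paper's route buys: it uses only the lowering and raising maps $\rho(x)$ and $\mu(y)$, with no diagonalisation, in the same spirit as its arguments for the finite-dimensional line representations. The price is more case analysis and a less complete final step.
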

	

\end{example}

\subsubsection{Classification of irreducible finite-dimensional line representations}\label{irredCL2}

\begin{dfn}\label{irred_line_rep}
	We say a finite-dimensional representation of $CL_{2,4}$ is an \emph{irreducible finite-dimensional line representation} if, for some $n \geq 0$, it has a basis ${v_0,\ldots,v_n}$ such that
	\begin{align*}
		\rho(x)v_i = \alpha_i v_{i-1},\quad &\mu(x)v_i=\beta_i v_i, \\ \rho(y)v_i = \gamma_i v_i,\quad &\mu(y)v_i=\delta_i v_{i+1},
	\end{align*}
	for nonzero $\alpha_i$ and $\delta_i$, under the convention that $v_{-1} := 0 =: v_{n+1}$.
\end{dfn}

In other words, an irreducible line representation is one such that $\mu(x)$ and $\rho(y)$ act diagonally and $\rho(x)$ and $\mu(y)$ act nilpotently ``in opposite directions''. The diagram for these kinds of representations is as follows (hence the name ``line representation''):
\begin{center}
	\begin{tikzpicture}[node distance={25mm},thick,main/.style = {draw, circle,minimum size=1.1cm}] 
		\node[main] (0) {$v_0$}; 
		\node[main] (1) [right of=0] {$v_1$};
		\node[main] (2) [right of=1] {$v_{i-1}$};
		\node[main] (3) [right of=2] {$v_{i}$};
		\node[main] (4) [right of=3] {$v_{n-1}$};
		\node[main] (5) [right of=4] {$v_n$};

		\path (1) -- node{$\ldots$} (2);
		\path (3) -- node{$\ldots$} (4);
		
		\draw[->,color=blue] (1)  edge[bend right] node[midway, above]{$\alpha_1$}  (0) ;
		\draw[->,color=orange] (0)  edge[bend right] node[midway, below]{$\delta_0$}  (1) ;
		
		\draw[->,color=cyan] (0)  edge[loop above] node[midway, above]{$\gamma_0$}  (0) ;
		\draw[->,color=red] (0)  edge[loop below] node[midway, below]{$\beta_0$}  (0) ;
		\draw[->,color=cyan] (1)  edge[loop above] node[midway, above]{$\gamma_1$}  (1) ;
		\draw[->,color=red] (1)  edge[loop below] node[midway, below]{$\beta_1$}  (1) ;

		\draw[->,color=blue] (3)  edge[bend right] node[midway, above]{$\alpha_i$}  (2) ;
		\draw[->,color=orange] (2)  edge[bend right] node[midway, below]{$\delta_{i-1}$}  (3) ;
		
		\draw[->,color=cyan] (2)  edge[loop above] node[midway, above]{$\gamma_{i-1}$}  (2) ;
		\draw[->,color=red] (2)  edge[loop below] node[midway, below]{$\beta_{i-1}$}  (2) ;
		\draw[->,color=cyan] (3)  edge[loop above] node[midway, above]{$\gamma_i$}  (3) ;
		\draw[->,color=red] (3)  edge[loop below] node[midway, below]{$\beta_i$}  (3) ;

		\draw[->,color=blue] (5)  edge[bend right] node[midway, above]{$\alpha_n$}  (4) ;
		\draw[->,color=orange] (4)  edge[bend right] node[midway, below]{$\delta_{n-1}$}  (5) ;
		
		\draw[->,color=cyan] (4)  edge[loop above] node[midway, above]{$\gamma_{n-1}$}  (4) ;
		\draw[->,color=red] (4)  edge[loop below] node[midway, below]{$\beta_{n-1}$}  (4) ;
		
		\draw[->,color=cyan] (5)  edge[loop above] node[midway, above]{$\gamma_n$}  (5) ;
		\draw[->,color=red] (5)  edge[loop below] node[midway, below]{$\beta_n$}  (5) ;

	\end{tikzpicture}
\end{center}

\begin{rmk}\hfill
	\begin{itemize}
		\item 	We can see that these representations are irreducible, thus justifying their name: for any nonzero element $\sum_i a_i v_i$, we can apply $\rho(x)$ successively, so that we end up with a multiple of $v_0$ and from there apply $\mu(y)$ in order to obtain each of the other basis vectors. This works because all of the $\alpha_i$ and $\delta_i$ are nonzero.
		\item We have previously considered another family of line representations where $\rho(x)$ and $\mu(y)$ act ``in the same direction'', thus justifying the fact we do not just call the above representations ``finite-dimensional line representations''.
		\item Since $\rho(y)$ and $\mu(x)$ act diagonally, they commute. Therefore, combining that with the fact that $\sqb{x,y}=x$ and $\cb{x,y}=y$, in this case, \Cref{rep:id3} reduces to
		\begin{align}\label{rep:id4}
			\rho(y)+\mu(x)&= \rho(x)\mu(y)-\mu(y)\rho(x).
		\end{align}
		\item We do not claim that these are the only finite-dimensional irreducible representations of $CL_{2,4}$; that remains an open question.
	\end{itemize}
	
\end{rmk}

We now proceed by proving some lemmas that will aid us in the final classification result.

\begin{lemma}\label{betagamma}
	In an irreducible finite-dimensional line representation of dimension $n+1$, the coefficients $\beta_i$ and $\gamma_i$ satisfy $\beta_{i+1}= \beta_{i}+1$ and $\gamma_{i+1} = \gamma_{i}+1$, for $0 \leq i \leq n-1$.
\end{lemma}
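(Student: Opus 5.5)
We only need the two identities of Definition~\ref{representation} that involve a single product evaluated on the basis, namely \Cref{rep:id1} and \Cref{rep:id2} applied to the pair $(x,y)$. Since $\sqb{x,y}=x$ and $\cb{x,y}=y$ in $CL_{2,4}$, these read
\[
\rho(x)=\rho(x)\rho(y)-\rho(y)\rho(x), \qquad \mu(y)=\mu(x)\mu(y)-\mu(y)\mu(x).
\]
The plan is to evaluate both identities on a basis vector $v_i$ of the line representation and compare coefficients. The mixed identity \eqref{rep:id4} is not needed for this lemma; it will presumably be used later to pin down the actual values of $\beta_0$ and $\gamma_0$.

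First, apply the $\rho$-identity to $v_{i+1}$ for $0\le i\le n-1$. The left side is $\rho(x)v_{i+1}=\alpha_{i+1}v_i$. For the right side, note that $\rho(y)v_{i+1}=\gamma_{i+1}v_{i+1}$, so
\[
\rho(x)\rho(y)v_{i+1}=\gamma_{i+1}\alpha_{i+1}v_i \quad\text{and}\quad \rho(y)\rho(x)v_{i+1}=\alpha_{i+1}\gamma_i v_i.
\]
Hence $\alpha_{i+1}=\alpha_{i+1}(\gamma_{i+1}-\gamma_i)$. Since $\alpha_{i+1}\neq 0$ by the definition of an irreducible finite-dimensional line representation, this gives $\gamma_{i+1}=\gamma_i+1$.

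Second, apply the $\mu$-identity to $v_i$ for $0\le i\le n-1$. The left side is $\mu(y)v_i=\delta_i v_{i+1}$. For the right side,
\[
\mu(x)\mu(y)v_i=\delta_i\beta_{i+1}v_{i+1} \quad\text{and}\quad \mu(y)\mu(x)v_i=\beta_i\delta_i v_{i+1}.
\]
So $\delta_i=\delta_i(\beta_{i+1}-\beta_i)$, and since $\delta_i\neq0$ this gives $\beta_{i+1}=\beta_i+1$.

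There is no real obstacle here. The only points needing care are the index ranges and the boundary conventions $v_{-1}=0=v_{n+1}$. At $i=0$ the $\rho$-identity on $v_0$ is trivially $0=0$, and likewise the $\mu$-identity on $v_n$. This is why the relations are claimed only for $0\le i\le n-1$, which exactly matches the ranges where $\alpha_{i+1}$ and $\delta_i$ are genuinely required to be nonzero.
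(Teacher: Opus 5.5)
Your proof is correct and is the paper's argument: evaluate \eqref{rep:id1} on $v_{i+1}$ and \eqref{rep:id2} on $v_i$, then cancel the nonzero scalars $\alpha_{i+1}$ and $\delta_i$. The paper writes out only the $\rho$ case and leaves the $\mu$ case to the reader, whereas you spelled out both. One small correction to your aside, which does not affect this lemma: \eqref{rep:id4} later fixes only the sum $\beta_0+\gamma_0=-n$ (and the products $\alpha_{i+1}\delta_i$), not the individual values of $\beta_0$ and $\gamma_0$, so $\beta_0$ remains a free parameter.
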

\begin{proof}
	Using the identity \eqref{rep:id1} applied to the basis vector $v_{i+1}$ one obtains that
	\begin{align*}
		\alpha_{i+1}\gamma_{i+1} v_{i}-\gamma_{i}\alpha_{i+1}v_{i} = \alpha_{i+1} v_{i},
	\end{align*}
	and cancelling the $\alpha_{i+1}$, which is nonzero, we obtain the desired identity for the $\gamma_i$. Using \Cref{rep:id2} we deduce the same for the $\beta_i$.
\end{proof}

\begin{lemma}
	Let $(V,\rho,\mu)$ be an irreducible finite-dimensional line representation with 
	\begin{align*}
		\rho(x)v_i = \alpha_i v_{i-1},\quad &\mu(x)v_i=\beta_i v_i, \quad \rho(y)v_i = \gamma_i v_i, \quad \mu(y)v_i=\delta_i v_{i+1}.
	\end{align*}
	Then, for any $t \in \KK$, $(V,\rho',\mu')$ is also an irreducible finite-dimensional line representation, where
	\begin{align*}
		\rho'(x)=\rho(x),\quad &\mu'(x)v_i=(\beta_i+t) v_i, \quad \rho'(y)v_i = (\gamma_i-t) v_i,\quad \mu'(y)=\mu(y).
	\end{align*}
\end{lemma}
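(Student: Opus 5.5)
The plan is to write $\rho' = \rho + t\,\phi$ and $\mu' = \mu - t\,\phi$, where $\phi\colon CL_{2,4}\to \End_\KK(V)$ is the linear map with $\phi(x)=0$ and $\phi(y)=-\mathrm{id}_V$. Check: $\rho'(y)=\rho(y)-t\,\mathrm{id}$ and $\mu'(x)=\mu(x)+t\,\mathrm{id}$, matching the statement. The shape of \Cref{irred_line_rep} is clearly preserved. The $\alpha_i,\delta_i$ are unchanged and nonzero, while $\rho'(y)$ and $\mu'(x)$ are still diagonal. So $(V,\rho',\mu')$ is an irreducible finite-dimensional line representation as soon as it is a representation, and it only remains to verify \Crefrange{rep:id1}{rep:id3}. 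Because $\g$ is two-dimensional and all three identities are alternating bilinear in $(x,y)$, it suffices to check them on the single pair $(x,y)$.

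\Cref{rep:id1} reads $\rho'(x) = [\rho'(x),\rho'(y)]$. Since $\rho'(x)=\rho(x)$ and $\rho'(y)$ differs from $\rho(y)$ by a scalar, the commutator is unchanged, and the identity reduces to \Cref{rep:id1} for $\rho$. The same argument handles \Cref{rep:id2}, $\mu'(y)=[\mu'(x),\mu'(y)]$, since $\mu'(x)-\mu(x)$ is scalar. Equivalently, on basis vectors these are exactly the relations of \Cref{betagamma}, which are invariant under the uniform shifts $\beta_i\mapsto\beta_i+t$ and $\gamma_i\mapsto\gamma_i-t$.

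The only identity where the shift genuinely matters is the mixed one. Since $\rho'(y)$ and $\mu'(x)$ are still diagonal and hence commute, it takes the reduced form \eqref{rep:id4}:
\[
\rho'(y)+\mu'(x)=\rho'(x)\mu'(y)-\mu'(y)\rho'(x).
\]
The right-hand side equals $\rho(x)\mu(y)-\mu(y)\rho(x)$, which is unchanged. The left-hand side is $\rho(y)-t\,\mathrm{id}+\mu(x)+t\,\mathrm{id}=\rho(y)+\mu(x)$, so the $t$ terms cancel. This cancellation, driven by the opposite signs of the two shifts, is the key point of the proof. Without it a simultaneous shift would fail, and this is the step I would double-check against \eqref{rep:id4}. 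Since \eqref{rep:id4} holds for $(\rho,\mu)$, it holds for $(\rho',\mu')$, which completes the proof.
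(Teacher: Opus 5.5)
Your proof is correct and follows the paper's approach: verify \eqref{rep:id1}, \eqref{rep:id2} and \eqref{rep:id4} for the new maps and observe that $t$ cancels. The scalar shifts leave the commutators unchanged, and the opposite shifts cancel on the left-hand side of \eqref{rep:id4}.

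One slip needs fixing. Your opening encoding with a single $\phi$, where $\phi(x)=0$ and $\phi(y)=-\mathrm{id}_V$, does not match the statement. The formula $\mu'=\mu-t\,\phi$ gives $\mu'(x)=\mu(x)$ and $\mu'(y)=\mu(y)+t\,\mathrm{id}_V$, so your ``check'' line is false. Either drop $\phi$, or add a second map $\psi$ with $\psi(x)=\mathrm{id}_V$, $\psi(y)=0$ and set $\mu'=\mu+t\,\psi$. The rest of your verification already uses $\rho'$ and $\mu'$ exactly as defined in the statement, so nothing else changes.
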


\begin{proof}
	The proof consists of computing the identities \eqref{rep:id1}, \eqref{rep:id2} and \eqref{rep:id4} on the new representation, and observing that the parameter $t$ cancels in each of them.
\end{proof}

\begin{prop}\label{alphadelta}
	An irreducible finite-dimensional line representation satisfies
	\[\beta_i + \gamma_i = -n+2i; \quad \alpha_{i+1}\delta_i = (i+1)(i-n),\]
	for all $0 \leq i \leq n$, using the convention that $\alpha_{n+1} := 0$.
\end{prop}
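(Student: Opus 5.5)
The plan is to evaluate the reduced mixed identity \eqref{rep:id4} on each basis vector $v_i$ and combine it with \Cref{betagamma}. On $v_i$ the left-hand side of \eqref{rep:id4} gives $(\gamma_i+\beta_i)v_i$. For the right-hand side we have $\rho(x)\mu(y)v_i=\delta_i\alpha_{i+1}v_i$ and $\mu(y)\rho(x)v_i=\alpha_i\delta_{i-1}v_i$. These two formulas need boundary conventions: the first product vanishes for $i=n$ because $v_{n+1}=0$, and the second vanishes for $i=0$ because $v_{-1}=0$. We therefore set $c_i:=\alpha_{i+1}\delta_i$ for $0\le i\le n-1$ and $c_{-1}:=0=:c_n$, which matches the convention $\alpha_{n+1}:=0$ in the statement. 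We obtain the recursion
\[
\beta_i+\gamma_i=c_i-c_{i-1},\qquad 0\le i\le n .
\]

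Next, \Cref{betagamma} says that both $\beta_i$ and $\gamma_i$ increase by $1$ at each step, so $\beta_i+\gamma_i=s+2i$ with $s:=\beta_0+\gamma_0$. We then sum the recursion over $i=0,\dots,n$. The right-hand side telescopes to $c_n-c_{-1}=0$, while the left-hand side equals $(n+1)s+n(n+1)$. Hence $(n+1)(s+n)=0$, and so $s=-n$. This gives $\beta_i+\gamma_i=-n+2i$.

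Finally, we solve the recursion from $c_{-1}=0$. This gives $c_i=\sum_{j=0}^{i}(-n+2j)=-(i+1)n+i(i+1)=(i+1)(i-n)$, which is the claimed formula for $\alpha_{i+1}\delta_i$. It is consistent at $i=n$, where both sides vanish.

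The only delicate point is the cancellation of the factor $n+1$ when deducing $s=-n$. This step needs $n+1\neq 0$ in $\KK$. I would invoke the characteristic-$0$ standing assumption of this part of the section; alternatively, one could note explicitly that the argument requires $\operatorname{char}\KK\nmid n+1$. Everything else is a direct computation using the nonvanishing of the $\alpha_i$ and $\delta_i$ only through \Cref{betagamma}.
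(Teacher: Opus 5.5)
Your proof is correct and follows essentially the same route as the paper. The paper also evaluates \eqref{rep:id4} on each $v_i$ and uses \Cref{betagamma} to write $\beta_i+\gamma_i=\theta_0+2i$; it then solves the recursion upward from $i=0$ and compares with the equation at $i=n$, which is equivalent to your telescoping sum and gives $(n+1)(\theta_0+n)=0$. The paper cancels the factor $n+1$ without comment, so your explicit observation that this step needs $\operatorname{char}\KK\nmid n+1$ is a worthwhile refinement rather than a defect.
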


\begin{proof}
	Applying \Cref{rep:id4} to a basis vector $v_i$, we obtain on the left-hand side
	\begin{align*}
		\rho(y)v_i+\mu(x)v_i = \gamma_i v_i + \beta_i v_i,
	\end{align*}
	and on the right-hand side (with $\delta_{-1} := 0 =: \alpha_{n+1}$)
	\begin{align*}
		\rho(x)\mu(y)v_i-\mu(y)\rho(x)v_i = \rho(x)(\delta_i v_{i+1})-\mu(y)(\alpha_i v_{i-1})=\delta_i\alpha_{i+1}v_i-\alpha_i\delta_{i-1}v_i,
	\end{align*}
	and thus,
	\begin{align}\label{aux_abgd}
		\delta_i\alpha_{i+1}-\alpha_i\delta_{i-1} = \gamma_i + \beta_i.
	\end{align}
	Letting $\theta_i := \beta_i+\gamma_i$, we have that $\theta_i=\theta_0+2i$, from \Cref{betagamma}, and so \Cref{aux_abgd} becomes
	\begin{align}\label{aux2_abgd}
		\delta_i\alpha_{i+1}-\alpha_i\delta_{i-1} = \theta_0+2i.
	\end{align}
	Setting $i=0$ one obtains that $\delta_0\alpha_1=\theta_0$ and by induction it can be seen that
	\begin{align}\label{aux3_abdg}
		\delta_i\alpha_{i+1}=(i+1)\theta_0+i(i+1).
	\end{align}
	On the other hand, setting directly $i=n$ in \Cref{aux2_abgd}, we have
	\begin{align*}
		-\delta_{n-1}\alpha_n=\theta_0+2n.
	\end{align*}
	Comparing this to \Cref{aux3_abdg}, one readily obtains that $\theta_0=-n$ and therefore also that
$\alpha_{i+1}\delta_i = (i+1)(i-n)$, concluding the proof.
\end{proof}

In light of this result, since the $\delta_i$ are fully determined by the $\alpha_i$ and the $\gamma_i$, $\beta_i$ are fully determined by $\beta_0$, we may parametrise each irreducible finite-dimensional line representation as given in \Cref{irred_line_rep} by its dimension $n+1$, the scalar $\beta_0$ and the tuple $\alpha=(\alpha_1, \ldots, \alpha_n)\in~(\KK^\times)^n$. 

Conversely, given such a triple $(n, \beta_0, \alpha)$ and setting $\beta_i = \beta_0 + i$ ($1 \leq i \leq n$), $\gamma_i = i - n - \beta_0$ ($0\leq i \leq n$), and $\delta_i = \alpha_{i+1}^{-1}(i+1)(i-n)$  ($0 \leq i \leq n-1$), the next result shows that we obtain an irreducible finite-dimensional line representation, which we denote by $V(n,\beta_0,\alpha)$.

\begin{theorem}\label{classification_line}
	Using the notation above, we have the following.
	\begin{enumerate}[(a)]
		\item The irreducible finite-dimensional line representations are the $V(n,\beta_0,\alpha)$, in bijection with the triples $(n, \beta_0, \alpha)$ with $n \in \ZZ_{\geq 0}$, $\beta_0 \in \KK$ and $\alpha \in (\KK^\times)^n$.
		\item $V(n,\beta_0,\alpha) \cong V(n',\beta_0',\alpha')$ if and only if $n = n'$ and $\beta_0 = \beta_0'$.
	\end{enumerate}
\end{theorem}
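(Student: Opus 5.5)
Part (a) has two halves. One direction is already done: by \Cref{betagamma} and \Cref{alphadelta}, every irreducible finite-dimensional line representation with chosen basis $v_0,\ldots,v_n$ is determined by $n$, $\beta_0$ and $\alpha=(\alpha_1,\ldots,\alpha_n)$. Indeed, $\beta_i=\beta_0+i$, and $\gamma_i=\theta_i-\beta_i=(-n+2i)-(\beta_0+i)=i-n-\beta_0$, and $\delta_i=\alpha_{i+1}^{-1}(i+1)(i-n)$. The data $(n,\beta_0,\alpha)$ can be read back off from the representation with its basis, so the assignment is injective.

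For the converse I will check that the operators defined from an arbitrary triple satisfy \eqref{rep:id1}, \eqref{rep:id2} and \eqref{rep:id4}; since $\rho(y)$ and $\mu(x)$ are diagonal, \eqref{rep:id4} is equivalent to \eqref{rep:id3}. Because $\sqb{x,x}=\sqb{y,y}=0$, it suffices to test the pair $(x,y)$ on each $v_i$.
\begin{itemize}
\item \eqref{rep:id1} reads $\alpha_i(\gamma_i-\gamma_{i-1})=\alpha_i$, which holds since $\gamma_i-\gamma_{i-1}=1$.
\item \eqref{rep:id2} reads $\mu(y)=\mu(x)\mu(y)-\mu(y)\mu(x)$. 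On $v_i$ it becomes $\delta_i(\beta_{i+1}-\beta_i)=\delta_i$, which holds.
\item \eqref{rep:id4} reads $\delta_i\alpha_{i+1}-\alpha_i\delta_{i-1}=\beta_i+\gamma_i$. The left side is $(i+1)(i-n)-i(i-1-n)=2i-n$, which equals $\beta_i+\gamma_i$.
\end{itemize}
The boundary cases $i=0$ and $i=n$ are covered by the conventions $\alpha_{n+1}=\delta_{-1}=0$. All $\alpha_i\neq 0$ by assumption, and all $\delta_i\neq 0$ because $(i+1)(i-n)\neq 0$ for $0\le i\le n-1$; this needs characteristic $0$, or at least characteristic larger than $n$, which I will flag as the standing assumption. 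This proves (a).

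For (b), necessity is invariant-based. Isomorphic representations have the same dimension, so $n=n'$. Also $\mu(x)$ is diagonalisable with eigenvalues $\beta_0,\beta_0+1,\ldots,\beta_0+n$, so its spectrum determines $\beta_0$ as its minimal eigenvalue (in characteristic $0$). For sufficiency, I will take the diagonal change of basis $w_i=c_iv_i$ with $c_0=1$ and $c_i\neq 0$ to be chosen. This leaves the diagonal actions unchanged. It turns $\rho(x)w_i=\alpha_ic_i c_{i-1}^{-1}w_{i-1}$, so taking $c_i=c_{i-1}\alpha_i'/\alpha_i$ makes the new coefficient $\alpha_i'$.

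The new $\delta$'s are then forced to be $\alpha'^{-1}_{i+1}(i+1)(i-n)$ by \Cref{alphadelta}, or directly because $\delta_i c_i/c_{i+1}$ equals exactly that expression. Hence $V(n,\beta_0,\alpha)\cong V(n,\beta_0,\alpha')$ via $v_i\mapsto w_i$.

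The main obstacle is the eigenvalue step in (b) and the nonvanishing of the $\delta_i$, which are the two places where the characteristic matters. Everything else is routine verification on basis vectors.
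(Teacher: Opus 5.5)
Your proposal is correct and follows essentially the same route as the paper: a basis-vector check of \eqref{rep:id1}, \eqref{rep:id2} and \eqref{rep:id4} for (a), a diagonal rescaling of the basis for sufficiency in (b), and the spectrum of $\mu(x)$ for necessity. The paper rescales directly to $\alpha=(1,\ldots,1)$ via $b_i=\alpha_n\alpha_{n-1}\cdots\alpha_{i+1}v_i$. Your explicit characteristic hypothesis, needed for $\delta_i\neq 0$ and for recovering $\beta_0$, is something the paper leaves implicit. Since $\KK$ need not be ordered, phrase the latter as: $\beta_0$ is the unique eigenvalue $s$ of $\mu(x)$ such that $s-1$ is not an eigenvalue.
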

\begin{proof}
	The first statement follows from a direct verification that \Cref{rep:id1,rep:id2,rep:id3} are satisfied for all sets of parameters $(n,\beta_0,\alpha)$. Applying each equation to a basis vector $v_i$ yields a valid equality.
	
	To show that, for fixed $n$ and $\beta_0$, all tuples $\alpha=(\alpha_1, \ldots, \alpha_n)$ yield the same isomorphism class of representations, we note that we may define a new basis $B'=\cb{b_0, \ldots, b_n}$ of $V$ by letting $b_n=v_n$ and $b_i=\alpha_n\alpha_{n-1}\cdots\alpha_{i+1}v_i$ for $i < n$. The parameters relative to this basis are $(n,\beta_0,(1,\ldots,1))$.
	
	It is clear that representations with different dimensions cannot be isomorphic. Finally, different $\beta_0$ yield non-isomorphic representations because the set of eigenvalues for the action of $\mu(x)$ is $\cb{\beta_0,\ldots,\beta_n}$, and is determined by $\beta_0$.
\end{proof}

With this in mind, we may omit the $\alpha$ from the notation for an irreducible finite-dimensional line representation and write $V(n,\beta_0)$ when we do not need to distinguish between isomorphic representations.

\begin{rmk}
	We note that the finite-dimensional line representations we have just classified bear a strong resemblance to the finite-dimensional irreducible representations of the Lie algebra $\mathfrak{sl}_2$. 
	Recall the standard presentation of $\mathfrak{sl}_2$, with basis ${e, f, h}$ and relations
	\[
	\sqb{e,f} = h, \quad \sqb{h,e} = 2e,\quad \sqb{h,f} = 2f.
	\]
	It is well known that for an irreducible $(n+1)$-dimensional representation $\rho'$ of $\mathfrak{sl}_2$, there is a basis ${v_0, v_1, \ldots, v_n}$ where
	\[
	\rho'(e)v_i = (n-i+1) v_{i-1}, \quad \rho'(f)v_i = (i+1)v_{i+1}, \quad \rho'(h)v_i = (n-2i)v_i.
	\]
	Now, letting $\alpha = \nb{n, n-1, \ldots, 2, 1}$, we see that (using the relations in \Cref{alphadelta} in $V(n,\beta_0,\alpha)$)
	\[
		\rho(x)v_i = (n-i+1) v_{i-1}, \quad \mu(y)v_i = - (i+1) v_{i+1}, \quad (\rho(y) + \mu(x)) v_i = -(n-2i)v_i.
	\]
	Thus, on the common underlying vector space $\KK v_0 \oplus \cdots \oplus \KK v_n$, $\rho(x)$ acts as $e \in \mathfrak{sl}_2$, $\mu(y)$ acts as $-f \in \mathfrak{sl}_2$ and $\rho(y)+\mu(x)$ acts as $-h \in \mathfrak{sl}_2$.
\end{rmk}

\section{Tensor products of representations}\label{tensor_sect}

In this section, we explore tensor products of representations of $CL_{2,4}$ of the type considered in the previous section, obtaining a Clebsch--Gordan type formula. 

\begin{dfn}\label{tensorrepdef}
	The tensor product of two representations $(V,\rho,\mu)$ and $(V',\rho',\mu')$ of a compatible Lie algebra $\g$ is the representation defined as $(V\otimes V',\rho\otimes\rho', \mu\otimes\mu')$, where
	\[
	(\rho\otimes\rho')(x)(v\otimes v')=\rho(x)v\otimes v' + v \otimes \rho'(x)v', \quad (\mu\otimes\mu')(x)(v\otimes v')=\mu(x)v\otimes v' + v \otimes \mu'(x)v',
	\]
	for $x \in \g, v \in V, v' \in V'$
\end{dfn}

%
%
%

We start by giving explicit expressions for the tensor product of finite-dimensional line representations. Taking \Cref{classification_line} into account, we may assume that $\alpha=(1,\ldots,1)$.

\begin{dfn}\label{canonical_Vn}
	The line representation of dimension $n+1$ and parameter $\beta$, which we will denote by $V(n,\beta)$, is the $(n+1)$-dimensional vector space with basis ${v_0, \ldots, v_n}$ and actions given by
	\begin{align*}
		\rho(x)v_i = v_{i-1},\quad &\mu(x)v_i=(\beta+i) v_i, \\ \rho(y)v_i = (i-n-\beta) v_i,\quad &\mu(y)v_i=(i+1)(i-n) v_{i+1}.
	\end{align*}
\end{dfn}

The diagram for this representation is as follows.

\begin{center}
	\begin{tikzpicture}[node distance={25mm},thick,main/.style = {draw, circle,minimum size=1.1cm}] 
		\node[main] (0) {$v_0$}; 
		\node[main] (1) [right of=0] {$v_1$};
		\node[main] (2) [right of=1] {$v_{i-1}$};
		\node[main] (3) [right of=2] {$v_{i}$};
		\node[main] (4) [right of=3] {$v_{n-1}$};
		\node[main] (5) [right of=4] {$v_n$};

		\path (1) -- node{$\ldots$} (2);
		\path (3) -- node{$\ldots$} (4);
		
		\draw[->,color=blue] (1)  edge[bend right] node[midway, above]{$1$}  (0) ;
		\draw[->,color=orange] (0)  edge[bend right] node[midway, below]{$-n$}  (1) ;
		
		\draw[->,color=cyan] (0)  edge[loop above] node[midway, above]{$-n-\beta$}  (0) ;
		\draw[->,color=red] (0)  edge[loop below] node[midway, below]{$\beta$}  (0) ;
		\draw[->,color=cyan] (1)  edge[loop above] node[midway, above]{$-n-\beta+1$}  (1) ;
		\draw[->,color=red] (1)  edge[loop below] node[midway, below]{$\beta + 1 $}  (1) ;

		\draw[->,color=blue] (3)  edge[bend right] node[midway, above]{$1$}  (2) ;
		\draw[->,color=orange] (2)  edge[bend right] node[midway, below]{$i(i-n-1)$}  (3) ;
		
		\draw[->,color=cyan] (2)  edge[loop above] node[midway, above]{$-n-\beta+i-1$}  (2) ;
		\draw[->,color=red] (2)  edge[loop below] node[midway, below]{$\beta  + i -1$}  (2) ;
		\draw[->,color=cyan] (3)  edge[loop above] node[midway, above]{$-n-\beta + i$}  (3) ;
		\draw[->,color=red] (3)  edge[loop below] node[midway, below]{$\beta + i$}  (3) ;

		\draw[->,color=blue] (5)  edge[bend right] node[midway, above]{$1$}  (4) ;
		\draw[->,color=orange] (4)  edge[bend right] node[midway, below]{$-n$}  (5) ;
		
		\draw[->,color=cyan] (4)  edge[loop above] node[midway, above]{$-\beta-1$}  (4) ;
		\draw[->,color=red] (4)  edge[loop below] node[midway, below]{$\beta + n - 1$}  (4) ;
		
		\draw[->,color=cyan] (5)  edge[loop above] node[midway, above]{$-\beta$}  (5) ;
		\draw[->,color=red] (5)  edge[loop below] node[midway, below]{$\beta + n$}  (5) ;

	\end{tikzpicture}
\end{center}

\begin{nota}
	For simplicity, we will denote $v_i\otimes v_j$ by $w_{ij}$ and $(\rho\otimes\rho)(w_{ij})$, $(\mu\otimes\mu)(w_{ij})$  by $\rho(w_{ij})$, $\mu(w_{ij})$, respectively.
\end{nota}

\begin{prop}[Explicit expressions for the actions on $V(m,\beta)\otimes V(n,\beta')$]
	We have that
	\begin{equation}\label{tensor_id:1}
		\begin{alignedat}{2}
			&	\rho(x)w_{ij} = w_{i-1,j} + w_{i,j-1}, \quad && \rho(y)w_{ij}=(i+j-m-n-\beta-\beta')w_{ij}, \\
			&	\mu(x)w_{ij} = (\beta+\beta'+i+j)w_{ij}, \quad && \mu(y)w_{ij}=(i-m)(i+1)w_{i+1,j} + (j-n)(j+1)w_{i,j+1}.
		\end{alignedat}
	\end{equation}
\end{prop}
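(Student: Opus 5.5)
The plan is a direct computation from \Cref{tensorrepdef} applied to the canonical bases of \Cref{canonical_Vn}. Write $V(m,\beta)$ with basis $v_0,\ldots,v_m$ and $V(n,\beta')$ with basis $v_0',\ldots,v_n'$, so that $w_{ij}=v_i\otimes v_j'$ for $0\le i\le m$ and $0\le j\le n$. Adopt the conventions $v_{-1}=0=v_{m+1}$ and $v'_{-1}=0=v'_{n+1}$, which gives $w_{-1,j}=w_{i,-1}=0$, $w_{m+1,j}=0$ and $w_{i,n+1}=0$. By \Cref{tensorrepdef}, each of $\rho(z)$ and $\mu(z)$, for $z\in\{x,y\}$, acts on $w_{ij}$ as the sum of the action on the first factor and the action on the second factor. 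We therefore treat the four operators one at a time.

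For $\rho(x)$, each factor is shifted down by one, giving $v_{i-1}\otimes v_j'+v_i\otimes v_{j-1}'=w_{i-1,j}+w_{i,j-1}$.

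For $\mu(x)$, both factors act diagonally with eigenvalues $\beta+i$ and $\beta'+j$. These add to give $(\beta+\beta'+i+j)w_{ij}$.

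For $\rho(y)$, both factors again act diagonally, with eigenvalues $i-m-\beta$ and $j-n-\beta'$. These add to give $(i+j-m-n-\beta-\beta')w_{ij}$.

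For $\mu(y)$, the first factor contributes $(i+1)(i-m)w_{i+1,j}$ and the second contributes $(j+1)(j-n)w_{i,j+1}$. Reordering the scalar factors gives the stated formula $(i-m)(i+1)w_{i+1,j}+(j-n)(j+1)w_{i,j+1}$.

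The only point needing care is the boundary indices. At $i=m$ or $j=n$, the coefficient $(i-m)$ or $(j-n)$ vanishes, so the out-of-range terms $w_{m+1,j}$ and $w_{i,n+1}$ drop out. This is consistent with $\mu(y)v_m=0$ in \Cref{canonical_Vn}. At $i=0$ or $j=0$, the convention $v_{-1}=0$ handles the $\rho(x)$ terms. There is no real obstacle here; the main task is keeping the conventions straight.
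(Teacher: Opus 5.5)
Your proposal is correct and is the same direct computation from the definition of the tensor product applied to the bases of $V(m,\beta)$ and $V(n,\beta')$ that the paper invokes and omits. Your handling of the boundary terms is sound: the factors $(i-m)$ and $(j-n)$ kill $w_{m+1,j}$ and $w_{i,n+1}$, and the convention $v_{-1}=0$ handles $\rho(x)$ at $i=0$ or $j=0$.
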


\begin{proof}
	The proof is a direct computation that follows from \Cref{tensorrepdef} and is omitted.
\end{proof}

The central result of this section is the following.

\begin{theorem}[Clebsch--Gordan formula for finite-dimensional line representations]
	Assume that the base field $\KK$ has characteristic $0$. Representations of the form $V(m,\beta)\otimes V(n,\beta')$ can be decomposed recursively as follows:
	\[
	V(m,\beta)\otimes V(n,\beta') \simeq V(m+n,\beta+\beta')\oplus \big( V(m-1,\beta+1)\otimes V(n-1,\beta') \big),
	\]
	or explicitly as follows:
	\[
	V(m,\beta)\otimes V(n,\beta') \simeq \bigoplus_{i=0}^m V(m+n-2i, \beta+\beta'+i).
	\]
\end{theorem}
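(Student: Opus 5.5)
The idea is to reduce to the classical Clebsch--Gordan theorem for $\mathfrak{sl}_2$, using the dictionary in the remark after \Cref{classification_line}; without loss of generality $m\le n$, since the tensor product is symmetric. On $V(m,\beta)\otimes V(n,\beta')$, with the actions \eqref{tensor_id:1}, I set
\[
e:=\rho(x),\qquad f:=-\mu(y),\qquad h:=-(\rho(y)+\mu(x)).
\]
I also introduce the grading $\deg w_{ij}=i+j$. By \eqref{tensor_id:1}, $\mu(x)$ and $\rho(y)$ act on the degree-$d$ part $G_d$ by the scalars $\beta+\beta'+d$ and $d-m-n-\beta-\beta'$. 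Hence $h$ acts on $G_d$ by $m+n-2d$, while $e$ lowers and $f$ raises the degree by one.

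\emph{Step 1: the $\mathfrak{sl}_2$-structure.} The relation $[e,f]=h$ is exactly \Cref{rep:id4} for the tensor product. I would check directly from \eqref{tensor_id:1} that the tensor product is again of the "diagonal $\rho(y),\mu(x)$" type, so \Cref{rep:id3} reduces to \Cref{rep:id4}; alternatively, one verifies $[e,f]=h$ on $w_{ij}$ by a one-line computation. The relations $[h,e]=2e$ and $[h,f]=-2f$ follow from the grading. Thus $V(m,\beta)\otimes V(n,\beta')$ is an $\mathfrak{sl}_2$-module. It is isomorphic to $L(m)\otimes L(n)$, where $L(k)$ is the irreducible $(k+1)$-dimensional module; indeed each factor $V(k,\cdot)$ is $L(k)$ by the remark, and the coproducts agree. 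In characteristic $0$, classical Clebsch--Gordan gives $L(m)\otimes L(n)\cong\bigoplus_{i=0}^m L(m+n-2i)$. Concretely, for each $0\le i\le m$ the space $\ker e\cap G_i$ is one-dimensional; this is the kernel of the surjection $G_i\to G_{i-1}$, with dimensions $i+1$ and $i$. Choosing $z_i$ spanning it, $W_i:=\operatorname{span}\{f^k z_i\}$ is a simple $\mathfrak{sl}_2$-submodule of dimension $m+n-2i+1$, and $\bigoplus_i W_i$ is the whole space.

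\emph{Step 2: compatibility of the decomposition.} Each $W_i$ is spanned by homogeneous vectors $u_k:=\mu(y)^k z_i\in G_{i+k}$. Since $\mu(x)$ and $\rho(y)$ are scalar on each $G_d$, every $W_i$ is stable under all four operators and is therefore a subrepresentation of the compatible representation. So the $\mathfrak{sl}_2$-direct sum is a direct sum of compatible representations.

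\emph{Step 3: identification of the summands.} In the basis $u_0,\dots,u_N$ with $N=m+n-2i$, the operator $\mu(y)$ shifts $u_k\mapsto u_{k+1}$, with coefficient $\delta_k=1$ and $u_{N+1}=0$. The operator $\rho(x)$ sends $u_k$ to a scalar multiple of $u_{k-1}$, nonzero for $1\le k\le N$ by standard $\mathfrak{sl}_2$ theory: inductively via \Cref{rep:id4}, the scalar is a quadratic in $k$ whose only zeros are $k=0$ and $k=N+1$. The operators $\mu(x)$ and $\rho(y)$ are diagonal, with $\mu(x)u_0=(\beta+\beta'+i)u_0$. Therefore $W_i$ is an irreducible finite-dimensional line representation with parameters $n=N$ and $\beta_0=\beta+\beta'+i$. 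By \Cref{classification_line}(b) it is isomorphic to $V(m+n-2i,\beta+\beta'+i)$, which gives the explicit formula. The recursive formula then follows by comparing the explicit formula for $(m,n,\beta,\beta')$ with that for $(m-1,n-1,\beta+1,\beta')$: the $i=0$ summand is $V(m+n,\beta+\beta')$, and the remaining summands, reindexed by $i\mapsto i-1$, are exactly $V(m-1+n-1-2i',\beta+1+\beta'+i')$. When $m=0$, the recursive term is read as zero.

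The main obstacle I expect is Step 1. One must confirm that the tensor product respects the $\mathfrak{sl}_2$ dictionary, in particular that $[e,f]=h$ holds on $V\otimes V'$. Also, the remark's identification with the standard basis uses the choice $\alpha=(n,\dots,1)$, so one must pass through \Cref{classification_line} to know that the isomorphism type is independent of $\alpha$. If a direct appeal to classical Clebsch--Gordan feels too quick, the fallback is to prove directly the existence and uniqueness of $z_i$, the nonvanishing of the $\rho(x)$-coefficients, and the directness of $\sum_i W_i$. The last point follows from the distinct $h$-eigenvalues on kernels of $e$ together with the dimension count $\sum_{i=0}^m(m+n-2i+1)=(m+1)(n+1)$.
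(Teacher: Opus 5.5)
Your proof is correct, but it takes a different route from the paper's. The paper never uses $\mathfrak{sl}_2$. It proves the recursive formula first, by exhibiting two subrepresentations of $V(m,\beta)\otimes V(n,\beta')$. The first is the span $D$ of the vectors $\rho(x)^{m+n-\ell}w_{mn}$, shown to be a copy of $V(m+n,\beta+\beta')$ by a downward induction using $\mu(y)\rho(x)=\rho(x)\mu(y)-\rho(y)-\mu(x)$. The second is the span $C$ of the vectors $c_{ij}=(j+1)w_{i,j+1}-(i+1)w_{i+1,j}$, on which the four operators act by exactly the formulas for $V(m-1,\beta+1)\otimes V(n-1,\beta')$. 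The sum $D+C$ is direct because $w_{mn}\in D\setminus C$ and $D$ is irreducible, a dimension count gives equality with the whole space, and iterating gives the explicit formula. You instead prove the explicit formula first, by transport from classical Clebsch--Gordan, and read off the recursive one. What makes the transport legitimate is your Step 2: on the tensor product $\mu(x)=\beta+\beta'+(m+n-h)/2$ and $\rho(y)=-h-\mu(x)$, so the compatible subrepresentations are exactly the $\mathfrak{sl}_2$-submodules. This explains conceptually why complete reducibility holds for these tensor products even though Weyl's theorem fails for $CL_{2,4}$ in general. The cost is dependence on classical $\mathfrak{sl}_2$ theory, which is harmless in characteristic $0$. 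The surjectivity of $e\colon G_i\to G_{i-1}$ that you assert is a one-line check, with the kernel spanned by $\sum_a(-1)^a w_{a,i-a}$. Also, your recursive formula is only an identity of isomorphism classes, whereas the paper realises $V(m-1,\beta+1)\otimes V(n-1,\beta')$ as an explicit subrepresentation with basis $c_{ij}$. Finally, your reduction to $m\le n$ is not cosmetic: the displayed explicit formula is only correct with upper limit $\min(m,n)$. The flip isomorphism, together with the symmetry of $\beta+\beta'+i$ in $\beta,\beta'$, is exactly what justifies that reduction.
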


We prove this theorem via three lemmas.

\begin{lemma}\label{subrepD}
	The representation $V(m,\beta)\otimes V(n,\beta')$ has a subrepresentation $D$ isomorphic to $V(m+n,\beta+\beta')$.
\end{lemma}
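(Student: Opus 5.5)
The plan is to build $D$ as a ``highest-weight'' line inside $V(m,\beta)\otimes V(n,\beta')$. I will start from the vector $u_0:=w_{00}$ and generate the rest by applying $\mu(y)$ repeatedly. The point is that the actions in \eqref{tensor_id:1} respect the grading by total degree $i+j$. Write $W_k$ for the span of the $w_{ij}$ with $i+j=k$. Then
\begin{itemize}
\item $\rho(x)$ maps $W_k$ to $W_{k-1}$;
\item $\mu(y)$ maps $W_k$ to $W_{k+1}$;
\item $\mu(x)$ acts on $W_k$ by the scalar $\beta+\beta'+k$;
\item $\rho(y)$ acts on $W_k$ by the scalar $k-(m+n)-(\beta+\beta')$.
\end{itemize}
These scalars are exactly the diagonal data of $V(m+n,\beta+\beta')$ in \Cref{canonical_Vn} at index $k$.

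Concretely, set $u_k:=\mu(y)^k w_{00}\in W_k$ for $0\le k\le m+n$, and let $D=\ab{u_0,\ldots,u_{m+n}}$. I will check the following.
\begin{enumerate}[(i)]
\item $\mu(x)$ and $\rho(y)$ act on each $u_k$ by the correct scalars. This is automatic from the grading.
\item $\mu(y)u_k=u_{k+1}$ by construction.
\item $u_{m+n+1}=0$. This is automatic, since $W_{m+n+1}=0$.
\item $\rho(x)u_k=c_k u_{k-1}$ for explicit scalars $c_k$.
\item Each $u_k\neq 0$ and $c_k\neq0$.
\end{enumerate}
Once these hold, $D$ is closed under all four operators and is a line representation in the sense of \Cref{irred_line_rep}. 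It has dimension $m+n+1$ and $\mu(x)$-eigenvalue $\beta+\beta'$ on the bottom vector $u_0$. By \Cref{classification_line}(b), this forces $D\cong V(m+n,\beta+\beta')$.

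The key step is (iv). I will prove it by induction on $k$, using the commutation relation \Cref{rep:id4}, $\rho(x)\mu(y)-\mu(y)\rho(x)=\rho(y)+\mu(x)$, which holds on the tensor product because it is a representation. The base case is $\rho(x)u_0=\rho(x)w_{00}=0$. For the inductive step,
\[
\rho(x)u_{k+1}=\mu(y)\rho(x)u_k+(\rho(y)+\mu(x))u_k .
\]
Since $\rho(y)+\mu(x)$ acts on $W_k$ by $2k-(m+n)$, this gives $\rho(x)u_{k+1}=\bigl(c_k+2k-(m+n)\bigr)u_k$. Hence
\[
c_k=\sum_{j<k}\bigl(2j-(m+n)\bigr)=k(k-1-m-n),
\]
matching \Cref{alphadelta} with $N=m+n$. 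For $1\le k\le m+n$, $c_k$ is nonzero in characteristic $0$.

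It remains to show $u_k\ne0$ for $k\le m+n$, which I expect to be the main obstacle, though a mild one. The argument runs through $u_{m+n}$. Since $W_{m+n}=\KK w_{mn}$, we can write $u_{m+n}=\kappa w_{mn}$. Applying $\rho(x)^{m+n}$ to $u_{m+n}$ gives $\prod_k c_k\,u_0$, which is a nonzero multiple of $w_{00}$ because every $c_k\neq 0$. So $u_{m+n}\ne0$, and therefore every $u_k\neq 0$, since $\mu(y)$ sends $0$ to $0$.

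Finally, the $u_k$ lie in distinct graded pieces $W_k$, so they are linearly independent and $\dim D=m+n+1$. I could rescale the basis as in the proof of \Cref{classification_line} to match \Cref{canonical_Vn} exactly, but this is not needed, since part (b) of that theorem already identifies the isomorphism class from the dimension and the $\mu(x)$-eigenvalue $\beta+\beta'$.
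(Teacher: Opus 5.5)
Your proof is correct and is essentially the paper's argument run in the opposite direction: the paper starts from the top vector $w_{mn}$, sets $d_\ell=\rho(x)^{m+n-\ell}w_{mn}$, and uses \eqref{rep:id4} in a downward induction to get $\mu(y)d_\ell=\lambda_\ell d_{\ell+1}$, whereas you start from $w_{00}$, raise with $\mu(y)$, and run the same commutator induction upward to get $\rho(x)u_k=c_k u_{k-1}$. One small advantage of your write-up is that you explicitly check that the $u_k$ and the constants $c_k=k(k-1-m-n)$ are nonzero; the paper leaves this implicit, and its $\lambda_\ell=(\ell+1)(\ell-m-n)$ only appears in a later remark.
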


\begin{lemma}\label{subrepC}
	The representation $V(m,\beta)\otimes V(n,\beta')$ has a subrepresentation $C$ isomorphic to $ V(m-1,\beta+1)\otimes V(n-1,\beta')$.
\end{lemma}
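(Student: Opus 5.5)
We prove the lemma by writing down an explicit injective homomorphism of representations
\[
\varphi\colon V(m-1,\beta+1)\otimes V(n-1,\beta')\longrightarrow V(m,\beta)\otimes V(n,\beta')
\]
and taking $C=\operatorname{im}\varphi$. We assume $m,n\geq 1$, since otherwise the statement is vacuous. Denote the standard bases of $V(m-1,\beta+1)$ and $V(n-1,\beta')$ by $u_0,\ldots,u_{m-1}$ and $u'_0,\ldots,u'_{n-1}$, respectively. The guiding heuristic is the $\mathfrak{sl}_2$ picture from the preceding remark. There, the lower components of a tensor product are generated by the ``antisymmetric'' vector $v_1\otimes v_0-v_0\otimes v_1$. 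Here $\rho(x)(w_{10}-w_{01})=0$ by \eqref{tensor_id:1}, which suggests that $\varphi(u_a\otimes u'_b)$ should be a combination of $w_{a+1,b}$ and $w_{a,b+1}$.

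The first step is to check that this is compatible with the diagonal operators. By \eqref{tensor_id:1} applied to both tensor products, $u_a\otimes u'_b$ has $\mu(x)$-eigenvalue $\beta+\beta'+a+b+1$ and $\rho(y)$-eigenvalue $a+b+1-m-n-\beta-\beta'$. These agree with the eigenvalues of every $w_{ij}$ with $i+j=a+b+1$. Hence any $\varphi$ of the form
\[
\varphi(u_a\otimes u'_b)=p_a\,w_{a+1,b}-q_b\,w_{a,b+1}
\]
automatically intertwines $\mu(x)$ and $\rho(y)$.

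The second step is to determine $p$ and $q$ from the off-diagonal operators.
\begin{itemize}
\item Imposing $\rho(x)\varphi=\varphi\rho(x)$ and comparing the coefficients of $w_{a,b}$, $w_{a+1,b-1}$ and $w_{a-1,b+1}$ shows that the coefficients may be taken to depend only on $a$ (for $p$) and only on $b$ (for $q$), with $p_a-p_{a-1}=q_b-q_{b-1}$ equal to a common constant.
\item Imposing $\mu(y)\varphi=\varphi\mu(y)$, the coefficient of $w_{a+2,b}$ gives $(a+2)p_a=(a+1)p_{a+1}$. Symmetrically, the coefficient of $w_{a,b+2}$ gives $(b+2)q_b=(b+1)q_{b+1}$.
\end{itemize}
Together these force, up to a global scalar,
\[
\varphi(u_a\otimes u'_b)=(a+1)\,w_{a+1,b}-(b+1)\,w_{a,b+1}.
\]
We then verify the remaining coefficient of $w_{a+1,b+1}$ in the $\mu(y)$-condition. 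Both sides equal $(a+1)(b+1)\bigl[(b-n)-(a-m)\bigr]$, which is a short direct check. The boundary cases $a=m-1$ and $b=n-1$ should be checked separately; there the vanishing factors $(a+1-m)$ and $(b+1-n)$ ensure that no terms leave the allowed index ranges.

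The final step is injectivity, which is where characteristic $0$ is used. Order the basis $\{w_{ij}\}$ lexicographically. The image $\varphi(u_a\otimes u'_b)$ has leading term $(a+1)w_{a+1,b}$, and $a+1\neq 0$ in $\KK$. Distinct pairs $(a,b)$ give distinct leading indices $(a+1,b)$, so the images are linearly independent. Thus $C=\operatorname{im}\varphi$ is a subrepresentation isomorphic to $V(m-1,\beta+1)\otimes V(n-1,\beta')$.

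We expect the main obstacle to be the bookkeeping in the $\mu(y)$-verification. This is the only place where the specific coefficients $(i+1)(i-m)$ of \Cref{canonical_Vn} and the shift $\beta\mapsto\beta+1$ interact nontrivially, and the edge indices must be handled carefully there.
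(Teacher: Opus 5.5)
Your proposal is correct and is essentially the paper's proof. Your $\varphi(u_a\otimes u'_b)=(a+1)w_{a+1,b}-(b+1)w_{a,b+1}$ is exactly $-c_{ab}$ for the paper's vectors $c_{ij}=(j+1)w_{i,j+1}-(i+1)w_{i+1,j}$, and the paper likewise checks the four actions directly on these vectors and proves their linear independence. The differences are only in presentation: you derive the coefficients by solving the intertwining equations, and you prove independence with a lexicographic leading-term argument rather than the paper's induction within each graded piece $i+j=\ell$.
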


\begin{lemma}\label{DCdirectsum}
	The subrepresentations $D$ and $C$  of $V(m,\beta)\otimes V(n,\beta')$ from the preceding two lemmas are such that
	\[
	V(m,\beta)\otimes V(n,\beta') = D \oplus C.
	\]
\end{lemma}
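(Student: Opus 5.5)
The plan is to combine a dimension count with the irreducibility of $D$. By \Cref{subrepD}, $D\cong V(m+n,\beta+\beta')$, so $\dim D=m+n+1$. By \Cref{subrepC}, $C\cong V(m-1,\beta+1)\otimes V(n-1,\beta')$, so $\dim C=mn$. Hence
\[
\dim D+\dim C=m+n+1+mn=(m+1)(n+1)=\dim\big(V(m,\beta)\otimes V(n,\beta')\big).
\]
It therefore suffices to show $D\cap C=0$. Here I assume $m,n\geq 1$. If $m=0$ or $n=0$, $C$ is the zero space by convention and the statement reduces to $D$ being everything, which again follows from the dimension count.

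The intersection $D\cap C$ is a subrepresentation of $D$. Since $D$ is isomorphic to an irreducible finite-dimensional line representation, which is irreducible by the remark after \Cref{irred_line_rep}, either $D\cap C=0$ or $D\subseteq C$. So the key step is to rule out $D\subseteq C$. I would do this with the spectrum of $\mu(x)$, which is preserved under isomorphism and under restriction to a subrepresentation.

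In $V(m+n,\beta+\beta')$, the operator $\mu(x)$ acts diagonally with eigenvalues $\beta+\beta'+k$ for $0\le k\le m+n$. In particular, $D$ contains a nonzero eigenvector of $\mu(x)$ with eigenvalue $\beta+\beta'+m+n$. On the other hand, by the explicit formulas \eqref{tensor_id:1} applied to $V(m-1,\beta+1)\otimes V(n-1,\beta')$, the operator $\mu(x)$ acts diagonally on $C$ with eigenvalues $\beta+1+\beta'+i+j$ for $0\le i\le m-1$ and $0\le j\le n-1$. All of these are at most $\beta+\beta'+m+n-1$ in the integer offset.

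Since $\KK$ has characteristic $0$, the scalars $\beta+\beta'+k$ for distinct integers $k$ are distinct. Hence $\beta+\beta'+m+n$ is not an eigenvalue of $\mu(x)$ on $C$. Therefore $D\not\subseteq C$, so $D\cap C=0$, and the dimension count gives $V(m,\beta)\otimes V(n,\beta')=D\oplus C$. The same argument works with the lowest eigenvalue $\beta+\beta'$, which occurs in $D$ but not in $C$.

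The only real obstacle is to make sure that the isomorphisms in \Cref{subrepD,subrepC} are isomorphisms of compatible representations, so that they intertwine $\mu(x)$ and transport the eigenvalue data. This holds by definition, so I expect the argument to go through without further computation.
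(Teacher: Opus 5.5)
Your proof is correct and follows the paper's argument: a dimension count together with the irreducibility of $D$ reduces everything to showing $D\not\subseteq C$. The paper settles that step by observing $w_{mn}=d_{m+n}\in D$ but $w_{mn}\notin C$, because every $c_{ij}$ involves only $w_{ab}$ with $a+b\le m+n-1$. This is your $\mu(x)$-eigenvalue observation phrased via the index sum, and in that form it does not need characteristic $0$ at this step.
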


We will now prove each lemma in turn.


\begin{proof}[Proof of \Cref{subrepD}]
	Take $d_i=\rho(x)^{m+n-i}w_{mn}$, for $i = 0, \ldots, m+n$, so that $d_\ell = \rho(x)d_{\ell+1}$ for $\ell < m+n$.
	Then, we claim that the subspace $D$ of $V(m,\beta)\otimes V(n,\beta')$ generated by the $d_i$ is a subrepresentation isomorphic to $V(m+n,\beta+\beta')$.
	
	The first thing to note is that $d_\ell$ is a linear combination of elements of the form $w_{ij}$ with $i+j=\ell$. This follows from \eqref{tensor_id:1}, by noting that $\rho(x)w_{ij}$ is a linear combination of elements whose indices sum to $i+j-1$ and arguing inductively.		
	
	This immediately shows that $\rho(y)d_\ell = (\ell-(m+n)-(\beta+\beta'))d_\ell$, $\mu(x)d_\ell = (\beta+\beta'+\ell) d_\ell$ and $\rho(x)d_0=0$. So it remains to check that $\mu(y)D \subseteq D$ to prove that $D$ is a subrepresentation. We will proceed by showing that $\mu(y)d_\ell = \lambda_\ell d_{\ell+1}$ for some $\lambda_\ell \in \KK$, which also implies that $D$ is an irreducible line representation of dimension $n+m+1$ and parameter $\beta+\beta'$, hence isomorphic to $V(m+n,\beta+\beta')$.
	
	We prove our claim that $\mu(y)d_\ell = \lambda_\ell d_{\ell+1}$ for some $\lambda_\ell \in \KK$ using downward induction. The base case is $\ell = m+n$, and it is clear that $\mu(y)d_{m+n}=0$.
	Using \eqref{rep:id4}, which is possible since we have already checked that $\mu(x)$ and $\rho(y)$ act diagonally, we have that
	\begin{align*}
		\mu(y)d_\ell &= \mu(y)\rho(x)d_{\ell+1} = \rho(x)\mu(y)d_{\ell+1}-\rho(y)d_{\ell+1}-\mu(x)d_{\ell+1} \\
		&= \rho(x)\lambda_{\ell+1}d_{\ell+2}-(\ell+1-(m+n)-(\beta+\beta'))d_{\ell+1}-(\ell+1+\beta+\beta')d_{\ell}\\
		& = (\lambda_{\ell+1}+m+n-2\ell-2)d_{\ell+1}.
	\end{align*}
 This proves the desired result and, moreover, gives a recurrence relation for the coefficients $\lambda_\ell$.
\end{proof}

\begin{rmk}
	One might use the recurrence relation we obtained, namely
	\[
	\lambda_\ell = \lambda_{\ell+1}+m+n-2\ell-2,
	\]
	to deduce an explicit expression for the $\lambda_\ell$. We arrive precisely at $\lambda_\ell = (\ell+1)(\ell-m-n)$, which is consistent with the expression for the action of $\mu(y)$ on the  irreducible line representation (see \Cref{canonical_Vn}).
\end{rmk}

\begin{proof}[Proof of \Cref{subrepC}]
	We let $C$ be the vector subspace of $V(m,\beta)\otimes V(n,\beta')$ generated by the following vectors
	\[
	c_{ij} = (j+1)w_{i,j+1} - (i+1)w_{i+1,j}, \quad 0 \leq i \leq m-1, \:  0 \leq j \leq n-1.
	\]	
	
	We start by showing that this generating set is linearly independent. By the definition of the $c_{ij}$, we only need to check the following subsets for linear independence
	\[
	C_\ell = \cb{c_{ij}\mid i+j = \ell},
	\]
	for $1\leq \ell \leq n+m-1$. Fix $\ell$ and suppose that the following holds for some coefficients $\zeta_i\in \KK$
	\[
	\sum_{i=0}^\ell \zeta_i c_{i,\ell-i} = 0.
	\]
	In terms of the $w_{ij}$ this is equivalent to 
	\begin{equation}\label{sumswij}
		\sum_{i=0}^\ell \zeta_i \Big( (\ell-i+1)w_{i,\ell-i+1} - (i+1)w_{i+1,\ell-i} \Big) = 0.
	\end{equation}
	The coefficient of $w_{i,\ell-i+1}$ in this expression is 
	\begin{equation}\label{zetacoefs}
		\zeta_i(\ell-i+1)-i\zeta_{i-1}.
	\end{equation}
	Since the sum \eqref{sumswij} evaluates to $0$, each of these coefficients is zero, and we readily show by induction that this implies that each $\zeta_i$ is also zero, for $0\leq i \leq \ell$, proving our claim.
	
%
	We will now proceed by proving that $C$ is a subrepresentation of $V(m,\beta)\otimes V(n,\beta')$ isomorphic to $V(m-1,\beta+1)\otimes V(n-1,\beta')$ by directly computing the actions $\rho(x)$, $\rho(y)$, $\mu(x)$, $\mu(y)$ on the basis elements $c_{ij}$. Whenever indices appear which are out of bounds, we assume the corresponding element is $0$ (namely, $i<0$, $i\geq m$, and $j\geq n$ when indexing elements of type $c_{ij}$ or $i<0$, $i> m$, and $j> n$ when indexing elements of type $w_{ij}$).
	\begin{align*}
		\rho(x)c_{ij} & = \rho(x)\nb{(j+1)w_{i,j+1} - (i+1)w_{i+1,j}} \\
		&= (j+1)w_{i-1,j+1} + (j+1)w_{i,j} - (i+1)w_{i,j} - (i+1)w_{i+1,j-1} \\
		&= (j+1)w_{i-1,j+1}-iw_{i,j} + j w_{i,j} - (i-1)w_{i+1,j-1} \\
		&= c_{i-1,j}+c_{i,j-1};
		\\
		\\
		\rho(y)c_{ij} & = \rho(y)\nb{(j+1)w_{i,j+1} - (i+1)w_{i+1,j}} \\
		&= (i+j+1-m-n-\beta-\beta')(j+1)w_{i,j+1} - (i+1+j-m-n-\beta-\beta')w_{i+1,j} \\
		&= (i+j-(m-1)-(n-1)-(\beta+1)-\beta')c_{ij};
		\\
		\\
		\mu(x)c_{ij} & = \mu(x)\nb{(j+1)w_{i,j+1} - (i+1)w_{i+1,j}}  \\
		&= (\beta+\beta'+i+j+1)(j+1)w_{i,j+1} - (\beta+\beta'+i+1+j)(i+1)w_{i+1,j} \\
		&= ((\beta+1)+\beta'+i+j)c_{ij};
		\\
		\\
		\mu(y)c_{ij} & = \mu(y)\nb{(j+1)w_{i,j+1} - (i+1)w_{i+1,j}} \\
		&= (j+1)\Big( (i-m)(i+1)w_{i+1,j+1} + (j+1-n)(j+2)w_{i,j+2}  \Big) \\
		& \qquad + (i+1)\Big((i+1-m)(i+2)w_{i+2,j} + (j-n)(j+1)w_{i+1,j+1} \Big) \\
		&= (j+1)(j+1-n)(j+2)w_{i,j+2} - (i+1)(i+1-m)(i+2)w_{i+2,j} \\
		& \qquad +(i+1)(j+1)(i-m+j-n)w_{i+1,j+1} \\
		&= (i-m+1)(i+1)\Big( (j+1)w_{i+1,j+1} - (i+2)w_{i+2,j}  \Big) \\
		& \qquad +  (j-n+1)(j+1)\Big( (j+2)w_{i,j+2} - (i+1)w_{i+1,j+1}  \Big) \\
		&= (i+1)(i-(m-1))c_{i+1,j} + (j+1)(j-(n-1))c_{i,j+1}.
	\end{align*}
	Looking at the coefficients in the above expressions and comparing with \eqref{tensor_id:1}, we see that $C$ is a subrepresentation isomorphic to $V(m-1,\beta+1)\otimes V(n-1,\beta')$.
\end{proof}

\begin{proof}[Proof of \Cref{DCdirectsum}]
	A dimension count shows that if $C$ and $D$ have trivial intersection, they satisfy $V(m,\beta)\otimes V(n,\beta') = D \oplus C$, as $\dim V(m,\beta)\otimes V(n,\beta') = (m+1)(n+1)=\dim D + \dim C = (m+n+1) + mn$.
	
	We are left to show that $C$ and $D$ have trivial intersection, and since $D$ is irreducible, it suffices to show that $D \not\subseteq C$. But from the definitions of $C$ and $D$, this is immediate, as $w_{mn} = d_{m+n} \in D$ but $w_{mn} \not\in C$.
\end{proof}

\section*{Acknowledgements}

The first, second and third authors are supported by the projects PID2024-155502NB-I00 granted by MICIU/AEI/10.13039/501100011033; and by Xunta de Galicia through the Competitive Reference Groups (GRC), ED431C 2023/31.

The third author is supported by an FCT -- Funda\c c\~ao para a Ci\^encia e a Tecnologia, I.P scholarship with reference number 2023.00796.BD.

The third and fourth authors are partially supported by CMUP -- Centro de Matem\'atica da Universidade do Porto, member of LASI, which is financed by national funds through FCT -- Funda\c c\~ao para a Ci\^encia e a Tecnologia, I.P., under the project with reference UID/00144/2025, doi: \url{https://doi.org/10.54499/UID/00144/2025}. 


\end{document}